\providecommand{\U}[1]{\protect\rule{.1in}{.1in}}
\newtheorem{theorem}{Theorem}[section]
\newtheorem{assumption}[theorem]{Assumption}
\newtheorem{corollary}[theorem]{Corollary}
\newtheorem{lemma}[theorem]{Lemma}
\newtheorem{remark}[theorem]{Remark}
\newenvironment{proof}[1][Proof]{\noindent\textbf{#1.} }{\ \rule{0.5em}{0.5em}}
\newcommand{\im}{\operatorname*{i}}
\newcommand{\Gw}[1]{\left\| #1 \right\|_{\Gamma,|\zeta|}}
\numberwithin{equation}{section}
\begin{document}

\title{Wave number-Explicit Analysis for Galerkin Discretizations of Lossy Helmholtz Problems}
\author{Jens M.~Melenk\thanks{(melenk@tuwien.ac.at), Institut f\"{u}r Analysis und
Scientific Computing, Technische Universit\"{a}t Wien, Wiedner Hauptstrasse
8-10, A-1040 Wien, Austria.}
\and Stefan A.~Sauter\thanks{(stas@math.uzh.ch), Institut f\"{u}r Mathematik,
Universit\"{a}t Z\"{u}rich, Winterthurerstr 190, CH-8057 Z\"{u}rich,
Switzerland}
\and C\'eline Torres \thanks{(celine.torres@math.uzh.ch), Institut f\"{u}r
Mathematik, Universit\"{a}t Z\"{u}rich, Winterthurerstr 190, CH-8057
Z\"{u}rich, Switzerland}}
\maketitle

\begin{abstract}
We present a stability and convergence theory for the lossy Helmholtz
equation and its Galerkin discretization. The boundary conditions are of Robin
type. All estimates are explicit with respect to the real and imaginary part
of the complex wave number $\zeta\in\mathbb{C}$, $\operatorname{Re}\zeta\geq0$,
$\left\vert \zeta\right\vert \geq1$. For the extreme cases $\zeta
\in\operatorname*{i}\mathbb{R}$ and $\zeta\in\mathbb{R}_{\geq0}$, the
estimates coincide with the existing estimates in the literature and 
exhibit a seamless
transition between these cases in the right complex half plane.

\end{abstract}

\section{Introduction}

For many problems in time-harmonic acoustic scattering, the Helmholtz equation
serves as a model problem, and its numerical discretization is a topic of vivid
research. For homogeneous, isotropic material the differential operator is
given by 
$$
\mathcal{L}_{\zeta}u:=-\Delta u+\zeta^{2}u,
$$
where 
$\zeta= \operatorname{Re} \zeta + \operatorname{i}\operatorname{Im} \zeta =: \nu - k \operatorname{i} $ 
with $\nu > 0$ and $k \in {\mathbb R}$ 
denotes
the \textit{wave number}. The solution is highly oscillatory if
$\left\vert \operatorname{Im}\zeta\right\vert \gg1$, which makes the
discretization challenging with respect to both, stability and accuracy. To
study this problem systematically the case of purely imaginary wave numbers
$\zeta=-\operatorname*{i}k$, $k\in\mathbb{R}$, has often been used in the
literature as a model problem for designing and analyzing numerical
methods. However, in many applications waves are damped, e.g., by
friction and viscoelastic effects in the material or loss via sound radiation
or flow of vibration energy out of the physical scatterer (see, e.g.,
\cite{vorlander2007auralization}).

Another important application is the approximation of the inverse Laplace
transform by contour quadrature where the Helmholtz operator has to be
discretized at many complex frequencies in the right complex half plane (see,
e.g., \cite{lopezsauter_contour}).

For the two extreme cases $\zeta=-\operatorname*{i}k$ and $\zeta=\nu$,
$k\in\mathbb{R}$, $\nu\in\mathbb{R}_{\geq0}$, a fairly complete theory for
standard Galerkin $hp$-finite elements is available
and the
error estimates are explicit with respect to the wave number $\zeta$, the mesh
width $h$ of the finite element mesh, and the polynomial degree $p$: 
a) For
$\zeta=-\operatorname*{i}k$ and large $\left\vert k\right\vert $ the problem
is highly indefinite and a \textquotedblleft resolution
condition\textquotedblright\ of the form%
\[
\frac{|k|h}{p}\leq C\quad\text{ together with }\quad p\geq C\log\left\vert
k\right\vert
\]
has to be imposed in order to ensure solvability of the Galerkin equations and
quasi-optimality 
(\cite{MelenkSauterMathComp,mm_stas_helm2,MPS13,esterhazy-melenk12}); 
b) for $\zeta=\nu>0$ and $\nu=O\left(  1\right)  $, the
problem is properly elliptic and C\'{e}a's lemma ensures well-posedness and
quasi-optimality without any resolution condition; c) for $\zeta=\nu\gg1$, the
solution exhibits boundary layers. Although the Galerkin discretization is
always well-posed in this last situation, special meshes should be used that are
adapted to the boundary layers 
(see, e.g., \cite{miller1996fitted, Shi91, MelenkHabil} and references there). 
In this paper, we will develop a unified theory for Galerkin discretizations
of $\mathcal{L}_{\zeta}$ with Robin boundary conditions that is applicable
for all $\zeta\in\mathbb{C}$, $\operatorname{Re}\zeta\geq0$, and $\left\vert
\zeta\right\vert \geq1$. All estimates are explicit in terms of
$\operatorname{Re}\zeta$ and $\operatorname{Im}\zeta$ and reproduce the limiting 
cases of purely real and imaginary $\zeta$. 
It is shown that, for the \textit{sectorial case}, i.e., the wave number
lies in a sectorial neighborhood of the real axis in the right complex half
plane, well-posedness and quasi-optimality is a consequence of coercivity
while for $\operatorname{Re}\zeta\rightarrow0$ the estimates tend continuously
to the purely imaginary case $\zeta=-\operatorname*{i}k$. We follow the
general theory developed in \cite{MelenkSauterMathComp, mm_stas_helm2}
and refine the estimates to be explicit with respect to the real and imaginary
part of the wave number.

The paper is structured as follows. In Sect.~\ref{SectSet} we introduce the
Helmholtz model problem with Robin boundary conditions and formulate some
geometric and algebraic assumptions on the data. Further, we define for the
wave number the (well-behaved) sectorial and the (more critical) non-sectorial region.

The estimate of the continuity constant for the sesquilinear form is derived
in Sect.~\ref{SectContConst}. Sect.~\ref{SectInfSup} is devoted to the
analysis of the inf-sup constant for the continuous sesquilinear form. If the
real part of the wave number is positive the estimate follows simply from the
coercivity of the sesquilinear form. However, this bound degenerates as 
$\operatorname{Re}\zeta\rightarrow0$. This can be remedied by a different proof:
first one uses suitable test functions to derive stability estimates for an
adjoint problem with $L^{2}$ right-hand sides and then by employing this
result for the estimate of the inf-sup constant in a vicinity of the imaginary axis.

The key role for the analysis of the Galerkin discretization is played by a
regular decomposition of the Helmholtz solution. In Sect.~\ref{SecRegDecomp},
we introduce a splitting of the Helmholtz solution into a part with (low)
$H^{2}$-regularity and wave number-\textit{independent} regularity constant and
an analytic part with a more critical wave number dependence. First, this is
derived for the full space solution by generalizing the results for purely
imaginary frequencies in \cite{MelenkSauterMathComp}. In the case of bounded
domains, we generalize the \textit{iteration argument} in 
\cite[Sect.~4]{mm_stas_helm2} to general complex frequencies. In addition, this requires
sharp estimates of frequency-depending lifting operators which we also present
in this section.

Sect.~\ref{sec:discretization} is devoted to the estimate of the discrete
inf-sup constant for the standard Galerkin discretization of the Helmholtz
equation. We will derive two type of estimates: one requires that the finite
dimensional space for the Galerkin discretization satisfies a certain
\textit{resolution condition} and allows for robust (as $\operatorname{Re}%
\zeta\rightarrow0)$ stability and quasi-optimal convergence estimates; the
other one avoids a resolution condition while the constants in the estimates
tend towards $\infty$ as $\operatorname{Re}\zeta\rightarrow0$ but stay robust
for the \textit{sectorial case}. Numerical examples in Sect.~\ref{sec:numerics}
illustrate the application of our analysis in the context 
of $hp$-FEM. 

\section{Setting\label{SectSet}}

We consider the Helmholtz problem
\begin{equation}%
\begin{split}
-\Delta u+\zeta^{2}u  &  =f\qquad\text{ in }\Omega,\\
\partial_{n}u+\zeta u  &  =g\qquad\text{ on }\Gamma:=\partial\Omega,
\end{split}
\label{eq:strong-helmholtz-robin}%
\end{equation}
for $f\in L^{2}(\Omega)$ and $g\in L^{2}(\Gamma)$. We assume that the
wave number (frequency) $\zeta$ satisfies\footnote{The condition $|\zeta|\geq1$
can be replaced by $\left\vert \zeta\right\vert \geq\rho_{0}$ for any
$\rho_{0}>0$. However, the constants in our estimates, possibly, deteriorate
as $\rho_{0}\rightarrow0$.}%
\begin{equation}
\zeta\in\mathbb{C}_{\geq0}^{\circ}:=\left\{  \zeta\in\mathbb{C}_{\geq0}%
\mid|\zeta|\geq1\right\}  ,
\end{equation}
where, for $\rho\in\mathbb{R}$,
\[
\mathbb{C}_{>\rho}:=\left\{  \xi\in\mathbb{C}\mid\operatorname{Re}\xi
>\rho\right\}  \quad\text{and\quad}\mathbb{C}_{\geq\rho}:=\left\{  \xi
\in\mathbb{C}\mid\operatorname{Re}\xi\geq\rho\right\}  .
\]
Note that the choice $\zeta=-\operatorname*{i}k$ leads to the standard
Helmholtz case. The frequency domain $\mathbb{C}_{\geq0}^{\circ}$ is split
into the \textit{sectorial} and \textit{non-sectorial} cases%
\begin{align*}
S_{\beta}  &  :=\{\xi\in\mathbb{C}_{\geq0}^{\circ}:\left\vert
\operatorname{Im}\xi\right\vert <\beta\operatorname{Re}\xi\},\\
S_{\beta}^{c}  &  :=\{\xi\in\mathbb{C}_{\geq0}^{\circ}:\left\vert
\operatorname{Im}\xi\right\vert \geq\beta\operatorname{Re}\xi\}
\end{align*}
for some $\beta>0$. Our focus is on the derivation of stability and error
estimates that are explicit in the real and imaginary part of $\zeta$ but
less on the development of a theory with minimal assumptions on the geometry
of the domain. In this light we impose the following simplifying assumption.

\begin{assumption}
\label{ASmoothDomain}$\Omega\subset\mathbb{R}^{3}$ is a bounded domain with
analytic boundary that is star-shaped with respect to a ball.
\end{assumption}

We note that our results can be extended to convex polygonal domains in a
straightforward way following the arguments in \cite{mm_stas_helm2}.

Let $L^{2}\left(  \Omega\right)  $ denote the usual Lebesgue space with scalar
product denoted by $\left(  \cdot,\cdot\right)  $ (complex conjugation is on
the second argument) and norm $\|\cdot\|_{L^2(\Omega)}:= \left\Vert \cdot\right\Vert :=\left(
\cdot,\cdot\right)  ^{1/2}$. Let $V=H^{1}\left(  \Omega\right)  $ denote the
usual Sobolev space and let $\gamma_{0}:H^{1}\left(  \Omega\right)
\rightarrow H^{1/2}\left(  \Gamma\right)  $ be the standard trace operator. We
introduce the sesquilinear forms
\[
a_{0,\zeta}\left(  u,v\right)  :=\left(  \nabla u,\nabla v\right)  +\left(
\zeta^{2}u,v\right)  \qquad\forall u,v\in V,
\]
and
\[
b_{\zeta}\left(  \gamma_{0}u,\gamma_{0}v\right)  :=\left(  \zeta\gamma
_{0}u,\gamma_{0}v\right)  _{\Gamma}\qquad\forall u,v\in V,
\]
where $\left(  \cdot,\cdot\right)  _{\Gamma}$ is the $L^{2}\left(
\Gamma\right)  $ scalar product.

The weak formulation of the Helmholtz problem with Robin boundary conditions
\eqref{eq:strong-helmholtz-robin} is given as follows: For $F=(f,\cdot
)+(g,\gamma_0 \cdot)_{\Gamma}\in V^{\prime}$, we seek $u\in V$ such that
\begin{equation}
a_{\zeta}\left(  u,v\right)  :=a_{0,\zeta}\left(  u,v\right)  +b_{\zeta
}\left(  \gamma_{0}u,\gamma_{0}v\right)  =F\left(  v\right)  \quad\forall v\in
V. \label{varformrobin}%
\end{equation}
In the following, we will omit explicitly writing the trace operator
$\gamma_0$ when it is clear that it is implied.

\section{The Continuity Constant\label{SectContConst}}

In this section, we will estimate the continuity constant of the sesquilinear
form $a_{\zeta}\left(  \cdot,\cdot\right)  $. 
We equip the Sobolev space $V$
with the indexed norm $\left\Vert \cdot\right\Vert _{|\zeta|}$, where, for
$\rho>0$, we set%
\begin{equation}
\label{eq:norm-rho}
\left\Vert u\right\Vert _{\rho,\Omega}=\left\Vert u\right\Vert _{\rho
}:=\left(  \left\Vert \nabla u\right\Vert ^{2}+\rho^{2}\left\Vert u\right\Vert
^{2}\right)  ^{1/2}.
\end{equation}
More generally, for measurable subsets $T \subset \Omega$ we write 
\begin{equation*}
\|u\|_{\rho,T} := 
\left( \|\nabla u\|^2_{L^2(T)} + \rho^2 \|u\|^2_{L^2(T)}\right)^{1/2}
\end{equation*}
The $L^{2}$-norm on $\Gamma$ is denoted by $\Vert\cdot\Vert_{\Gamma}$. 
On $H^{1/2}(\Gamma)$ we introduce the weighted norm 
\begin{equation}
\label{eq:norm-rho-bdy}
\left\| g \right\|_{\Gamma,\rho} := \left(\|g\|^2_{H^{1/2}(\Gamma)}+ 
\rho\|g\|^2_{\Gamma}\right)^{1/2},
\end{equation}
for \(\rho>0\).
\begin{theorem}
The sesquilinear form $a_{\zeta}$ is continuous and 
\begin{equation}
\left\vert a_{\zeta}\left(  u,v\right)  \right\vert \leq\left(  1+C_{b}%
\right)  \left\Vert u\right\Vert _{|\zeta|}\left\Vert v\right\Vert _{|\zeta
|}\quad\forall u,v\in H^{1}\left(  \Omega\right)  \label{eq:contRobin}%
\end{equation}
with $C_{b}$ independent of $\zeta\in\mathbb{C}_{\geq0}$.
\end{theorem}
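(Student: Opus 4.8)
The plan is to bound each of the three terms in $a_\zeta(u,v) = (\nabla u,\nabla v) + (\zeta^2 u,v) + (\zeta\gamma_0 u,\gamma_0 v)_\Gamma$ separately by the Cauchy--Schwarz inequality and then collect the pieces into the $\|\cdot\|_{|\zeta|}$-norm. For the volume terms, $|(\nabla u,\nabla v)| \le \|\nabla u\|\,\|\nabla v\|$ and $|(\zeta^2 u,v)| = |\zeta|^2\,|(u,v)| \le |\zeta|^2\|u\|\,\|v\|$; adding these and using $(ac+bd)\le(a^2+b^2)^{1/2}(c^2+d^2)^{1/2}$ with $a=\|\nabla u\|$, $b=|\zeta|\|u\|$, etc., gives $|a_{0,\zeta}(u,v)| \le \|u\|_{|\zeta|}\|v\|_{|\zeta|}$, which accounts for the ``$1$'' in the constant $1+C_b$.

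Next I would handle the boundary term $b_\zeta(\gamma_0 u,\gamma_0 v) = \zeta(\gamma_0 u,\gamma_0 v)_\Gamma$. Here $|b_\zeta(\gamma_0 u,\gamma_0 v)| \le |\zeta|\,\|\gamma_0 u\|_\Gamma\,\|\gamma_0 v\|_\Gamma$. The task is to dominate $|\zeta|\,\|\gamma_0 u\|_\Gamma^2$ by a constant times $\|u\|_{|\zeta|}^2$. This is where a trace/multiplicative trace inequality enters: one has $\|\gamma_0 u\|_\Gamma^2 \le C_{\mathrm{tr}}\,\|u\|_{H^1(\Omega)}\,\|u\|_{L^2(\Omega)}$ (a standard consequence of the trace theorem combined with interpolation, valid under Assumption~\ref{ASmoothDomain}), or more simply $\|\gamma_0 u\|_\Gamma^2 \le C_{\mathrm{tr}}(\delta\|\nabla u\|^2 + (\delta^{-1}+1)\|u\|^2)$ for any $\delta>0$. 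Choosing $\delta$ proportional to $1/|\zeta|$ turns the right-hand side into $C_{\mathrm{tr}}'\,|\zeta|^{-1}\bigl(\|\nabla u\|^2 + |\zeta|^2\|u\|^2\bigr) = C_{\mathrm{tr}}'\,|\zeta|^{-1}\|u\|_{|\zeta|}^2$ (using $|\zeta|\ge 1$ to absorb the lower-order term), so that $|\zeta|\,\|\gamma_0 u\|_\Gamma^2 \le C_b\,\|u\|_{|\zeta|}^2$ with $C_b$ depending only on the domain. Applying Cauchy--Schwarz once more in the form $|\zeta|\|\gamma_0 u\|_\Gamma\|\gamma_0 v\|_\Gamma \le (|\zeta|\|\gamma_0 u\|_\Gamma^2)^{1/2}(|\zeta|\|\gamma_0 v\|_\Gamma^2)^{1/2} \le C_b\|u\|_{|\zeta|}\|v\|_{|\zeta|}$ finishes the boundary contribution.

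Combining the two bounds by the triangle inequality yields $|a_\zeta(u,v)| \le (1+C_b)\|u\|_{|\zeta|}\|v\|_{|\zeta|}$, as claimed, with $C_b$ depending only on the trace constant of $\Omega$ and hence independent of $\zeta$. The only mild subtlety---the ``main obstacle,'' such as it is---is choosing the scaling $\delta \sim |\zeta|^{-1}$ correctly in the trace inequality so that the $|\zeta|$-weight in front of the boundary term is exactly absorbed and the constant comes out $\zeta$-independent; the hypothesis $|\zeta|\ge 1$ is used precisely to control the zeroth-order leftover. Everything else is routine Cauchy--Schwarz. I would also remark that the same argument shows $b_\zeta$ alone is continuous with constant $C_b$ on $(H^{1/2}(\Gamma),\|\cdot\|_{\Gamma,|\zeta|})$, which is the natural companion estimate for the norm \eqref{eq:norm-rho-bdy}.
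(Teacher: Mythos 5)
Your proposal is correct and follows essentially the same route as the paper: Cauchy--Schwarz for the volume terms (giving the ``$1$''), and the multiplicative trace inequality $\|\gamma_0 u\|_\Gamma^2 \lesssim \|u\|\,\|u\|_{H^1(\Omega)}$ combined with $|\zeta|\ge 1$ to absorb the boundary term into $C_b\|u\|_{|\zeta|}\|v\|_{|\zeta|}$ --- your $\delta\sim|\zeta|^{-1}$ Young-inequality variant is just a rephrasing of that same step.
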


%

\proof
The continuity estimate for the sesquilinear form $b_{\zeta}\left(
\cdot,\cdot\right)  $ is a simple consequence of the multiplicative trace
inequality (see \cite[p.41, last formula]{Grisvard85})%
\begin{equation}
\left\Vert \gamma_{0}u\right\Vert _{\Gamma}\leq C_{\operatorname*{trace}%
}\left\Vert u\right\Vert ^{1/2}\left\Vert u\right\Vert _{H^{1}(\Omega)}^{1/2}.
\label{eq:mtrace}%
\end{equation}
Hence%
\begin{equation}
\sqrt{|\zeta|}\left\Vert \gamma_{0}u\right\Vert _{L^{2}\left(  \Gamma\right)
}\leq C_{\mathrm{trace}}\left(  \left\vert \zeta\right\vert \left\Vert
u\right\Vert \right)  ^{1/2}\left\Vert u\right\Vert _{H^{1}(\Omega)}^{1/2}\leq
C\left\Vert u\right\Vert _{\left\vert \zeta\right\vert }, 
\label{multtraceinequ}%
\end{equation}
which implies the continuity of $b_{\zeta}\left(  \cdot,\cdot\right)  $
\begin{equation}
\left\vert b_{\zeta}\left(  \gamma_{0}u,\gamma_{0}v\right)  \right\vert \leq
C_{b}\left\Vert u\right\Vert _{\left\vert \zeta\right\vert }\left\Vert
v\right\Vert _{\left\vert \zeta\right\vert }\qquad\forall u,v\in H^{1}\left(
\Omega\right)  \label{defCrobin}%
\end{equation}
for a constant $C_{b}$ independent of $\zeta\in\mathbb{C}_{\geq0}^{\circ}$ and
$u$, $v$.
\endproof

\section{The Inf-Sup Constant of $a_{\zeta}\left(  \cdot,\cdot\right)$}\label{SectInfSup}

Our goal in this section is to estimate the inf-sup \textit{constant}
\begin{equation}
\label{eq:inf-sup}
\gamma_{\zeta}:=\inf_{u\in V}\sup_{v\in V}\frac{\left\vert a_{\zeta}\left(
u,v\right)  \right\vert }{\left\Vert u\right\Vert _{\left\vert \zeta
\right\vert }\left\Vert v\right\Vert _{|\zeta|}},%
\end{equation}
which implies well-posedness of (\ref{varformrobin}). This involves two
different theoretical techniques: In Sect.~\ref{subsec:infsupsectorial} we
consider the case $\operatorname{Re}\zeta>0$ and obtain estimates from the
coercivity of the sesquilinear form. These estimates give stable bounds for the
sectorial case but deteriorate as $\operatorname{Re}\zeta\rightarrow0$ in the
non-sectorial case. In Sect.~\ref{subsec:infsuphelmh} we employ the
sesquilinear form with a suitably selected test function and obtain sharp
estimates also for the non-sectorial case.

\subsection{The Inf-Sup Constant for $\operatorname{Re}\zeta>0$}

\label{subsec:infsupsectorial}

The estimate of the inf-sup constant in the following Lemma~\ref{Lem:infsup1} 
is a direct consequence of the technique used in \cite{bambduong}.

\begin{lemma}
\label{Lem:infsup1}Let $\Omega\subset\mathbb{R}^{3}$ be a bounded Lipschitz
domain and let $\zeta\in\mathbb{C}_{>0}^{\circ}$. Then the inf-sup constant 
$\gamma_{\zeta}$ of (\ref{eq:inf-sup}) 
for the sesquilinear form $a_{\zeta}\left(  \cdot,\cdot\right)  $ (cf.\ 
(\ref{varformrobin})) satisfies%
\begin{equation}
\gamma_{\zeta}\geq\frac{\operatorname{Re}\zeta}{\left\vert \zeta\right\vert }.
\label{infsupfirst}%
\end{equation}
For every $F\in V^{\prime}$, problem (\ref{varformrobin}) has a
unique solution.  
In particular if there are $f\in L^{2}\left(  \Omega\right)  $, $g\in
L^{2}\left(  \Gamma\right)  $ such that $F\left(  v\right)  =\left(
f,v\right)  +\left(  g,v\right)  _{\Gamma}$, then the solution $u$ satisfies 
\begin{equation}
\left\Vert u\right\Vert _{|\zeta|}\leq\frac{1}{\operatorname{Re}\zeta}\left(
\left\Vert f\right\Vert +C\sqrt{\left\vert \zeta\right\vert }\left\Vert
g\right\Vert _{\Gamma}\right)  . \label{solopRobin}%
\end{equation}

\end{lemma}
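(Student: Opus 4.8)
The plan is to exploit the Bamberger--HaDuong "rotation" trick: although $a_{\zeta}(\cdot,\cdot)$ is not coercive on $V$ with the norm $\|\cdot\|_{|\zeta|}$, it becomes coercive after multiplication by the unimodular factor $\overline{\zeta}/|\zeta|$. Concretely, I would first compute, using $\overline{\zeta}\zeta=|\zeta|^{2}$ and $\overline{\zeta}\zeta^{2}=|\zeta|^{2}\zeta$,
\[
\overline{\zeta}\,a_{\zeta}(u,u)=\overline{\zeta}\,\|\nabla u\|^{2}+|\zeta|^{2}\zeta\,\|u\|^{2}+|\zeta|^{2}\,\|\gamma_{0}u\|_{\Gamma}^{2},
\]
and then take real parts to obtain
\[
\operatorname{Re}\bigl(\overline{\zeta}\,a_{\zeta}(u,u)\bigr)=\operatorname{Re}\zeta\,\bigl(\|\nabla u\|^{2}+|\zeta|^{2}\|u\|^{2}\bigr)+|\zeta|^{2}\|\gamma_{0}u\|_{\Gamma}^{2}\ \geq\ \operatorname{Re}\zeta\,\|u\|_{|\zeta|}^{2},
\]
where the (nonnegative) boundary term is simply discarded; here $\operatorname{Re}\zeta>0$ is used to keep that term with the right sign. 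Since $|\zeta|\,|a_{\zeta}(u,u)|=|\overline{\zeta}\,a_{\zeta}(u,u)|\geq\operatorname{Re}\bigl(\overline{\zeta}\,a_{\zeta}(u,u)\bigr)$, this yields the pointwise lower bound $|a_{\zeta}(u,u)|\geq(\operatorname{Re}\zeta/|\zeta|)\,\|u\|_{|\zeta|}^{2}$ for every $u\in V$.

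From this, the inf-sup estimate (\ref{infsupfirst}) is immediate: in the supremum defining $\gamma_{\zeta}$ in (\ref{eq:inf-sup}) I take the particular test function $v=u$, so that
\[
\gamma_{\zeta}=\inf_{u\in V}\sup_{v\in V}\frac{|a_{\zeta}(u,v)|}{\|u\|_{|\zeta|}\|v\|_{|\zeta|}}\ \geq\ \inf_{u\in V}\frac{|a_{\zeta}(u,u)|}{\|u\|_{|\zeta|}^{2}}\ \geq\ \frac{\operatorname{Re}\zeta}{|\zeta|}.
\]
For well-posedness I would introduce the rescaled sesquilinear form $\tilde a(u,v):=(\overline{\zeta}/|\zeta|)\,a_{\zeta}(u,v)$, which is still sesquilinear, is continuous with the constant from (\ref{eq:contRobin}), and is now genuinely coercive on $V$ by the estimate above; the Lax--Milgram lemma then provides, for each $F\in V'$, a unique $u\in V$ solving $\tilde a(u,v)=(\overline{\zeta}/|\zeta|)F(v)$ for all $v\in V$, which is equivalent to (\ref{varformrobin}). (Alternatively, one uses $\overline{a_{\zeta}(v,u)}=a_{\overline{\zeta}}(u,v)$ together with $\overline{\zeta}\in\mathbb{C}_{>0}^{\circ}$ to control the adjoint inf-sup constant as well, and invokes the Banach--Ne\v{c}as--Babu\v{s}ka theorem.)

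For the data estimate (\ref{solopRobin}), assuming $F(v)=(f,v)+(g,v)_{\Gamma}$, I would test the coercivity inequality against the solution itself:
\[
\frac{\operatorname{Re}\zeta}{|\zeta|}\,\|u\|_{|\zeta|}^{2}\leq|a_{\zeta}(u,u)|=|F(u)|\leq\|f\|\,\|u\|+\|g\|_{\Gamma}\,\|\gamma_{0}u\|_{\Gamma}.
\]
Using $\|u\|\leq|\zeta|^{-1}\|u\|_{|\zeta|}$ and the multiplicative trace inequality already in the form (\ref{multtraceinequ}), i.e.\ $\|\gamma_{0}u\|_{\Gamma}\leq C|\zeta|^{-1/2}\|u\|_{|\zeta|}$ (this is where $|\zeta|\geq1$ enters), the right-hand side is bounded by $\bigl(|\zeta|^{-1}\|f\|+C|\zeta|^{-1/2}\|g\|_{\Gamma}\bigr)\|u\|_{|\zeta|}$; dividing by $\|u\|_{|\zeta|}$, multiplying by $|\zeta|/\operatorname{Re}\zeta$, and writing $|\zeta|^{-1/2}=|\zeta|^{1/2}/|\zeta|$ gives exactly (\ref{solopRobin}).

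I do not expect a genuine obstacle here; the argument is elementary. The only points requiring care are the sign of the boundary contribution (which forces $\operatorname{Re}\zeta>0$ rather than merely $\operatorname{Re}\zeta\geq0$) and the bookkeeping of the $|\zeta|$-weights so that the constants come out precisely as stated. The degeneration of the bound as $\operatorname{Re}\zeta\to0$ is intrinsic to this coercivity-based proof and is exactly what motivates the alternative, test-function-based analysis of Section~\ref{subsec:infsuphelmh}.
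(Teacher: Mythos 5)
Your argument is correct and is essentially the paper's own proof: multiplying $a_{\zeta}(u,u)$ by $\overline{\zeta}/|\zeta|$ and taking real parts is exactly the paper's choice of test function $v=\frac{\zeta}{|\zeta|}u$ (the Bamberger--Ha Duong rotation), and the data estimate follows by the same use of the multiplicative trace inequality \eqref{multtraceinequ}. The only cosmetic difference is that you conclude well-posedness via Lax--Milgram for the rescaled coercive form instead of citing the inf-sup framework, which is an equivalent and equally valid way to finish.
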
%

\proof
We follow the idea of the proof in \cite{bambduong}. We choose $v=\frac{\zeta
}{|\zeta|}u$. For the sesquilinear form with Robin boundary conditions we have%
\[
\operatorname{Re}a_{\zeta}\left(  u,\frac{\zeta}{|\zeta|}u\right)
=\frac{\operatorname{Re}\zeta}{|\zeta|}\left\Vert u\right\Vert _{|\zeta|}%
^{2}+\left\vert \zeta\right\vert \left\Vert u\right\Vert _{\Gamma}^{2}%
\geq\frac{\operatorname{Re}\zeta}{|\zeta|}\left\Vert u\right\Vert _{\left\vert
\zeta\right\vert }^{2}.
\]
The positivity of the inf-sup constant $\gamma_{\zeta}$ implies unique
solvability (see, e.g., \cite[Thm.~{2.1.44}]{SauterSchwab2010}; 
the above argument can be used to show \cite[(2.34b)]{SauterSchwab2010}).
We obtain%
\[
\left\Vert u\right\Vert _{|\zeta|}\leq\frac{\left\vert \zeta\right\vert
}{\operatorname{Re}\zeta}\sup_{v\in H^{1}\left(  \Omega\right)  \backslash
\left\{  0\right\}  }\frac{\left\vert F\left(  v\right)  \right\vert
}{\left\Vert v\right\Vert _{\left\vert \zeta\right\vert }}\leq\frac{|\zeta
|}{\operatorname{Re}\zeta}\left(  \frac{\left\Vert f\right\Vert }{|\zeta
|}+\left\Vert g\right\Vert _{L^{2}\left(  \Gamma\right)  }\sup_{v\in
H^{1}\left(  \Omega\right)  \backslash\left\{  0\right\}  }\frac{\left\Vert
v\right\Vert _{\Gamma}}{\left\Vert v\right\Vert _{|\zeta|}}\right)  .
\]
A multiplicative trace inequality in the form of (\ref{multtraceinequ}) leads
to (\ref{solopRobin}).%
\endproof

\begin{lemma}
\label{LemWellPosRobin}Let $\Omega\subset\mathbb{R}^{3}$ be a smooth domain 
that is star-shaped with respect to a ball or let $\Omega$ be 
a convex polyhedron. Let 
the functional $F\in V^{\prime}$ be of the form 
$F\left(
v\right)  =\left(  f,v\right)  +\left(  g,v\right)  _{\Gamma}$ 
with $f\in L^{2}\left(
\Omega\right)  $ and $g\in L^{2}\left(  \Gamma\right)  $. 
Then, problem
(\ref{varformrobin}) has a unique solution and satisfies
\begin{equation}
\Vert u\Vert_{|\zeta|}\leq C_{S}\left(  \frac{1}{1+\operatorname*{Re}(\zeta)}
\left\Vert f\right\Vert +\frac{1}{\sqrt{1+\operatorname*{Re}(\zeta)}%
}\left\Vert g\right\Vert _{\Gamma}\right)  \label{aprioriest}%
\end{equation}
for some $C_{S}$ independent of $\zeta\in\mathbb{C}_{\geq0}^{\circ}$.
\end{lemma}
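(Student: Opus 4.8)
The goal is an a priori bound for the Robin–Helmholtz problem that is uniform as $\operatorname{Re}\zeta\to0$, i.e.\ that does not blow up like $1/\operatorname{Re}\zeta$ (which is all that Lemma~\ref{Lem:infsup1} gives). The natural strategy is to combine two regimes. For $\zeta$ in the sectorial region $S_\beta$ (with $\beta$ fixed, say $\beta=1$) the coercivity estimate \eqref{infsupfirst} already yields $\gamma_\zeta\gtrsim\operatorname{Re}\zeta/|\zeta|\gtrsim1/\sqrt{1+\beta^2}$, a constant independent of $\zeta$, so \eqref{solopRobin} immediately gives \eqref{aprioriest} there (note $\operatorname{Re}\zeta\sim|\zeta|$ in the sector). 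The real work is the non-sectorial region $S_\beta^c$, which includes a neighborhood of the imaginary axis; there one must fall back on the Helmholtz-type argument. The plan is therefore: (i) treat $S_\beta$ by coercivity; (ii) treat $S_\beta^c$ by a test-function / Rellich-identity argument adapted from \cite{MelenkSauterMathComp, mm_stas_helm2} to complex $\zeta$, using the star-shapedness of $\Omega$; (iii) check that the two bounds match on the overlap and combine them into the single estimate \eqref{aprioriest}.

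**The non-sectorial estimate.** Here I would proceed as in the classical Helmholtz stability proof. First rewrite the PDE in strong form: since $f\in L^2(\Omega)$ and $g\in L^2(\Gamma)$, the solution $u$ of \eqref{varformrobin} lies in $H^2(\Omega)$ by elliptic regularity (using Assumption~\ref{ASmoothDomain} or convexity), and satisfies $-\Delta u+\zeta^2u=f$ in $\Omega$, $\partial_n u+\zeta u=g$ on $\Gamma$. The basic $H^1$ bound comes from testing with $u$ itself: taking real and imaginary parts of $a_\zeta(u,u)=F(u)$ one controls $\|\nabla u\|^2+(\operatorname{Re}\zeta^2)\|u\|^2+(\operatorname{Re}\zeta)\|u\|_\Gamma^2$ and the imaginary part controls $(\operatorname{Im}\zeta^2)\|u\|^2+(\operatorname{Im}\zeta)\|u\|_\Gamma^2$; in $S_\beta^c$, $|\operatorname{Im}\zeta|\geq\beta\operatorname{Re}\zeta$ so $|\operatorname{Im}\zeta|\gtrsim|\zeta|$, and this gives $\||\zeta|u\|$ and $\sqrt{|\zeta|}\|u\|_\Gamma$ in terms of the data — \emph{but} with a loss: one only controls $\||\zeta|^{1/2}u\|$ type quantities directly from duality pairings. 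To upgrade $\|\nabla u\|$ to the full $\|u\|_{|\zeta|}$ norm with the right powers one uses the Rellich–Morawetz multiplier $v=x\cdot\nabla u$ (or $(x-x_0)\cdot\nabla u$ for the star-shape center $x_0$), plus possibly a $\delta u$ correction. Integrating $\operatorname{Re}\int(-\Delta u+\zeta^2u)\overline{(x\cdot\nabla u)}$ by parts, the star-shapedness makes the boundary term $\int_\Gamma (x\cdot n)|\nabla_\Gamma u|^2$ have a favorable sign, and one extracts a bound of the form $\|\nabla u\|^2+|\zeta|^2\|u\|^2\lesssim (\operatorname{data})^2$ with the scaling that produces exactly the $1/(1+\operatorname{Re}\zeta)$ and $1/\sqrt{1+\operatorname{Re}\zeta}$ weights — here the point is that in $S_\beta^c$ we have $1+\operatorname{Re}\zeta\sim1$, so \eqref{aprioriest} is really the bound $\|u\|_{|\zeta|}\lesssim\|f\|+\|g\|_\Gamma$, with the constant controlled uniformly because the imaginary part is comparable to $|\zeta|$ throughout $S_\beta^c$.

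**Combining the regimes.** On $S_\beta$, \eqref{solopRobin} reads $\|u\|_{|\zeta|}\le \frac{1}{\operatorname{Re}\zeta}(\|f\|+C\sqrt{|\zeta|}\|g\|_\Gamma)$, and since $\operatorname{Re}\zeta\geq|\zeta|/\sqrt{1+\beta^2}\geq1/\sqrt{1+\beta^2}$ and also $\operatorname{Re}\zeta\sim|\zeta|\sim1+\operatorname{Re}\zeta$ there, this is $\lesssim \frac{1}{1+\operatorname{Re}\zeta}\|f\|+\frac{1}{\sqrt{1+\operatorname{Re}\zeta}}\|g\|_\Gamma$ as required. On $S_\beta^c$ the Rellich argument gives the same with $1+\operatorname{Re}\zeta\sim1$. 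Taking $C_S$ the max of the two constants completes the proof; uniqueness follows from Lemma~\ref{Lem:infsup1} when $\operatorname{Re}\zeta>0$ and from the a priori bound (which forces $u=0$ when $f=g=0$) together with Fredholm theory when $\operatorname{Re}\zeta=0$.

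**Main obstacle.** The delicate step is the Rellich/Morawetz computation for \emph{complex} $\zeta$ with Robin boundary conditions: one must keep careful track of which combination of $\operatorname{Re}\zeta$ and $\operatorname{Im}\zeta$ multiplies each term, absorb the boundary contributions from the Robin condition (the $\zeta u$ term on $\Gamma$ interacts with $x\cdot\nabla u$ and with the multiplicative trace inequality \eqref{eq:mtrace}), and ensure all absorptions are quantitative and $\zeta$-uniform on $S_\beta^c$. The bookkeeping that turns the raw identity into precisely the weights $1/(1+\operatorname{Re}\zeta)$, $1/\sqrt{1+\operatorname{Re}\zeta}$ — so that the estimate interpolates seamlessly with the sectorial bound — is where the care is needed; everything else (elliptic regularity, the energy identity, trace inequalities) is standard.
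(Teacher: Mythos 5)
Your outline matches the paper's strategy in two of its three regimes (coercivity on $S_\beta$, Rellich/Morawetz multipliers near the imaginary axis), but it contains a genuine gap in the non-sectorial case. You assert that on $S_\beta^c$ one has $1+\operatorname{Re}\zeta\sim1$, so that \eqref{aprioriest} reduces there to $\Vert u\Vert_{|\zeta|}\lesssim\Vert f\Vert+\Vert g\Vert_\Gamma$. This is false: $S_\beta^c$ only requires $|\operatorname{Im}\zeta|\geq\beta\operatorname{Re}\zeta$, so for fixed $\beta$ it contains wave numbers with arbitrarily large real part (e.g.\ $\zeta=\nu-\operatorname{i}\beta\nu$ with $\nu\to\infty$). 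For such $\zeta$ the lemma still demands the decay factors $1/(1+\operatorname{Re}\zeta)$ and $1/\sqrt{1+\operatorname{Re}\zeta}$ — this gain over Lemma~\ref{Lem:infsup1} (whose bound $\tfrac{1}{\operatorname{Re}\zeta}\bigl(\Vert f\Vert+C\sqrt{|\zeta|}\Vert g\Vert_\Gamma\bigr)$ has the wrong $g$-weight when $|\zeta|\gg\operatorname{Re}\zeta$) is precisely the content of the statement, and your argument, as set up, discards it. Moreover, running the multiplier identity in this regime is not harmless: the cross term $2\nu k\Vert u\Vert\,\Vert\langle x,\nabla u\rangle\Vert$ grows with $\nu k$ and cannot simply be absorbed when $\operatorname{Re}\zeta$ is large; relatedly, fixing $\beta=1$ at the outset forfeits the freedom the absorption needs ($\beta$ must be chosen sufficiently large at the end).

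The paper closes exactly this gap by subdividing Case b ($\zeta\in S_\beta^c$): for $\nu=\operatorname{Re}\zeta\geq1$ it tests with $v=\frac{\zeta}{|\zeta|}u$, takes the real part (which produces $\frac{\nu}{|\zeta|}\Vert\nabla u\Vert^2+\nu|\zeta|\Vert u\Vert^2+|\zeta|\Vert u\Vert_\Gamma^2$ on the left, crucially retaining the boundary term), and applies Young's inequality so as to obtain $\Vert\nabla u\Vert^2+\tfrac{|\zeta|^2}{2}\Vert u\Vert^2\lesssim\nu^{-2}\Vert f\Vert^2+\nu^{-1}\Vert g\Vert_\Gamma^2$, i.e.\ exactly the weights in \eqref{aprioriest}; only for $\nu<1$ (where indeed $1+\nu\sim1$) does it invoke the three test functions $u$, $-\operatorname{sign}(k)u$, and $\langle x,\nabla u\rangle$ together with star-shapedness, choosing $\beta$ large to absorb the $\nu k$-cross term. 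Your sectorial case and your treatment of uniqueness (coercivity for $\operatorname{Re}\zeta>0$; a priori bound plus Fredholm, respectively the cited result, for $\operatorname{Re}\zeta=0$) are fine, but without an argument covering $\zeta\in S_\beta^c$ with large $\operatorname{Re}\zeta$ the proof of \eqref{aprioriest} is incomplete.
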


\begin{remark}
In \cite{GrahamSpenceZou2018}, a stability estimate is proved that is related
to \eqref{aprioriest} if $\operatorname*{Re}\zeta$ is sufficiently small.
For $\zeta\in S_{\beta}^{c}$, the estimate \eqref{aprioriest} is
non-degenerate for $\operatorname*{Re}\zeta\rightarrow0$ in contrast to
(\ref{infsupfirst}) and the result in \cite{GrahamSpenceZou2018}.
\end{remark}

%

\proof
Without loss of generality, we assume that $\Omega$ is star-shaped with respect
to the origin. We will fix a parameter $\beta > 1$ sufficiently large at the 
end of the proof. We distinguish between two cases.

\textbf{Case a:} $\zeta\in S_{\beta}$\textbf{.}
The condition $|\zeta|\geq1$ leads to 
\begin{align}
\operatorname{Re}\zeta>\left(
1+\beta^{2}\right)  ^{-1/2} |\zeta|\geq \left(
1+\beta^{2}\right)  ^{-1/2} 
\end{align}
and Lemma~\ref{Lem:infsup1} becomes applicable:
\[
\gamma_{\zeta}\geq\frac{\operatorname{Re}(\zeta)}{\left\vert \zeta\right\vert
}\geq\frac{1}{\sqrt{1+\beta^{2}}},
\]
which implies \eqref{aprioriest} for $\zeta\in S_{\beta}$.

\textbf{Case b: }$\zeta\in S_{\beta}^{c}$\textbf{.} For $\operatorname{Re}%
\zeta>0$, existence and uniqueness follows from Lemma~\ref{Lem:infsup1} while
the well-posedness in the case $\operatorname{Re}\zeta=0$ is a consequence of
\cite[Prop.~{8.1.3}]{MelenkDiss}. 
We write 
$\zeta=\operatorname*{Re} \zeta + \operatorname*{i} \operatorname*{Im} \zeta=
:\nu-\operatorname*{i}k$ so
that $\zeta\in S_{\beta}^{c}$ implies $\left\vert k\right\vert \geq\beta\nu$
for $\beta>1$. First let $\nu\geq1$. We choose $v=\frac{\zeta}{|\zeta|}u$ and
consider the real part of (\ref{varformrobin}), which yields
\begin{equation}
\frac{\nu}{|\zeta|}\left\Vert \nabla u\right\Vert ^{2}+\nu|\zeta|\Vert
u\Vert^{2}+\left\vert \zeta\right\vert \left\Vert u\right\Vert _{\Gamma}%
^{2}\leq|(f,u)|+|(g,u)|. \label{ineq:testv1}%
\end{equation}
Young's inequality on the right-hand side leads to%
\[
|(f,u)|+|(g,u)|\leq\frac{1}{2\nu|\zeta|}\Vert f\Vert^{2}+\frac{\nu|\zeta|}%
{2}\Vert u\Vert^{2}+\frac{1}{2|\zeta|}\left\Vert g\right\Vert _{\Gamma}%
^{2}+\frac{|\zeta|}{2}|u|_{\Gamma}^{2}.
\]
These two inequalities imply%
\[
\Vert\nabla u\Vert^{2}+\frac{|\zeta|^{2}}{2}\Vert u\Vert^{2}+\frac{|\zeta|^{2}%
}{2\nu}\Vert u\Vert_{\Gamma}^{2}\leq\frac{1}{2\nu^{2}}\Vert f\Vert^{2}%
+\frac{1}{2\nu}\Vert g\Vert_{\Gamma}^{2},
\]
which is the desired \eqref{aprioriest} in view of $\nu \ge 1$. 

The proof for $\nu<1$ is essentially a repetition of the arguments in the
proof of \cite[Prop.~{8.1.4}]{MelenkDiss} using the inequalities for three
different test functions in \eqref{varformrobin} and Young's inequality. For
completeness, we show the relevant inequalities. 
The first test function is $v= u$ yielding, after taking the real part,
\begin{equation}
\label{eq:11a}
\|\nabla u\|^2 - (k^2 -\nu^2) \|u\|^2 + \nu \|u\|^2_\Gamma \leq 
|(f,u)| + |(g,u)_\Gamma| . 
\end{equation}
Next we choose $v=-\operatorname*{sign}(k)u$ and consider 
the imaginary part to get 
\begin{equation}
\label{eq:11b}
2|k|\nu\Vert u\Vert^{2}+|k|\Vert u\Vert_{\Gamma}^{2}\leq|(f,u)|+|(g,u)_{\Gamma
}|.
\end{equation}
As a last test function we use $v\left(  x\right)  =\left\langle x,\nabla
u\left(  x\right)  \right\rangle $; note that the assumptions on the 
domain imply via elliptic regularity theory that $v\in V$. 
Integration by parts yields
with $d = 3$ (we write $d$ to indicate the generalization to arbitrary spatial
dimension $d$) 
\begin{align*}
\operatorname{Re}a_{\zeta}\left(  u,v\right)   &  =\operatorname{Re}\left(
\left(  \nabla u,\nabla\left\langle x,\nabla u\right\rangle \right)
+\zeta^{2}\left(  u,\left\langle x,\nabla u\right\rangle \right)
+\zeta\left(  u,\left\langle x,\nabla u\right\rangle \right)  _{\Gamma}\right)
\nonumber\\
&  =\left\Vert \nabla u\right\Vert ^{2}+\frac{1}{2}\left(  x,\nabla\left(
\left\Vert \nabla u\right\Vert ^{2}\right)  \right)  +\operatorname{Re}\left(
\zeta^{2}\left(  u,\left\langle x,\nabla u\right\rangle \right)  +\zeta\left(
u,\left\langle x,\nabla u\right\rangle \right)  _{\Gamma}\right) \nonumber\\
&  =\left(1-\frac{d}{2}\right)\left\Vert \nabla u\right\Vert ^{2}+\frac{1}{2}\left(
\left\langle x,n\right\rangle ,\left\Vert \nabla u\right\Vert ^{2}\right)
_{\Gamma}+\frac{d\left(  k^{2}-\nu^{2}\right)  }{2}\left\Vert u\right\Vert
^{2}\nonumber\\
&  +\frac{\left(  \nu^{2}-k^{2}\right)  }{2}\left(  \left\langle
x,n\right\rangle u,u\right)  _{\Gamma}+\operatorname{Re}\left(  \zeta\left(
u,\left\langle x,\nabla u\right\rangle \right)  _{\Gamma}\right)  +2\nu
k\operatorname{Im}\left(  u,\left\langle x,\nabla u\right\rangle \right)
\nonumber\\
&  \leq|(f,\left\langle x,\nabla u\left(  x\right)  \right\rangle
)|+|(g,\left\langle x,\nabla u\left(  x\right)  \right\rangle)_\Gamma|.
\end{align*}
Rearranging yields 
\begin{align}
\label{lastestReazeta-a}%
&
\frac{d(k^2- \nu^2)}{2} \|u\|^2 + 
\frac{1}{2} (\langle x,n\rangle, |\nabla u|^2)_\Gamma 
\leq 
 \left(\frac{d}{2}-1\right) \|\nabla u\|^2  
+ \frac{k^2}{2} (\langle x,n\rangle |u|^2)_\Gamma \\
\nonumber 
& 
+ |\zeta| \|u\|_\Gamma \|\langle x ,\nabla u\rangle\|_\Gamma + 
2 \nu k \|u\| \|\langle x, \nabla u\rangle\| + 
| (f,\langle x,\nabla u\rangle)| + 
| (g,\langle x,\nabla u\rangle)_\Gamma|. 
\end{align}
We remark that \eqref{eq:11b} and \eqref{eq:11a} give
\begin{align}
\label{nablaucoerest-a} 
k \|u\|^2_\Gamma &\stackrel{\eqref{eq:11b}}{\leq}   
|(f,u)| + \|g\|_\Gamma \|u\|_\Gamma \leq 
|(f,u)| + \frac{1}{2k} \|g\|^2_\Gamma + \frac{k}{2} \|u\|^2_\Gamma, 
\\
\label{nablaucoerest} 
\|\nabla u\|^2 &\leq
 (k^2 -\nu^2) \|u\|^2 + |(f,u)| + |(g,u)_\Gamma|, 
\end{align}
which allows for controlling $\|u\|_\Gamma$ and 
$\|\nabla u\|$ in terms of $k \|u\|$ and the data $f$, $g$: 
\begin{align}
\label{nablaucoerest-b} 
k \|u\|^2_\Gamma &\leq 
\frac{2}{k} \|f\| (k \|u\|) + \frac{1}{k} \|g\|^2_\Gamma, \\ 
\label{nablaucoerest-c} 
\|\nabla u\|^2 &\leq
 (k^2 -\nu^2) \|u\|^2 + \frac{3}{k} \|f\| (k \|u\|) + 
\frac{2}{k} \|g\|^2_\Gamma.  
\end{align}
Since $\Omega$ is assumed to be star-shaped, one has  
$0< c_1 \leq \langle x,n(x)\rangle \leq c_2$ for all $x \in \Gamma$. 
Inserting this and 
\eqref{nablaucoerest-c} into 
\eqref{lastestReazeta-a} 
gives with $c_3 = \operatorname{diam}\Omega$ 
\begin{align*}
& (k^2- \nu^2) \|u\|^2 + 
\frac{c_1}{2} \|\nabla u\|^2_\Gamma 
\leq 
k^2 \frac{c_2}{2} \|u\|^2_\Gamma + 
\left(\frac{d}{2}-1\right) 
\left( \frac{3}{k} \|f\| k \|u\|  + \frac{2}{k} \|g\|^2_\Gamma\right)
\\
& 
+ |\zeta| \|u\|_\Gamma \|\langle x ,\nabla u\rangle\|_\Gamma + 
2 \nu k \|u\| \|\langle x, \nabla u\rangle\| + 
| (f,\langle x,\nabla u\rangle)| + 
| (g,\langle x,\nabla u\rangle)_\Gamma|. 
\end{align*}
The proof can be completed with suitable applications of Young's inequality, 
use of \eqref{nablaucoerest-b}, \eqref{nablaucoerest-c}, and 
selecting $\beta$ sufficiently large to treat the 
term $\nu k \|u\| \|\langle x,\nabla u \rangle\|
\leq c_3 \nu k \|u\| \|\nabla u\|$. 
\endproof

\subsection{The Inf-Sup Constant of $a_{\zeta}\left(  \cdot,\cdot\right)  $
for $\zeta\in S_{\beta}^{c}$}

\label{subsec:infsuphelmh}

In the following Theorem~\ref{thm:alternative-inf-sup}
we will prove an alternative estimate (compared to
(\ref{infsupfirst})) for the inf-sup constant that is robust as
$\operatorname{Re}\zeta\rightarrow0$. To estimate this constant we employ the
standard ansatz $u\in V$ and $v=u+z$ for some $z\in V$. Then%
\[
a_{\zeta}\left(  u,u+z\right)  =\left\Vert u\right\Vert _{\left\vert
\zeta\right\vert }^{2}+a_{\zeta}\left(  u,z\right)  +b_{\zeta}\left(
\gamma_{0}u,\gamma_{0}u\right)  +\left(  \zeta^{2}-\left\vert \zeta\right\vert
^{2}\right)  \left\Vert u\right\Vert ^{2}.
\]
The choice of $z$ will be related to some adjoint problem.
the next section.

\begin{theorem}
\label{thm:alternative-inf-sup}
Let $\Omega\subset\mathbb{R}^{3}$ be a smooth domain that is star-shaped with
respect to a ball or let $\Omega$ be a convex polyhedron. 
Then there exists a constant $c>0$
such that for all $\zeta\in\mathbb{C}_{\geq0}^{\circ}$ the inf-sup constant
$\gamma_f$ of (\ref{eq:inf-sup}) satisfies%
\[
\gamma_{\zeta}\geq\frac{1}{1+c\frac{\left\vert \operatorname{Im}%
\zeta\right\vert }{1+\operatorname{Re}\zeta}}.
\]

\end{theorem}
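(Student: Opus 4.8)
The plan is to follow the ansatz already set up before the statement: take an arbitrary $u \in V$, set the test function $v = u + z$, and choose $z$ cleverly so that the perturbation terms collected in
\[
a_{\zeta}(u,u+z) = \|u\|_{|\zeta|}^2 + a_{\zeta}(u,z) + b_{\zeta}(\gamma_0 u,\gamma_0 u) + (\zeta^2 - |\zeta|^2)\|u\|^2
\]
are controlled. Writing $\zeta = \nu - \operatorname{i}k$, one has $\zeta^2 - |\zeta|^2 = -2k^2 + 2\operatorname{i}\nu k$ wait -- more precisely $\zeta^2 - |\zeta|^2 = (\nu^2 - k^2) - 2\operatorname{i}\nu k - (\nu^2+k^2) = -2k^2 - 2\operatorname{i}\nu k$, so the troublesome real term is $-2k^2\|u\|^2$. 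The natural fix is to let $z$ solve the adjoint Helmholtz problem with right-hand side proportional to $u$: specifically I would let $z \in V$ solve $a_{\zeta}(w,z) = 2k^2(u,w)$ for all $w \in V$ (equivalently $z$ solves the Robin problem for $\mathcal{L}_{\bar\zeta}$ with data $2k^2 u$), so that $a_{\zeta}(u,z) = 2k^2\|u\|^2$ exactly cancels the bad term. Then $a_{\zeta}(u,u+z) = \|u\|_{|\zeta|}^2 + b_{\zeta}(\gamma_0 u,\gamma_0 u) + 2\operatorname{i}\nu k \|u\|^2$, and taking the real part gives $\operatorname{Re} a_{\zeta}(u,u+z) = \|u\|_{|\zeta|}^2 + \operatorname{Re}\zeta \,\|\gamma_0 u\|_\Gamma^2 \geq \|u\|_{|\zeta|}^2$, since $\operatorname{Re}\zeta \geq 0$.

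The remaining work is to bound $\|v\|_{|\zeta|} = \|u+z\|_{|\zeta|} \leq \|u\|_{|\zeta|} + \|z\|_{|\zeta|}$, which requires an a priori estimate for $z$ in terms of $\|u\|$. Here I would invoke Lemma~\ref{LemWellPosRobin} applied to the adjoint problem (the adjoint problem has the same structure, just $\zeta$ replaced by $\bar\zeta$, which lies in the same region $\mathbb{C}_{\geq0}^{\circ}$): with $f = 2k^2 u$ and $g = 0$ it yields
\[
\|z\|_{|\zeta|} \leq C_S \frac{2k^2\|u\|}{1+\operatorname{Re}\zeta} = C_S \frac{2|\operatorname{Im}\zeta|^2}{1+\operatorname{Re}\zeta}\|u\| \leq 2C_S \frac{|\operatorname{Im}\zeta|}{1+\operatorname{Re}\zeta}\,\|u\|_{|\zeta|},
\]
using $|\operatorname{Im}\zeta| \leq |\zeta|$ and $|\zeta|\|u\| \leq \|u\|_{|\zeta|}$. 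Combining, $\|v\|_{|\zeta|} \leq \bigl(1 + 2C_S \frac{|\operatorname{Im}\zeta|}{1+\operatorname{Re}\zeta}\bigr)\|u\|_{|\zeta|}$, and therefore
\[
\frac{|a_{\zeta}(u,v)|}{\|u\|_{|\zeta|}\|v\|_{|\zeta|}} \geq \frac{\operatorname{Re} a_{\zeta}(u,u+z)}{\|u\|_{|\zeta|}\|v\|_{|\zeta|}} \geq \frac{\|u\|_{|\zeta|}^2}{\|u\|_{|\zeta|}\cdot(1+2C_S\frac{|\operatorname{Im}\zeta|}{1+\operatorname{Re}\zeta})\|u\|_{|\zeta|}} = \frac{1}{1 + c\frac{|\operatorname{Im}\zeta|}{1+\operatorname{Re}\zeta}}
\]
with $c = 2C_S$, which is exactly the claimed bound. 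Taking the infimum over $u$ and supremum over $v$ finishes the argument.

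The main obstacle, and the step that needs the most care, is the a priori estimate for the adjoint solution $z$: one must check that Lemma~\ref{LemWellPosRobin} genuinely applies to the adjoint problem $a_{\zeta}(\cdot,z) = (2k^2 u,\cdot)$ — i.e. that the adjoint sesquilinear form is again $a_{\bar\zeta}$ up to complex conjugation and that $\bar\zeta$ satisfies the same hypotheses $\operatorname{Re}\bar\zeta \geq 0$, $|\bar\zeta|\geq1$ (it does, since conjugation preserves both), so the constant $C_S$ is unchanged. A secondary point is handling the case $\operatorname{Im}\zeta = 0$ separately: then $k = 0$, the perturbation term vanishes, one may simply take $z = 0$ and $v = u$, and $\operatorname{Re} a_{\zeta}(u,u) = \|u\|_{|\zeta|}^2 + \operatorname{Re}\zeta\|\gamma_0 u\|_\Gamma^2 \geq \|u\|_{|\zeta|}^2$ gives $\gamma_{\zeta}\geq 1$, consistent with the formula. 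One should also note that well-posedness of the adjoint problem (needed for $z$ to exist) is covered by Lemma~\ref{LemWellPosRobin} itself, so no circularity arises. Everything else is the bookkeeping with $|\operatorname{Im}\zeta|\leq|\zeta|$ and the definition of $\|\cdot\|_{|\zeta|}$ already displayed above.
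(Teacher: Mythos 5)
Your proposal is correct and takes essentially the same route as the paper: the test function $v=u+z$ with $z$ solving an adjoint Robin problem whose $L^2$ right-hand side is a multiple of $u$, the a priori bound of Lemma~\ref{LemWellPosRobin} for $\|z\|_{|\zeta|}$, the lower bound $\operatorname{Re}a_{\zeta}(u,v)\geq\|u\|_{|\zeta|}^{2}$, and the resulting estimate $\|v\|_{|\zeta|}\leq\bigl(1+c\,|\operatorname{Im}\zeta|/(1+\operatorname{Re}\zeta)\bigr)\|u\|_{|\zeta|}$. The only (harmless) deviations are that the paper uses the data $(|\zeta|^{2}-\overline{\zeta}^{2})u$ and, after dropping the $(1+\operatorname{Re}\zeta)^{-1}$ factor, must treat $\operatorname{Re}\zeta\geq1/\sqrt{2}$ separately via Lemma~\ref{Lem:infsup1}, whereas your choice $2k^{2}u$ with the retained factor covers all $\zeta\in\mathbb{C}_{\geq0}^{\circ}$ in one stroke; your conjugation slip in writing the adjoint equation (it should read $a_{\overline{\zeta}}(z,w)=2k^{2}(u,w)$, i.e. $a_{\zeta}(w,z)=2k^{2}(w,u)$) and the sign of the leftover purely imaginary term do not affect the argument, since only $a_{\zeta}(u,z)=2k^{2}\|u\|^{2}$ and the real part are used.
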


%

\proof
Let $\nu=\operatorname{Re}\zeta$ and $k=-\operatorname{Im}\zeta$ and set
$\sigma=1/\sqrt{2}$. First, we consider the case $\zeta\in\mathbb{C}_{\geq
0}^{\circ}$ with $\nu\geq\sigma$.

From Lemma~\ref{Lem:infsup1} we have for any $\zeta\in\mathbb{C}_{\geq\sigma
}^{\circ}$ the estimate%
\[
\gamma_{\zeta}\geq\frac{\operatorname{Re}\zeta}{\left\vert \zeta\right\vert
}=\frac{1}{\sqrt{1+\left(  \frac{k}{\nu}\right)  ^{2}}}\geq\frac{1}%
{1+\frac{\left\vert k\right\vert }{\nu}}\geq\frac{1}{1+c\frac{\left\vert
k\right\vert }{\nu+1}}\quad\text{for }c=1+\sqrt{2}.
\]

It remains to consider the case $\zeta\in\mathbb{C}_{\geq0}^{\circ}$ with
$\nu<\sigma$. Let $u,z\in V$ and set $v=u+z$. Then%
\begin{equation}
a_{\zeta}\left(  u,v\right)  =\left\Vert u\right\Vert _{\left\vert
\zeta\right\vert }^{2}+\left(  \zeta^{2}-\left\vert \zeta\right\vert
^{2}\right)  \left\Vert u\right\Vert ^{2}+\zeta\left(  u,u\right)  _{\Gamma
}+a_{\zeta}\left(  u,z\right)  . \label{azetareal}%
\end{equation}
We consider the adjoint problem: find $z\in V$ such that%
\begin{equation}
a_{\overline{\zeta}}\left(  z,w\right)  =\alpha^{2}\left(  u,w\right)
\quad\forall w\in V\quad\text{with\quad}\alpha^{2}:=\left\vert \zeta
\right\vert ^{2}-\overline{\zeta}^{2}=-2k\operatorname*{i}\overline{\zeta},
\label{adjproblemRobin}%
\end{equation}
which is well-posed according to Lemma~\ref{LemWellPosRobin} and satisfies%
\[
\left\Vert z\right\Vert _{|\zeta|}\leq C_{S}\left\vert \alpha\right\vert
^{2}\left\Vert u\right\Vert =2C_{S}\left\vert k\zeta\right\vert \left\Vert
u\right\Vert \leq2C_{S}\left\vert k\right\vert \left\Vert u\right\Vert
_{|\zeta|}.
\]
For this choice of $z$, we consider the real part of (\ref{azetareal}) and
obtain%
\[
\operatorname{Re}a_{\zeta}\left(  u,v\right)  \geq\left\Vert u\right\Vert
_{|\zeta|}^{2}+\nu\left\Vert u\right\Vert _{\Gamma}^{2}\geq\left\Vert
u\right\Vert _{|\zeta|}^{2}.
\]
Hence%
\[
\left\Vert v\right\Vert _{|\zeta|}\leq\left(  1+2C_{S}\left\vert k\right\vert
\right)  \left\Vert u\right\Vert _{|\zeta|}%
\]
and%
\[
\gamma_{\zeta}\geq\frac{1}{1+2C_{S}\left\vert k\right\vert }\geq\frac
{1}{1+\tilde{c}\frac{\left\vert k\right\vert }{\nu+1}}\quad\text{for }0\leq
\nu\leq\sigma.
\]%
\endproof

\section{Regular Decomposition of the Helmholtz Solution\label{SecRegDecomp}}

\label{Sec:RegDecomp} In this section, we develop a regular decomposition of
the solution of the Helmholtz problem \eqref{eq:strong-helmholtz-robin} based
on a frequency splitting of the right-hand side. The frequency splitting for
functions defined on the full space $\mathbb{R}^{3}$ is defined via their
Fourier transform (Sect.\ \ref{subsec:fullsplitting}). For functions defined
on finite domains, we derive the regular splitting using a lifting operator
(Sect. \ref{subsec:Solution-operators}). This generalizes the theory developed
in \cite{MelenkSauterMathComp, mm_stas_helm2} to complex frequencies
and the resulting estimates are explicit with respect to the real and
imaginary part of the wave number.

\subsection{The Full Space Adjoint Problem for $\zeta\in S_{\beta}^{c}$}

\label{subsec:fullsplitting}

The first result concerns the adjoint problem for the full space
$\Omega=\mathbb{R}^{3}$. Let $\phi\in L^{2}\left(  \Omega\right)  $ be a
function with compact support. We choose $R>0$ sufficiently large so that
the open ball $B_{R}$ with radius $R$ centered at the origin contains
$\operatorname*{supp}\phi$. We consider the problem
\begin{equation}%
\begin{split}
(-\Delta+\overline{\zeta^{2}})z  &  =\phi\qquad\qquad\text{ in }\mathbb{R}%
^{3},\\
\left\vert \left\langle \frac{x}{\left\Vert x\right\Vert },\nabla z\left(
x\right)  \right\rangle +\overline{\zeta}z\left(  x\right)  \right\vert  &
=o\left(  \Vert x\Vert^{-1}\right)  ~\text{ as }\Vert x\Vert\rightarrow\infty.
\end{split}
\label{adjproblem_strong}%
\end{equation}
To analyze this equation we employ Fourier transformation and introduce a cutoff
function $\mu\in C^{\infty}\left(  \mathbb{R}_{\geq0}\right)  $ satisfying :w

\begin{equation}%
\begin{array}
[c]{lll}%
\operatorname*{supp}\mu\subset\left[  0,4R\right]  , & \left.  \mu\right\vert
_{\left[  0,2R\right]  }=1, & \left\vert \mu\right\vert _{W^{1,\infty}\left(
\mathbb{R}_{\geq0}\right)  }\leq\dfrac{C_\mu}{R},\\
&  & \\
\forall x\in\mathbb{R}_{\geq0}:0\leq\mu\left(  x\right)  \leq1, & \left.
\mu\right\vert _{\left[  4R,\infty\right[  }=0, & \left\vert \mu\right\vert
_{W^{2,\infty}\left(  \mathbb{R}_{\geq0}\right)  }\leq\dfrac{C_\mu}{R^{2}}.
\end{array}
\label{mueestimates}%
\end{equation}

The fundamental solution to the Helmholtz operator $\mathcal{L}_{\zeta
}u=-\Delta u+\zeta^{2}u$ in $\mathbb{R}^{3}$ is given by%
\[
G\left(  \zeta,x\right)  :=g\left(  \zeta,\left\Vert x\right\Vert \right)
\quad\text{with\quad}g\left(  \zeta,r\right)  :=\frac{\operatorname*{e}%
^{-\zeta r}}{4\pi r}.
\]
It satisfies%
\[
\left|\left\langle \frac{x}{\left\Vert x\right\Vert },\nabla_{x}G\left(
\zeta,x\right)  \right\rangle +\zeta G\left(  \zeta,x\right)\right|  = o\left(
\left\Vert x\right\Vert ^{-1}\right)  \quad\text{for }\left\Vert x\right\Vert
\rightarrow\infty
\]
so that $z$ is given by $z = G(\overline{\zeta})\ast \phi$.
Define $M\left(  x\right)  :=\mu\left(  \left\Vert x\right\Vert \right)  $
and
\[
z_{\mu}\left(  x\right)  :=\left(  G\left(  \overline{\zeta}\right)  M\right)
\ast\phi:=\int_{B_{R}}G\left(  \overline{\zeta},x-y\right)  M\left(
x-y\right)  \phi\left(  y\right)  dy\qquad\forall x\in\mathbb{R}^{3}.
\]
The properties of $\mu$ ensure $z_{\mu}|_{B_{R}}=z|_{B_{R}}$. 
To analyze the stability and
regularity of $z_{\mu}$ we introduce a frequency splitting of the solution
$z_{\mu}=z_{H^{2}}+z_{{\mathcal{A}}}$ that depends on the complex frequency
$\zeta\in\mathbb{C}_{\geq0}$ and a parameter $\lambda\geq \lambda_0 > 1$.

\begin{lemma}
\label{lemma:decomposition} 
Let $\phi\in L^{2}\left(  \mathbb{R}^{3}\right)  $
such that $\operatorname*{supp}\phi$ is contained in a ball $B_{R}:=B_R(0)$ 
of radius $R>0$ centered at the origin, 
and let $\mu$ be a cutoff function satisfying 
(\ref{mueestimates}). Then there exists a constant $C>0$ depending only on
$R$ and $\mu$ such that 
the solution $z = G\left(\overline{\zeta}\right)\ast \phi$ 
of (\ref{adjproblem_strong}) and $z_{\mu}:=\left(  G\left(
\overline{\zeta}\right)  M\right)  \ast\phi$ satisfy $\left.  z\right\vert
_{B_{R}}=\left.  z_{\mu}\right\vert _{B_{R}}$ and%
\begin{equation}
\label{eq:lemma:decomposition-10}
\left\Vert z_{\mu}\right\Vert _{|\zeta|}\leq\frac{C}{1+\operatorname{Re}\zeta}%
\Vert\phi\Vert\qquad\forall\zeta\in\mathbb{C}_{\geq0}.
\end{equation}
Furthermore, for every $\lambda\geq \lambda_0 >1$ 
and $\zeta\in\mathbb{C}_{\geq0}$ with $\operatorname{Im}\zeta\neq 0$ 
there exists a $\lambda$- and $\zeta$-dependent splitting
$z_{\mu}=z_{H^{2}}+z_{{\mathcal{A}}}$ satisfying 
\begin{subequations}
\label{formuladecomplemma}
\end{subequations}%
\begin{align}
\left\Vert \nabla^{p}z_{H^{2}}\right\Vert  &  \leq C^\prime \frac{\lambda}{\lambda
-1}\left(  \frac{|\zeta|}{\operatorname{Im}\zeta}\right)  ^{2}\left(
\lambda\left\vert \operatorname{Im}\zeta\right\vert \right)  ^{p-2}\left\Vert
\phi\right\Vert \qquad\forall p\in\{0,1,2\},\label{formuladecomplemmaa}\\
\Vert\nabla^{p}z_{{\mathcal{A}}}\Vert &  \leq C^\prime\frac{1+\left\vert
\zeta\right\vert }{1+\operatorname{Re}\zeta}\left(  \sqrt{3}\lambda\left\vert
\operatorname{Im}\zeta\right\vert \right)  ^{p-2}\left\Vert \phi\right\Vert
\qquad\forall p\in{\mathbb{N}}_{0}. \label{formuladecomplemmab}%
\end{align}
Here,
$|\nabla^{p}z_{\mathcal{A}}|$ stands for a sum over all derivatives of order $p$
(see \eqref{eq:analytic-part}). The  constant $C'$ depends only
on $\lambda_0$, $R$, and $\mu$. 
\end{lemma}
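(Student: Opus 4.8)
The plan is to analyze $z_\mu = (G(\overline\zeta) M) \ast \phi$ on the Fourier side, where $G(\overline\zeta)$ has symbol $(|\xi|^2 + \overline\zeta^2)^{-1}$. First I would establish the coarse bound \eqref{eq:lemma:decomposition-10}: since $z_\mu$ coincides with $z$ on $B_R$ and $z$ solves the full-space radiating problem, the estimate $\|z_\mu\|_{|\zeta|} \le C(1+\operatorname{Re}\zeta)^{-1}\|\phi\|$ should follow either from the explicit kernel $g(\overline\zeta,r) = e^{-\overline\zeta r}/(4\pi r)$ together with the decay factor $e^{-(\operatorname{Re}\zeta)\|x\|}$ and the cutoff $M$ (which localizes to $\|x-y\|\le 4R$), or equivalently from a Plancherel argument: $\widehat{G(\overline\zeta)M}$ is a smooth approximation of $(|\xi|^2+\overline\zeta^2)^{-1}$ whose $L^\infty$-norm is controlled by $(1+\operatorname{Re}\zeta)^{-2}$ (uniformly, using $|\zeta|\ge 1$ is not needed here, only $\operatorname{Re}\zeta\ge 0$), and the gradient picks up one extra factor of $|\zeta|$, giving $\|\nabla z_\mu\|^2 + |\zeta|^2\|z_\mu\|^2 \lesssim (1+\operatorname{Re}\zeta)^{-2}\|\phi\|^2$. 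The compact support of $\phi$ in $B_R$ and the fixed $R$ are what make the constant depend only on $R$ and $\mu$.

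Next comes the frequency splitting, which is the heart of the lemma. Following \cite{MelenkSauterMathComp}, I would pick a threshold at frequency $\sim \lambda|\operatorname{Im}\zeta|$ and split $\widehat{z_\mu}$ via a (sharp or smooth) cutoff $\chi_{\le}(\xi) + \chi_{>}(\xi) = 1$ in $\xi$-space, with $\chi_\le$ supported where $|\xi| \le \lambda|\operatorname{Im}\zeta|$ roughly. Here the point of $\lambda > 1$ is that on the support of $\chi_>$ one stays bounded away from the "resonance set" $|\xi| = |\operatorname{Im}\zeta|$ where the symbol $(|\xi|^2 + \overline\zeta^2)^{-1} = (|\xi|^2 - k^2 + 2\operatorname{i} k\nu)^{-1}$ could blow up as $\nu=\operatorname{Re}\zeta\to 0$. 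Define $z_{H^2}$ to be the low-frequency part (convolution with the part of $G(\overline\zeta)M$ whose transform is supported in $|\xi|\lesssim \lambda|\operatorname{Im}\zeta|$) and $z_{\mathcal A}$ the complementary high-frequency part; I should double-check which convention the paper uses — typically $z_{H^2}$ carries the piece near the singularity (finite $H^2$-regularity, bad constant) and $z_{\mathcal A}$ the smooth piece (analytic, entire derivatives but better $\zeta$-dependence), but here \eqref{formuladecomplemmaa} shows $z_{H^2}$ only controls $p\le 2$ with a $(|\zeta|/\operatorname{Im}\zeta)^2$ factor while \eqref{formuladecomplemmab} controls all $p$ with the $(1+|\zeta|)/(1+\operatorname{Re}\zeta)$ factor, so in fact $z_{\mathcal A}$ is the high-frequency (smooth, analytic) part and $z_{H^2}$ the low-frequency part. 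For $z_{H^2}$: estimate $\|\nabla^p z_{H^2}\|$ by Plancherel, bounding $|\xi|^p |\widehat{\chi_\le \cdot G(\overline\zeta)M \cdot \widehat\phi}|$; the worst case of the symbol on $|\xi| \lesssim \lambda|\operatorname{Im}\zeta|$ near $|\xi| = |k|$ gives a factor $\sim |\zeta|^2/(\operatorname{Im}\zeta)^2$ (this is where $\operatorname{Im}\zeta\ne 0$ is used), and the measure/integration over the support band produces the $(\lambda|\operatorname{Im}\zeta|)^{p-2}$ scaling together with $\lambda/(\lambda-1)$.

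For the analytic part $z_{\mathcal A}$: here the transform is supported in $|\xi| \gtrsim \lambda|\operatorname{Im}\zeta|$, so the symbol $|{(|\xi|^2 + \overline\zeta^2)^{-1}}| \lesssim (1-\lambda^{-2})^{-1}|\xi|^{-2}$ is of order $|\xi|^{-2}$, and one gains: $|\xi|^p|\widehat{z_{\mathcal A}}| \lesssim |\xi|^{p-2}|\widehat\phi|$. Since $\widehat\phi$ is genuinely $L^2$ (not decaying), to get finiteness of $\|\nabla^p z_{\mathcal A}\|$ for all $p$ one must exploit the cutoff $M$: the presence of the convolution with $G(\overline\zeta)M$ means $z_{\mathcal A}$ is, up to the low-frequency correction, a smoothed/truncated object — more precisely, the high-pass part is analytic on $B_R$ because $\phi$ itself has no bearing on high-frequency growth once we restrict to the ball where $z_\mu = z$ and $z$ is a convolution of $\phi$ with an analytic (away from the origin) kernel times a cutoff. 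The derivative bounds with the geometric factor $(\sqrt3\,\lambda|\operatorname{Im}\zeta|)^{p-2}$ come from estimating the high-frequency tail $\int_{|\xi|\ge \lambda|\operatorname{Im}\zeta|} |\xi|^{2p}|\widehat{G(\overline\zeta)M}(\xi)|^2 d\xi$ type expressions, using that $\widehat{G(\overline\zeta)M}$ decays like $|\xi|^{-2}$ and is smooth (from the $C^\infty$ cutoff $\mu$), and the $\sqrt3$ presumably tracks the dimension $d=3$ in converting $|\nabla^p|$ (sum over multi-indices) to $|\xi|^p$. I expect the main obstacle to be making the analytic estimate \eqref{formuladecomplemmab} fully rigorous for all $p\in\mathbb N_0$ — i.e. showing the constant $C'$ genuinely does not depend on $p$ and that the geometric decay rate $\sqrt3\lambda|\operatorname{Im}\zeta|$ is correct. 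This requires a careful integration-by-parts / contour-shift argument on the Fourier integral for $(G(\overline\zeta)M)\ast\phi$, exploiting the holomorphy of $e^{-\overline\zeta r}/r$ in $r$ and the smoothness and compact support of $\mu$, exactly as in \cite{MelenkSauterMathComp}; adapting it to complex $\zeta$ (rather than $\zeta\in\operatorname{i}\mathbb R$) while keeping everything explicit in $\operatorname{Re}\zeta$ and $\operatorname{Im}\zeta$ is the delicate bookkeeping one has to push through.
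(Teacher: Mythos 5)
Your overall framework (passing to the Fourier side, studying the symbol $\sigma(\overline\zeta,s)$ of $G(\overline\zeta)M$, splitting at frequency $\lambda|\operatorname{Im}\zeta|$, Plancherel) is the paper's framework, but you have assigned the two pieces of the splitting the wrong way around, and with your assignment the key estimate \eqref{formuladecomplemmab} cannot be proved. In the paper the \emph{datum} $\phi$ is filtered, $\phi=L_{\mathbb{R}^3}\phi+H_{\mathbb{R}^3}\phi$ as in \eqref{def:LH-filter-full} (equivalent to filtering $z_\mu$, since the multipliers commute), and then $z_{\mathcal A}:=(G(\overline\zeta)M)\ast L_{\mathbb{R}^3}\phi$ is the \emph{low}-frequency part, while $z_{H^2}:=(G(\overline\zeta)M)\ast H_{\mathbb{R}^3}\phi$ is the \emph{high}-frequency part. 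The low-pass piece is band-limited to $\{|\xi|\le\lambda|\operatorname{Im}\zeta|\}$, hence entire, and for $p\ge 2$ one simply writes $s^p|\sigma|\le(\lambda|\operatorname{Im}\zeta|)^{p-2}\,s^2|\sigma|$ with $\sup_s|s^2\sigma|\lesssim(1+|\zeta|)E_0(4R\operatorname{Re}\zeta)$; this is exactly where the geometric rate $(\sqrt3\lambda|\operatorname{Im}\zeta|)^{p-2}$ and the factor $(1+|\zeta|)/(1+\operatorname{Re}\zeta)$ in \eqref{formuladecomplemmab} come from. Your $z_{\mathcal A}$, by contrast, is the high-pass part of an $L^2$ datum composed with a symbol decaying only like $s^{-2}$: such a function is $H^2$ and no better, so $\|\nabla^p z_{\mathcal A}\|$ for $p>2$ is in general infinite. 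Your attempted rescue — analyticity on $B_R$ because the kernel is analytic away from the origin — does not work: the $1/r$ singularity of $G$ lies inside the convolution range, the exact solution $z$ is itself only $H^2_{\mathrm{loc}}$ for $\phi\in L^2$, and in any case such an argument could not produce the specific geometric rate, which is a consequence of band-limiting, not of kernel smoothness.

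Two further points. First, the factor $\frac{\lambda}{\lambda-1}\bigl(\frac{|\zeta|}{\operatorname{Im}\zeta}\bigr)^2$ in \eqref{formuladecomplemmaa} is not generated by the resonance $|\xi|\approx|\operatorname{Im}\zeta|$ inside the low-frequency band (the cutoff $M$ mollifies the symbol so that $|\sigma|\lesssim E_0(4R\operatorname{Re}\zeta)/|\zeta|$ uniformly, including at the resonance); it arises on the region $s\ge\lambda|\operatorname{Im}\zeta|$ from an \emph{additional} integration by parts in the term $T^{\mathrm I}$, designed to remove an unwanted factor $|\zeta|$, followed by the bound $|\zeta|^2/|\zeta^2+s^2|\le(\lambda^2-1)^{-1}(|\zeta|/\operatorname{Im}\zeta)^2$; this refinement is missing from your sketch. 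Second, for \eqref{eq:lemma:decomposition-10} the argument "symbol bounded by $(1+\operatorname{Re}\zeta)^{-2}$, gradient picks up a factor $|\zeta|$" is not sound (the gradient picks up a factor $|\xi|$, not $|\zeta|$, and a plain $L^1$/$L^\infty$ kernel bound does not see the oscillation); what is needed, and what the paper proves by integrating by parts in $r$ against $\operatorname{e}^{-\zeta r}$, are the two separate estimates $|\sigma(\zeta,s)|\lesssim E_0(4R\operatorname{Re}\zeta)/|\zeta|$ and $|s\,\sigma(\zeta,s)|\lesssim E_0(4R\operatorname{Re}\zeta)$, which together give $|\zeta|\|z_\mu\|+\|\nabla z_\mu\|\lesssim(1+\operatorname{Re}\zeta)^{-1}\|\phi\|$.
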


\begin{remark}
As the estimates in 
Lemma~\ref{lemma:decomposition} degenerate for $\operatorname{Im} \zeta \rightarrow 0$, 
we will employ Lemma~\ref{lemma:decomposition} for
$\zeta \in S^c_\beta$ for fixed $\beta > 0$. 
Then $\left\vert \operatorname{Im}\zeta\right\vert \geq
\beta\operatorname{Re}\zeta$ and we have%
\begin{equation}
\left\vert \operatorname{Im}\zeta\right\vert \leq\left\vert \zeta\right\vert
\leq\tilde{C}\left\vert \operatorname{Im}\zeta\right\vert \quad\text{with\quad
}\tilde{C}:=\frac{\sqrt{1+\beta^{2}}}{\beta}. \label{equivalence}%
\end{equation}
In particular, $\zeta \in S^c_\beta$ implies $\operatorname{Im} \zeta \ne 0$. 
\end{remark}

%

\proof
For $\zeta\in\mathbb{C}_{\geq0}$, we set $\nu=\operatorname{Re}\zeta$ and
$k=-\operatorname*{Im}\zeta$. In order to construct the splitting $z=z_{H^{2}%
}+z_{\mathcal{A}}$, we start by recalling the definition of the Fourier
transformation for functions with compact support
\[
\hat{w}\left(  \xi\right)  =\mathcal{F}\left(  w\right)  =\left(  2\pi\right)
^{-d/2}\int_{\mathbb{R}^{d}}\operatorname{e}^{-\operatorname*{i}\left\langle
\xi,x\right\rangle }w\left(  x\right)  dx\qquad\forall\xi\in\mathbb{R}^{d}%
\]
and the inversion formula
\[
w\left(  x\right)  =\mathcal{F}^{-1}\left(  w\right)  =\left(  2\pi\right)
^{-d/2}\int_{\mathbb{R}^{d}}\operatorname{e}^{\operatorname*{i}\left\langle
x,\xi\right\rangle }\hat{w}\left(  \xi\right)  d\xi\qquad\forall
x\in\mathbb{R}^{d}.
\]
Next, we introduce a frequency splitting of a function $w\in L^{2}\left(
\Omega\right)  $ depending on $\zeta$ and a parameter $\lambda>1$ by using the
Fourier transformation. The low- and high-frequency part of $w$ is given by%
\begin{equation}
L_{\mathbb{R}^{3}}w:=\mathcal{F}^{-1}\left(  \chi_{\lambda\left\vert
k\right\vert }\mathcal{F}\left(  w\right)  \right)  \quad\text{and\quad
}H_{\mathbb{R}^{3}}w:=\mathcal{F}^{-1}\left(  \left(  1-\chi_{\lambda
\left\vert k\right\vert }\right)  \mathcal{F}\left(  w\right)  \right)
\label{def:LH-filter-full}%
\end{equation}
where $\chi_{\delta}$ is the characteristic function of the open ball with
radius $\delta>0$ centered at the origin.

We construct a decomposition of \(z_\mu\)
\begin{equation}
z_{\mu}=z_{H^{2}}+z_{\mathcal{A}}.
\label{eq:lemma-decomposition-100}%
\end{equation}
as follows: We decompose the
right-hand side $\phi$ in \eqref{adjproblem_strong} via%
\begin{equation}
\phi=\phi_{\left\vert k\right\vert }+\phi_{\left\vert k\right\vert }%
^{c}=L_{\mathbb{R}^{3}}\phi+H_{\mathbb{R}^{3}}\phi.
\label{eq:lemma-decomposition-f-bandlimited}%
\end{equation}
Accordingly, we define the decomposition of $z_{\mu}$ by
\begin{equation}
z_{H^{2}}:=\left(  G\left(  \overline{\zeta}\right)  M\right)  \star
\phi_{\left\vert k\right\vert }^{c}\quad\text{and\quad}z_{{\mathcal{A}}%
}:=\left(  G\left(  \overline{\zeta}\right)  M\right)  \star\phi_{\left\vert
k\right\vert }. \label{eq:lemma-decomposition-v}%
\end{equation}
%
The Fourier transform of $G\left(  \overline{\zeta},\cdot\right)  M$ is given
by%
\[
\left(  \widehat{G\left(  \overline{\zeta},\cdot\right)  M}\right)  \left(
\xi\right)  =\sigma\left(  \overline{\zeta},\left\Vert \xi\right\Vert \right)
\]
with%
\[
\sigma\left(  \zeta,s\right)  =(2\pi)^{-3/2}4\pi%
{\displaystyle\int_{0}^{\infty}}
g\left(  \zeta,r\right)  \mu\left(  r\right)  r^{2}\dfrac{\sin\left(
rs\right)  }{rs}dr.
\]
In the following we will analyze the symbol 
$\sigma\left(  \zeta,\cdot\right)  $.
We have: 
\begin{align*}
\left\vert s\sigma\left(  \zeta,s\right)  \right\vert  &  =(2\pi)^{-3/2}\left\vert
{\displaystyle\int_{0}^{\infty}}
\operatorname{e}^{-\zeta r}\mu\left(  r\right)  \sin\left(  rs\right)
dr\right\vert \\
&  \leq(2\pi)^{-3/2}\int_{0}^{4R}\operatorname*{e}\nolimits^{-\nu
r}dr=4R\sqrt{\frac{2}{\pi}}E_{0}\left(  4R\nu\right) 
\end{align*}
with \(E_{0}\left(  t\right)  :=\frac{1-\operatorname*{e}\nolimits^{-t}}{t}\leq
\frac{C_{0}}{1+t}.\)
Applying integration by parts leads to%
\begin{align*}
\sigma\left(  \zeta,s\right)   &  =(2\pi)^{-3/2}
{\displaystyle\int_{0}^{\infty}}
\operatorname{e}^{-\zeta r}\mu\left(  r\right)  \dfrac{\sin\left(  rs\right)
}{s}dr\\&=(2\pi)^{-3/2}\frac{1}{\zeta}%
{\displaystyle\int_{0}^{\infty}}
\operatorname{e}^{-\zeta r}\partial_{r}\left(  \mu\left(  r\right)
\dfrac{\sin\left(  rs\right)  }{s}\right)  dr\\
&  =(2\pi)^{-3/2}\frac{1}{\zeta}%
{\displaystyle\int_{0}^{\infty}}
\operatorname{e}^{-\zeta r}\left(  \mu^{\prime}\left(  r\right)  \dfrac
{\sin\left(  rs\right)  }{s}+\mu\left(  r\right)  \cos rs\right)  dr.
\end{align*}
This allows for the estimate
\begin{align*}
\left\vert \sigma\left(  \zeta,s\right)  \right\vert  &  = (2\pi)^{-3/2}
\frac{1}{|\zeta|}\left\vert
{\displaystyle\int_{0}^{\infty}}
\operatorname{e}^{-\zeta r}\left(  \mu^{\prime}\left(  r\right)  \dfrac
{\sin\left(  rs\right)  }{s}+\mu\left(  r\right)  \cos rs\right)
dr\right\vert \\
&  \leq(2\pi)^{-3/2}\frac{1}{|\zeta|}%
{\displaystyle\int_{0}^{4R}}
\operatorname{e}^{-\nu r}\left(  \frac{C_\mu}{R}r+1\right)  dr\\
&  \leq4R(2\pi)^{-3/2}\frac{1}{|\zeta|}\left(  4C_\mu E_{1}\left(  4\nu
R\right)  +E_{0}\left(  4R\nu\right)  \right)
\end{align*}
with%
\[
E_{1}\left(  t\right)  =\frac{1-\operatorname*{e}^{-t}(1+t)}{t^{2}}\leq
E_{0}^{2}\left(  t\right)  .
\]
Hence,%
\[
\left\vert \sigma\left(  \zeta,s\right)  \right\vert \leq 4R(2\pi)^{-3/2}
\frac{E_{0}\left(  4R\nu\right)  }{|\zeta|}\left(  1+4C_\mu E_{0}(4R\nu) \right). 
\]
Since $E_{0}\left(  t\right)  \leq1$ we end up with%
\[
\left\vert \sigma\left(  \zeta,s\right)  \right\vert \leq 4R \left(
1+4C_\mu\right)  (2\pi)^{-3/2}\frac{E_{0}\left(  4R\nu\right)  }{|\zeta|}.
\]
As a consequence, we have proved that%
\begin{align*}
|\zeta|\left\Vert z_\mu\right\Vert  &  \leq 4 R\left(  1+4C_\mu\right)
E_{0}\left(  4R\nu\right)  \left\Vert \phi\right\Vert, \\
\left\Vert \partial_{i}z_\mu\right\Vert  &  \leq 4 RE_{0}\left(  4R\nu\right)
\left\Vert \phi\right\Vert
\end{align*}
so that we have 
\[
\left\Vert z_\mu\right\Vert _{|\zeta|}\leq\sqrt{2+\left(  1+4C_\mu\right)  ^{2}%
}\left(  16\pi R\right)  E_{0}\left(  4R\nu\right)  \left\Vert \phi\right\Vert
.
\]
This shows \eqref{eq:lemma:decomposition-10}. 
In the following we estimate higher order derivatives. For the product
$s^{2}\sigma\left(  s\right)  $, we get%
\begin{align*}
\left\vert s^{2}\sigma\left(  \zeta,s\right)  \right\vert  &  =(2\pi)^{-3/2}\left\vert \int_{0}^{\infty}\operatorname{e}^{-\zeta r}\mu\left(
r\right)  s\sin\left(  rs\right)  dr\right\vert \\
&=(2\pi)^{-3/2}
\left\vert \int_{0}^{\infty}\operatorname{e}^{-\zeta r}\mu\left(  r\right)
\partial_{r}\cos\left(  rs\right)  dr\right\vert \\
&  \leq(2\pi)^{-3/2} \left(  \left\vert \int_{0}^{\infty}\cos\left(
rs\right)  \partial_{r}\left(  \operatorname{e}^{-\zeta r}\mu\left(  r\right)
\right)  dr\right\vert +1\right) \\
&  \leq(2\pi)^{-3/2}|\zeta|\left\vert \int_{0}^{\infty}\cos\left(
rs\right)  \operatorname{e}^{-\zeta r}\mu\left(  r\right)  dr\right\vert \\&\quad
+(2\pi)^{-3/2}\left(  \left\vert \int_{0}^{\infty}\cos\left(
rs\right)  \operatorname{e}^{-\nu r}\mu^{\prime}\left(  r\right)
dr\right\vert +1\right) \\
&  =:T^{\operatorname*{I}}+T^{\operatorname*{II}}.
\end{align*}
The estimates
\begin{align}
\label{eq:TI}
T^{\operatorname*{I}}  &  \leq (2\pi)^{-3/2} 4RE_{0}\left(  4R\nu\right)
|\zeta|,\\
\label{eq:TII}
T^{\operatorname*{II}}  &  \leq (2\pi)^{-3/2} 4CE_{0}\left(
4R\nu\right)
\end{align}
follow from the properties of $\mu$ (cf.~(\ref{mueestimates})). As a simple
consequence we obtain for $m\geq2$%
\begin{equation}
\label{eq:s2sigma}
\left\vert s^{2}\sigma\left(  \zeta,s\right)  \right\vert \leq (2\pi)^{-3/2} 4 \left(  C+R|\zeta|\right)E_{0}\left(  4R\nu\right)
\end{equation}
and
\begin{equation}
\label{eq:sigma-analytic}
\sup_{0<s<\lambda\left\vert k\right\vert }\left\vert s^{m}\sigma\left(
\zeta,s\right)  \right\vert \leq (2\pi)^{-3/2} 4 \left(
C+R|\zeta|\right)E_{0}\left(  4R\nu\right)  \left(  \lambda\left\vert k\right\vert
\right)  ^{m-2}.
\end{equation}
Hence for $\alpha\in\mathbb{N}_{0}^{3}$, $\left\vert \alpha\right\vert =2$, 
we have 
\[
\left\Vert \partial^{\alpha}z_\mu\right\Vert \leq 4\left(  R\left\vert
\zeta\right\vert +C\right)  E_{0}\left(  4R\nu\right)  \left\Vert
\phi\right\Vert
\]
and%
\begin{align}
\nonumber 
\Vert\nabla^{p}z_{{\mathcal{A}}}\Vert &  =\sqrt{\sum_{\substack{\alpha
\in\mathbb{N}_{0}^{3}\\\left\vert \alpha\right\vert =p}}\binom{p}{\alpha
}\left\Vert \partial^{\alpha}z_{\mathcal{A}}\right\Vert ^{2}}\leq C^{\prime
}E_{0}\left(  4R\nu\right)  \left(  1+\left\vert \zeta\right\vert \right)
\left(  \lambda\left\vert k\right\vert \right)  ^{p-2}3^{p/2}\left\Vert
\phi\right\Vert \\
\label{eq:analytic-part}
&  \leq C^{\prime\prime}\frac{1+|\zeta|}{1+\nu}\left(  \sqrt{3}\lambda
\left\vert k\right\vert \right)  ^{p-2}\left\Vert \phi\right\Vert
\qquad\forall p\in{\mathbb{N}}_{\geq2}.
\end{align}
The bounds \eqref{eq:analytic-part} expresses the desired estimate 
\eqref{formuladecomplemmab}. A direct application of \eqref{eq:s2sigma}
does not lead to \eqref{formuladecomplemmaa} as it 
introduces an undesired factor $|\zeta|$. This is removed by noting 
that is suffices to consider $s = \|\xi\|$ with $s \ge \lambda |k|$
and that only the estimates for $T^I$ need to be refined. This is 
achieved with an integration by parts: 
\begin{align}
\left\vert T^{\operatorname*{I}}\right\vert  &  =(2\pi)^{-3/2}
|\zeta|\left\vert \int_{0}^{4R}\cos\left(  rs\right)  \operatorname{e}^{-\zeta
r}\mu\left(  r\right)  dr\right\vert \nonumber\\
&  =(2\pi)^{-3/2}|\zeta|\left\vert \left(  \frac{\zeta}{\zeta
^{2}+s^{2}}+\int_{0}^{4R}\frac{\operatorname*{e}^{-\zeta r}(\zeta
\cos(rs)-s\sin(rs))}{\zeta^{2}+s^{2}}\mu^{\prime}\left(  r\right)  dr\right)
\right\vert \nonumber\\
&  \leq(2\pi)^{-3/2}\left(  \frac{|\zeta|^{2}}{\left\vert \zeta
^{2}+s^{2}\right\vert }\left(  1+\frac{C}{R}\int_{0}^{4R}\operatorname*{e}%
\nolimits^{-\nu r}dr\right)  \right. \nonumber\\
&  \phantom{\leq\sqrt{\frac{1}{2\pi}} \Big(}+\left.  \frac{\left\vert
\zeta\right\vert s}{\left\vert \zeta^{2}+s^{2}\right\vert }\left\vert \int
_{0}^{4R}\operatorname*{e}\nolimits^{-\zeta r}\sin\left(  rs\right)
\mu^{\prime}\left(  r\right)  dr\right\vert \right)  . \label{TI}%
\end{align}
Observe%
\[
\frac{|\zeta|^{2}}{\left\vert \zeta^{2}+s^{2}\right\vert }=\frac{|\zeta|^{2}%
}{\sqrt{\left(  \nu^{2}+s^{2}-k^{2}\right)  ^{2}+4\nu^{2}k^{2}}}\leq
\frac{|\zeta|^{2}}{s^{2}-k^{2}}\leq\left(  \frac{|\zeta|}{\operatorname{Im}%
\zeta}\right)  ^{2}\frac{1}{\lambda^{2}-1}.
\]
Also we have%
\[
\frac{s|\zeta|}{\nu^{2}+\left(  s^{2}-k^{2}\right)  }\leq\frac{\lambda
\left\vert k\right\vert |\zeta|}{\nu^{2}+k^{2}\left(  \lambda^{2}-1\right)
}\leq\frac{\lambda}{\lambda^{2}-1}\frac{|\zeta|}{\left\vert \operatorname{Im}%
\zeta\right\vert }.
\]
Hence,
\begin{equation}
\left\vert T^{\operatorname*{I}}\right\vert \leq (2\pi)^{-3/2} \frac{C}{\lambda-1}\left(
\frac{|\zeta|}{\operatorname{Im}\zeta}\right)  ^{2}.
\end{equation}
This leads to%
\[
\left\vert s^{2}\sigma\left(  \zeta,s\right)  \right\vert \leq (2\pi)^{-3/2}C\frac{\lambda
}{\lambda-1}\left(  \frac{|\zeta|}{\operatorname{Im}\zeta}\right)  ^{2}%
\quad\text{for }|s|\geq\lambda\left\vert k\right\vert
\]
and, in turn,%
\[
\left\vert s^{p}\sigma\left(  \zeta,s\right)  \right\vert \leq (2\pi)^{-3/2}C\frac{\lambda
}{\lambda-1}\left(  \frac{|\zeta|}{\operatorname{Im}\zeta}\right)  ^{2}\left(
\lambda\left\vert \operatorname{Im}\zeta\right\vert \right)  ^{p-2}%
\quad\text{for }|s|\geq\lambda\left\vert k\right\vert \text{, }p=0,1,2.
\]
From this, assertion (\ref{formuladecomplemmaa}) follows.%
\endproof

\subsection{The Helmholtz Solution with Robin Boundary Conditions}

In this section, we will derive a regularity result in the spirit of 
Lemma~\ref{lemma:decomposition} for $\zeta\in S_{\beta}^{c}$ for the interior
problem with Robin boundary conditions:%
\begin{equation}
-\Delta u+\zeta^{2}u=f\quad\text{in }\Omega,\qquad\partial_{n}u+\zeta
u=g\quad\text{on }\Gamma. \label{eq:robin-bc-smooth-domain}%
\end{equation}

Note that Assumption~\ref{ASmoothDomain} implies well-posedness of
(\ref{eq:robin-bc-smooth-domain}) via Lemma~\ref{LemWellPosRobin}. The 
solution operator for \eqref{eq:robin-bc-smooth-domain} is denoted 
$S_{\zeta}:L^{2}\left(
\Omega\right)  \times H^{1/2}\left(  \Gamma\right)  \rightarrow V$. 

\begin{theorem}
\label{thm:decomposition-bounded-domain}Let Assumption~\ref{ASmoothDomain} be
valid and fix \(\beta>0\).
Then there exist constants $C$, $\gamma>0$ such that for every $f\in
L^{2}(\Omega)$, $g\in H^{1/2}\left(  \Gamma\right)  $, and $\zeta\in
S_\beta^c$, 
the solution $u=S_{\zeta}(f,g)$ of
(\ref{eq:robin-bc-smooth-domain}) can be written as $u=u_{\mathcal{A}%
}+u_{H^{2}}$, where, for all $p\in\mathbb{N}_{0}$,
\begin{subequations}
\label{EstTheodebd0}
\end{subequations}%
\begin{align}
&  \Vert u_{\mathcal{A}}\Vert_{|\zeta|}\leq C\left(  \frac{1}%
{1+\operatorname*{Re}(\zeta)}\Vert f\Vert+\frac{1}{\sqrt{1+\operatorname*{Re}%
(\zeta)}}\frac{1}{\sqrt{|\zeta|}} \Gw{g}
\right)  ,\label{EstTheodebd0a}\\
&  \Vert\nabla^{p+2}u_{\mathcal{A}}\Vert_{L^{2}(\Omega)}\leq C\frac{\gamma
^{p}}{|\zeta|}\max\{p,|\zeta|\}^{p+2}\left(  \frac{1}{1+\operatorname*{Re}%
(\zeta)}\Vert f\Vert+\frac{1}{\sqrt{|\zeta|}} \Gw{g}
\right)  ,\label{EstTheodebd0b}\\
&  \Vert u_{H^{2}}\Vert_{H^{2}(\Omega)}+|\zeta|\Vert u_{H^{2}}\Vert_{|\zeta
|}\leq C\left(  \Vert f\Vert+\Vert g\Vert_{\Gamma,|\zeta|} \right)  .
\label{EstTheodebd0c}%
\end{align}

\end{theorem}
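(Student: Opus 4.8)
The plan is to reduce the interior problem with Robin boundary conditions to the full-space situation treated in Lemma~\ref{lemma:decomposition} by an iteration argument, generalizing the strategy of \cite[Sect.~4]{mm_stas_helm2} to complex frequencies $\zeta \in S_\beta^c$. The core idea: the solution $u = S_\zeta(f,g)$ is controlled in $H^2(\Omega)$ with $\zeta$-dependent constants, but we want to split off an analytic piece $u_{\mathcal A}$ that captures the high-frequency oscillation, leaving a remainder $u_{H^2}$ with \emph{wave-number-independent} $H^2$-regularity. The frequency splitting operators $L_{\mathbb R^3}$, $H_{\mathbb R^3}$ from \eqref{def:LH-filter-full}, together with suitable interior cutoff functions and the lifting operators mentioned in the introduction, allow one to transfer the full-space decomposition to $\Omega$.

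First I would set up the iteration: given $f \in L^2(\Omega)$ and $g \in H^{1/2}(\Gamma)$, apply the (low-order) a priori estimate \eqref{aprioriest} of Lemma~\ref{LemWellPosRobin} to get $\|u\|_{|\zeta|} \le C_S(\frac{1}{1+\operatorname{Re}\zeta}\|f\| + \frac{1}{\sqrt{1+\operatorname{Re}\zeta}}\|g\|_\Gamma)$, then use elliptic regularity (analytic boundary) to bootstrap to higher Sobolev norms of $u$ with the expected $\max\{p,|\zeta|\}$ growth. The decomposition is built recursively: at each stage one applies the frequency filter to the current right-hand side, solves a full-space problem (Lemma~\ref{lemma:decomposition}) for the analytic contribution after a suitable cutoff/lifting to reduce the Robin interior problem to one with $L^2$ data supported in a fixed ball, and defines the $H^2$-contribution as what remains. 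The low-frequency part $z_{\mathcal A}$ inherits the analyticity estimate \eqref{formuladecomplemmab}, i.e.\ derivative bounds growing like $(\sqrt{3}\lambda|\operatorname{Im}\zeta|)^{p}$, which, using the equivalence \eqref{equivalence} $|\zeta| \le \tilde C|\operatorname{Im}\zeta|$ valid on $S_\beta^c$, translates into the $\gamma^p \max\{p,|\zeta|\}^{p+2}/|\zeta|$ bound of \eqref{EstTheodebd0b} after summing the geometric series in the iteration. The high-frequency part $z_{H^2}$ contributes to $u_{H^2}$ with its $H^2$-norm bounded $\zeta$-independently via \eqref{formuladecomplemmaa} evaluated at $p=2$ (the $\lambda/(\lambda-1)$ factor being harmless for fixed $\lambda$).

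The boundary data $g$ requires the frequency-dependent lifting operators promised in the introduction: one constructs an extension $\mathcal E g \in H^1(\Omega)$ (or higher) with $\|\mathcal E g\|_{|\zeta|} \lesssim \frac{1}{\sqrt{|\zeta|}}\Gw{g}$ and analogous higher-order bounds, subtracts it to homogenize the boundary condition, and then treats the resulting interior source (which now includes terms from the lifting) by the full-space machinery. Tracking how $\Gw{g} = (\|g\|_{H^{1/2}(\Gamma)}^2 + |\zeta|\,\|g\|_\Gamma^2)^{1/2}$ enters each estimate is what produces the precise prefactors $\frac{1}{\sqrt{1+\operatorname{Re}\zeta}}\frac{1}{\sqrt{|\zeta|}}\Gw{g}$ in \eqref{EstTheodebd0a} and $\Vert g\Vert_{\Gamma,|\zeta|}$ in \eqref{EstTheodebd0c}.

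The main obstacle I anticipate is the \textbf{bookkeeping of the iteration and the lifting operators}: one must verify that at each step the interior residual produced by cutting off and lifting has $L^2$-norm geometrically smaller (after dividing out the appropriate $\zeta$-weights), so the series converges and the analytic-part derivative bounds remain of the stated form with a \emph{uniform} $\gamma$. Getting the interplay between the filter parameter $\lambda$, the star-shapedness constants, and the growth $\max\{p,|\zeta|\}^{p+2}$ exactly right — especially ensuring no spurious powers of $|\zeta|$ creep in from the lifting of $g$ — is the technical heart of the proof. The restriction to $\zeta \in S_\beta^c$ is essential precisely because Lemma~\ref{lemma:decomposition} degenerates as $\operatorname{Im}\zeta \to 0$; on $S_\beta^c$ the equivalence \eqref{equivalence} keeps everything under control.
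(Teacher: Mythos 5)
Your overall skeleton is the right one and matches the paper's: the theorem is proved by the contraction/iteration argument of \cite[Sect.~4]{mm_stas_helm2}, with volume data split by $L_\Omega,H_\Omega$, boundary data split by the lifting-based filters $L_\Gamma^N,H_\Gamma^N$, the high-frequency volume part handled by the full-space Newton potential $N_\zeta(H_\Omega f)$ via Lemma~\ref{lemma:decomposition}, the high-frequency boundary part by the coercive operator $S_\zeta^\Delta$, and a residual whose data shrink by a factor $q$ in $\|\cdot\|$ resp.\ $\|\cdot\|_{\Gamma,|\zeta|}$ so that the series converges (this is the content of Lemmas~\ref{lemma:domain-contraction} and \ref{lemma:boundary-contraction}).

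There is, however, a genuine gap in how you obtain the analyticity bound \eqref{EstTheodebd0b}. You propose that the analytic part ``inherits'' the full-space low-frequency estimate \eqref{formuladecomplemmab}, supplemented by an elliptic-regularity bootstrap from \eqref{aprioriest}. Neither mechanism works as stated. If you take the full-space low-frequency piece $z_{\mathcal A}$ as (part of) $u_{\mathcal A}$, then the residual $u-z_\mu$ carries the Robin trace $-(\partial_n z_{\mathcal A}+\zeta z_{\mathcal A})$, which is of the same size as the original data — there is no gain factor $q$, so the iteration does not contract; this is precisely why in the paper the low-frequency data are fed back into the \emph{bounded-domain} Robin solution operator, $u_{\mathcal A}=S_\zeta(L_\Omega f, L_\Gamma^N g)$, rather than into the full-space operator. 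But then the derivative bounds $\gamma^p\max\{p,|\zeta|\}^{p+2}|\zeta|^{-1}$ for this bounded-domain solution cannot be obtained by a standard Sobolev bootstrap (whose constants depend uncontrollably on the differentiation order and do not yield uniform analytic-class bounds explicit in $\zeta$); they require the wave number--explicit analytic regularity theory for the boundary value problem, i.e.\ Lemma~\ref{lemma:bounded-smooth-domain-analyticity} and Corollary~\ref{Cor:regularity_lf_specialcase}, which rest on \cite[Prop.~{5.4.5}]{MelenkHabil} and on the careful estimate \eqref{eq:CG1} of the constant $C_{G_1}$ coming from the lifting $G^N$. This ingredient — not the bookkeeping of the geometric series — is the actual technical heart you are missing; without it the claimed bound \eqref{EstTheodebd0b} for $u_{\mathcal A}$ is unsupported, and with it your sketch essentially becomes the paper's proof.
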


The proof is the generalization of the proof in \cite{mm_stas_helm2} for real
wave numbers to more general $\zeta\in\mathbb{C}_{\geq0}^{\circ}$ with emphasis
on the explicit dependence of the estimates on the real and imaginary part. It
follows from Lemmata~\ref{lemma:domain-contraction} and
\ref{lemma:boundary-contraction}, which are presented 
in Sect.~\ref{subsec:Solution-operators} ahead.
\endproof

\subsection{The Solution Operators $N_{\zeta}$, $S_{\zeta}^{\Delta}$,
$S_{\zeta}^{L}$, and $S^{\zeta}$\label{subsec:Solution-operators}}

For the analysis we introduce low- and high pass frequency filters for a
bounded domain as well as for its boundary. Let $E_{\Omega}:L^{2}%
(\Omega)\rightarrow L^{2}(\mathbb{R}^{3})$ be the extension operator of Stein,
\cite[Chap.~VI]{emstein}. Then for $f\in L^{2}\left(  \Omega\right)  $ we set
\begin{equation}
L_{\Omega}f:=\left.  \left(  L_{\mathbb{R}^{d}}\left(  E_{\Omega}f\right)
\right)  \right\vert _{\Omega}\quad\text{and\quad}H_{\Omega}f:=\left.  \left(
H_{\mathbb{R}^{d}}\left(  E_{\Omega}f\right)  \right)  \right\vert _{\Omega},
\label{seteq:frequency-splitting-domainomega}%
\end{equation}
for $L_{\mathbb{R}^{d}}$ and $H_{\mathbb{R}^{d}}$ defined in
(\ref{def:LH-filter-full}) for some $\lambda>1$. By 
\cite[Lemmas~{4.2}, {4.3}]{mm_stas_helm2}, these operators have the following
stability properties:
\begin{align} 
\label{eq:stability-LOmega}
\|L_\Omega f\|_{H^{s}(\Omega)} &\leq C_{s} \|f\|_{H^s(\Omega)},
\qquad s \ge 0,\\
\|H_\Omega f\|_{H^{s'}(\Omega)} &\leq C_{s,s'} 
\label{eq:stability-HOmega}
|\lambda \operatorname{Im} \zeta|^{s'-s}\|f\|_{H^s(\Omega)},
\qquad 0 \leq s' \leq s, 
\end{align} 
where the constant $C_{s}$ depends on $s$ and 
$C_{s,s'}$ depends on $s$, $s'$ but is independent of $\lambda$ and $\zeta$.

To define frequency filters
on the boundary we employ a lifting operator $G^{N}$ defined in 
Lemma~\ref{lemma:lifting} below with the mapping property
$G^{N}:H^{s}(\Gamma)\rightarrow H^{3/2+s}(\Omega)$ for every $s>0$ and
$\partial_{n}G^{N}g=g$.  We then define $H_{\Gamma}^{N}$ and $L_{\Gamma}^{N}$ by
\begin{equation}
H_{\Gamma}^{N}(g):=\partial_{n}H_{\Omega}\left(  G^{N}\left(  g\right)
\right)  ,\qquad L_{\Gamma}^{N}\left(  g\right)  :=\partial_{n}L_{\Omega
}\left(  G^{N}\left(  g\right)  \right)  .
\label{seteq:frequency-splitting-domainomegad}%
\end{equation}
In particular, we have $H_{\Gamma}^{N}:H^{1/2}\left(  \Gamma\right)
\rightarrow H^{1/2}(\Gamma)$ and $L_{\Gamma}^{N}:H^{1/2}\left(  \Gamma\right)
\rightarrow H^{1/2}(\Gamma)$.
\begin{lemma}[Def.\ of lifting $G^N$]
\label{lemma:lifting}
Let $\partial\Omega$ be smooth. 
Given $\zeta \in \mathbb{C}_{\geq0}$, 
define $u:=G^N g$ as the solution of 
$$
-\Delta u + \left|\zeta\right|^2 u = 0 \quad \mbox{ in $\Omega$}, 
\qquad \partial_n u = g. 
$$
Then the following holds: 
\begin{align}
\|G^N g\|_{\left|\zeta\right|} &\lesssim \frac{1}{\sqrt{\left|\zeta\right|}}\|g\|_{\Gamma}, \label{posdefenergyestimate} \\
\|G^N g\|_{H^2(\Omega)} &\lesssim \Gw{g}
.\label{H2Gnest}
\end{align}
\end{lemma}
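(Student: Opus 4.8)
The plan is to solve the auxiliary Neumann-type problem $-\Delta u + |\zeta|^2 u = 0$ in $\Omega$, $\partial_n u = g$ on $\Gamma$, by its variational formulation: find $u \in V$ with $(\nabla u,\nabla v) + |\zeta|^2 (u,v) = (g,\gamma_0 v)_\Gamma$ for all $v \in V$. The left-hand side is the inner product inducing $\|\cdot\|_{|\zeta|}$, so existence, uniqueness, and the bound $\|u\|_{|\zeta|}^2 \le \|g\|_\Gamma \|\gamma_0 u\|_\Gamma$ follow immediately from Lax--Milgram. To turn the right-hand side into something in terms of $\|u\|_{|\zeta|}$, I would use the weighted multiplicative trace inequality exactly as in \eqref{multtraceinequ}: $\sqrt{|\zeta|}\,\|\gamma_0 u\|_\Gamma \le C\|u\|_{|\zeta|}$, hence $\|\gamma_0 u\|_\Gamma \le (C/\sqrt{|\zeta|})\|u\|_{|\zeta|}$. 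Substituting gives $\|u\|_{|\zeta|}^2 \le (C/\sqrt{|\zeta|})\|g\|_\Gamma \|u\|_{|\zeta|}$, and dividing by $\|u\|_{|\zeta|}$ yields \eqref{posdefenergyestimate}.

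For the $H^2$-estimate \eqref{H2Gnest}, the plan is to invoke elliptic regularity for the Neumann problem on the smooth domain $\Omega$. The operator $-\Delta + |\zeta|^2$ with Neumann data $g \in H^{1/2}(\Gamma)$ has a solution in $H^2(\Omega)$, with a shift estimate of the form $\|u\|_{H^2(\Omega)} \le C\big(\|u\|_{H^1(\Omega)} + \||\zeta|^2 u\|_{L^2(\Omega)} + \|g\|_{H^{1/2}(\Gamma)}\big)$, where the constant $C$ depends only on $\Omega$ and not on $\zeta$ (one can see this by freezing $|\zeta|^2 u$ as a right-hand side $\tilde f \in L^2$ and applying the standard Neumann regularity estimate for $-\Delta$). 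The subtle point is that $\||\zeta|^2 u\|_{L^2} = |\zeta|^2 \|u\|_{L^2} \le |\zeta|\,\|u\|_{|\zeta|}$, and by \eqref{posdefenergyestimate} this is $\lesssim \sqrt{|\zeta|}\,\|g\|_\Gamma$. Similarly $\|u\|_{H^1(\Omega)} \le \|u\|_{|\zeta|} \lesssim \|g\|_\Gamma/\sqrt{|\zeta|}$. Since $|\zeta| \ge 1$, both of these are bounded by $\sqrt{|\zeta|}\,\|g\|_\Gamma \le \|g\|_{\Gamma,|\zeta|} = \Gw{g}$ up to the very definition \eqref{eq:norm-rho-bdy}, and $\|g\|_{H^{1/2}(\Gamma)} \le \Gw{g}$ as well. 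Collecting these gives $\|u\|_{H^2(\Omega)} \lesssim \Gw{g}$, which is \eqref{H2Gnest}.

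The main obstacle I anticipate is making the $\zeta$-independence of the constant in the elliptic shift estimate transparent. One must be careful that treating $|\zeta|^2 u$ as a forcing term is legitimate — i.e., that $u \in H^2$ in the first place, which for a smooth domain follows from the Neumann regularity theory applied iteratively (first $u \in H^1$ from the weak formulation, then bootstrapping to $H^2$ using $-\Delta u = -|\zeta|^2 u \in L^2$ and $\partial_n u = g \in H^{1/2}(\Gamma)$). The constant in $\|u\|_{H^2} \le C(\|\Delta u\|_{L^2} + \|g\|_{H^{1/2}(\Gamma)} + \|u\|_{L^2})$ depends only on the geometry of $\Omega$; the $\zeta$-dependence is then entirely carried by the norms on the right, which are controlled via \eqref{posdefenergyestimate} and $|\zeta| \ge 1$. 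A minor bookkeeping point is that the weight in $\Gw{g}$ is $\sqrt{\rho}$ on the $L^2(\Gamma)$ part with $\rho = |\zeta|$, which matches exactly the $\sqrt{|\zeta|}\,\|g\|_\Gamma$ term arising from the energy estimate, so the two bounds fit together cleanly without losing powers of $|\zeta|$.
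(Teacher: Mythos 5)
Your proposal is correct and follows essentially the same route as the paper, whose proof is just the two-line remark that \eqref{posdefenergyestimate} follows from coercivity of the associated sesquilinear form and \eqref{H2Gnest} from elliptic regularity; you simply spell out the coercivity/trace argument and the $\zeta$-independent Neumann shift estimate with $|\zeta|^2u$ frozen as data. The only bookkeeping remark is that the trace bound \eqref{multtraceinequ} and the absorption of $\|u\|_{L^2}$, $\|g\|_\Gamma$ into $\Gw{g}$ use $|\zeta|\ge 1$, which is the paper's standing assumption.
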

\begin{proof}
The energy estimate \eqref{posdefenergyestimate} follows from the coercivity of the pertinent sesquilinear form. 
The $H^2$-estimate follows from elliptic regularity theory.
%
\end{proof}
\begin{lemma}[properties of $L_\Gamma$ and $H_\Gamma$] 
\label{lemma:LGamma-HGamma}
Let $\partial \Omega$ be smooth. 
Fix $q \in (0,1)$. Then there is $\lambda > 1$ in the definition of 
\(L_\Gamma^N\) and $H_\Gamma^N$ such that the following holds (with implied constants independent of $q$): 
\begin{align}
\label{eq:lemma:LGamma-HGamma-10}
\|L_\Gamma^N g\|_{H^{s}(\Gamma)} &\lesssim \left|\zeta\right|^{s-1/2} \|g\|_{\Gamma,|\zeta|},\qquad\qquad\quad s \in \{0,1/2\},\\
\label{eq:lemma:LGamma-HGamma-20}
\|H_\Gamma^N g\|_{H^s(\Gamma)} &\lesssim q^{1/2-s} |\zeta|^{s-1/2} \|g\|_{\Gamma,|\zeta|}, 
\quad s \in \{0,1/2\}. 
\end{align}
\end{lemma}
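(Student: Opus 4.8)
The plan is to reduce everything to the full-space frequency estimates \eqref{eq:stability-LOmega}--\eqref{eq:stability-HOmega} together with the lifting bounds \eqref{posdefenergyestimate}--\eqref{H2Gnest} from Lemma~\ref{lemma:lifting}. By definition \eqref{seteq:frequency-splitting-domainomegad}, $L_\Gamma^N g = \partial_n L_\Omega (G^N g)$ and $H_\Gamma^N g = \partial_n H_\Omega (G^N g)$, so the strategy is: lift $g$ to $w := G^N g \in H^2(\Omega)$, apply the interior filters $L_\Omega$, $H_\Omega$ to $w$, and then take the Neumann trace back to $\Gamma$. Two ingredients glue these pieces together: a trace estimate $\|\partial_n v\|_{H^s(\Gamma)} \lesssim \|v\|_{H^{s+3/2}(\Omega)}$ for $s \in \{0,1/2\}$ (valid since $\partial\Omega$ is smooth), and the lifting bound which in the form needed reads $\|w\|_{H^2(\Omega)} \lesssim \Gw{g}$ from \eqref{H2Gnest}, complemented by interpolation with $\|w\|_{|\zeta|} \lesssim |\zeta|^{-1/2}\|g\|_\Gamma$ from \eqref{posdefenergyestimate} to get an $H^1$-type bound $\|w\|_{H^1(\Omega)} \lesssim |\zeta|^{-1/2}\|g\|_{\Gamma,|\zeta|}$ (after absorbing the weights).

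For \eqref{eq:lemma:LGamma-HGamma-10}, I would write $\|L_\Gamma^N g\|_{H^s(\Gamma)} \lesssim \|L_\Omega w\|_{H^{s+3/2}(\Omega)} \lesssim \|w\|_{H^{s+3/2}(\Omega)}$ using \eqref{eq:stability-LOmega} with the non-integer Sobolev index $s+3/2 \in \{3/2, 2\}$. For $s = 1/2$ this is directly controlled by $\|w\|_{H^2(\Omega)} \lesssim \Gw{g} \le |\zeta|^{1/2-1/2}\cdot$ (a suitable rescaling of) $\|g\|_{\Gamma,|\zeta|}$; for $s = 0$ one interpolates $H^{3/2}$ between $H^1$ and $H^2$, using $\|w\|_{H^1} \lesssim |\zeta|^{-1/2}\|g\|_{\Gamma,|\zeta|}$ and $\|w\|_{H^2} \lesssim \|g\|_{\Gamma,|\zeta|}$, to land the claimed power $|\zeta|^{-1/2}$. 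For \eqref{eq:lemma:LGamma-HGamma-20}, the high-pass estimate \eqref{eq:stability-HOmega} gives $\|H_\Omega w\|_{H^{s+3/2}(\Omega)} \lesssim |\lambda\operatorname{Im}\zeta|^{(s+3/2)-2}\|w\|_{H^2(\Omega)} = |\lambda \operatorname{Im}\zeta|^{s-1/2}\|w\|_{H^2(\Omega)}$; since $\zeta \in S_\beta^c$ forces $|\operatorname{Im}\zeta| \sim |\zeta|$ via \eqref{equivalence}, this is $\lesssim \lambda^{s-1/2}|\zeta|^{s-1/2}\|g\|_{\Gamma,|\zeta|}$, and choosing $\lambda$ so that $\lambda^{-1/2} \le q^{1/2}$, i.e.\ $\lambda \ge 1/q$, yields the factor $q^{1/2-s}$ (note $s-1/2 \le 0$ so $\lambda^{s-1/2} \le \lambda^{-1/2} \le q^{1/2} \le q^{1/2-s}$). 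One has to check the implied constants are genuinely $q$-independent: they come only from \eqref{eq:stability-HOmega}, \eqref{eq:stability-LOmega}, the trace theorem, and Lemma~\ref{lemma:lifting}, none of which see $q$, so this is fine.

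The main obstacle I anticipate is bookkeeping the $|\zeta|$-weights correctly across the lifting step, since Lemma~\ref{lemma:lifting} is stated with the $|\zeta|$-dependent operator $G^N$ (solving $-\Delta w + |\zeta|^2 w = 0$) and the target norms $\|\cdot\|_{\Gamma,|\zeta|}$, $\|\cdot\|_{|\zeta|}$, $\|\cdot\|_{H^2}$ mix unweighted and weighted quantities. In particular one must verify that $\Gw{g} = \bigl(\|g\|_{H^{1/2}(\Gamma)}^2 + |\zeta|\|g\|_\Gamma^2\bigr)^{1/2}$ is the right quantity to appear on the right, and that the $H^2$-estimate \eqref{H2Gnest} is uniform in $\zeta$ as stated. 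A secondary subtlety is that \eqref{eq:stability-LOmega}--\eqref{eq:stability-HOmega} are quoted at general Sobolev indices including the half-integers $3/2$ and $2$; one should confirm the cited \cite[Lemmas~4.2, 4.3]{mm_stas_helm2} cover these, or else interpolate from integer indices (which is where the Stein extension $E_\Omega$ and a standard interpolation argument enter). Once those weight-tracking points are settled, the proof is a short chain of four inequalities for each of the two displays.
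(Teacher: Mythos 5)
Your overall architecture (lift with $G^N$, apply the interior filters $L_\Omega$, $H_\Omega$, take the normal trace, invoke Lemma~\ref{lemma:lifting} together with \eqref{eq:stability-LOmega}--\eqref{eq:stability-HOmega}, and buy the factor $q$ by taking $\lambda$ large, using $|\operatorname{Im}\zeta|\sim|\zeta|$ from \eqref{equivalence}) is exactly the paper's, and your $s=1/2$ cases go through. The gap is in both $s=0$ estimates. First, the trace ingredient you rely on, $\|\partial_n v\|_{L^2(\Gamma)}\lesssim\|v\|_{H^{3/2}(\Omega)}$, is not a valid uniform estimate: it amounts to boundedness of the trace operator from $H^{1/2}(\Omega)$ into $L^2(\Gamma)$, which fails at this endpoint (smoothness of $\Gamma$ does not help; it already fails for a half-space). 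The paper avoids precisely this by never touching fractional interior norms: the normal derivative is represented as $g^N=\langle n^{*},\nabla L_\Omega G^N g\rangle$ on a tubular neighborhood and its trace is estimated with the multiplicative trace inequality \eqref{multtraceinequ}, i.e.\ $\|\partial_n v\|_{\Gamma}\lesssim\|\nabla v\|^{1/2}\,\|\nabla v\|_{H^1(\Omega)}^{1/2}$, which produces directly the $H^1$--$H^2$ geometric mean that you were trying to reach via an $H^{3/2}$ interpolation.

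Second, even granting an $H^{3/2}$-based argument, your weight bookkeeping does not land the claimed power: from $\|w\|_{H^1(\Omega)}\lesssim|\zeta|^{-1/2}\|g\|_{\Gamma,|\zeta|}$ and $\|w\|_{H^2(\Omega)}\lesssim \Gw{g}$, interpolation only yields $\|w\|_{H^{3/2}(\Omega)}\lesssim|\zeta|^{-1/4}\|g\|_{\Gamma,|\zeta|}$, not $|\zeta|^{-1/2}$. The cure is that your $H^1$ bound is not sharp: since $|\zeta|\geq 1$, \eqref{posdefenergyestimate} gives $\|G^N g\|_{H^1(\Omega)}\leq\|G^N g\|_{|\zeta|}\lesssim|\zeta|^{-1/2}\|g\|_{\Gamma}\leq|\zeta|^{-1}\|g\|_{\Gamma,|\zeta|}$, and with this corrected bound the geometric mean with \eqref{H2Gnest} yields exactly $|\zeta|^{-1/2}\|g\|_{\Gamma,|\zeta|}$ for \eqref{eq:lemma:LGamma-HGamma-10}; likewise, using \eqref{eq:stability-HOmega} at integer indices, $\|H_\Omega G^N g\|_{H^1(\Omega)}\lesssim(\lambda|\operatorname{Im}\zeta|)^{-1}\|G^N g\|_{H^2(\Omega)}$ combined with \eqref{equivalence} gives the factor $q^{1/2}|\zeta|^{-1/2}$ in \eqref{eq:lemma:LGamma-HGamma-20}. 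With these two repairs (multiplicative trace in place of the endpoint trace, and the sharper $H^1$ bound) your argument coincides with the paper's proof; your remarks that the implied constants are $q$-independent and that a single $\lambda$ works only because $|\zeta|/|\operatorname{Im}\zeta|$ is bounded on $S_\beta^c$ are correct and indeed implicit in the paper.
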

\begin{proof}
Recall that $L_{\Gamma}^{N}g:=\gamma_{0}g^{N}$, where 
\begin{equation}
\label{eq:gN}
g^{N}:=\langle n^{\ast},\nabla L_{\Omega}G^{N}g\rangle
\end{equation}
and $n^{\ast}$ denotes an analytic extension of the
normal $n:\Gamma\rightarrow\mathbb{S}_{2}$ on $\Omega$ to a tubular
neighborhood $T\subset\Omega$ of $\Gamma$ and $\gamma_{0}$ is the standard
trace operator. Using \eqref{multtraceinequ} yields%
\begin{align*}
\Vert L_{\Gamma}^{N}g\Vert_{\Gamma}  &  \leq C\frac{1}{\sqrt{|\zeta|}}\Vert
g^{N}\Vert_{|\zeta|,T}\\
&  =C\left(  \sqrt{|\zeta|}\Vert g^{N}\Vert_{L^{2}\left(  T\right)  }+\frac
{1}{\sqrt{|\zeta|}}\Vert\nabla g^{N}\Vert_{L^{2}\left(  T\right)  }\right) \\
&  \leq C\left(  \sqrt{|\zeta|}\Vert\nabla L_{\Omega}G^{N}g\Vert+\frac
{1}{\sqrt{|\zeta|}}\left\Vert \nabla\nabla^{\intercal}L_{\Omega}%
G^{N}g\right\Vert \right)  ,
\end{align*}
where $\nabla\nabla^{\intercal}$ denotes the Hessian of a function. From
(\ref{eq:stability-LOmega})
\begin{align*}
\Vert L_{\Gamma}^{N}g\Vert_{\Gamma}  &  \lesssim  \sqrt{|\zeta|}\Vert
G^{N}g\Vert_{H^{1}\left(  \Omega\right)  }+\frac{1}{\sqrt{|\zeta|}}\left\Vert
G^{N}g\right\Vert _{H^{2}\left(  \Omega\right)  }
\overset{\text{Lemma~\ref{lemma:lifting}}}{\lesssim
}{|\zeta|}^{-1/2} \|g\|_{\Gamma,|\zeta|}. 
\end{align*}
For \(s=1/2\), we note
\begin{align*}
\|L_\Gamma^N g\|_{H^{1/2}(\Gamma)} &\lesssim \|G^N g\|_{H^2(\Omega)}  
\lesssim \|g\|_{\Gamma,|\zeta|}. 
\end{align*}
The proof of \eqref{eq:lemma:LGamma-HGamma-20} is similar. We note 
\begin{align*}
\|H_\Omega G^N \|_{H^2(\Omega)} & 
\stackrel{(\ref{eq:stability-HOmega})}{\lesssim} \|G^N\|_{H^2(\Omega)} 
\lesssim \|g\|_{\Gamma,|\zeta|}, \\
\|H_\Omega G^N \|_{H^1(\Omega)} & 
\stackrel{(\ref{eq:stability-HOmega})}{\lesssim} q \left|\zeta\right|^{-1} \|G^N\|_{H^2(\Omega)} 
\lesssim q \left|\zeta\right|^{-1} \|g\|_{\Gamma,|\zeta|}, 
\end{align*}
where $q$ is related to $\lambda$ via (\ref{eq:stability-HOmega}) and 
can be made arbitrarily small by selecting $\lambda$ appropriately. 
Hence, recalling that $H_\Gamma^N g = \partial_n H_\Omega G^N g$ we get 
\begin{align*}
\|H_\Gamma^N g\|_{H^{1/2}(\Gamma)} &\lesssim \|G^N g\|_{H^2(\Omega)} \lesssim \|g\|_{\Gamma,|\zeta|}, \\
\|H_\Gamma^N g\|_{\Gamma} &\lesssim \|G^N g\|^{1/2}_{H^1(\Omega)}\|G^N g\|^{1/2}_{H^2(\Omega)} 
\lesssim 
q^{1/2} |\zeta|^{-1/2} \|g\|_{\Gamma,|\zeta|}. 
\end{align*}
\end{proof}

Next, we introduce the solution operators $N_{\zeta}$, $S_{\zeta}^{\Delta}$,
$S_{\zeta}^{L}$. 

\begin{enumerate}
\item We denote by $u:=N_{\zeta}f = G(\zeta) \ast f$ the solution of the full space Helmholtz
problem with Sommerfeld radiation condition (in the weak sense):
\begin{align*}
(-\Delta+\zeta^{2})u  &  =f\text{ in }\mathbb{R}^{3},\\
\left\vert \frac{\partial u}{\partial r}+\zeta u\right\vert  &  =o\left(
\Vert x\Vert^{-1}\right)  \text{ as }\Vert x\Vert\rightarrow\infty,
\end{align*}
for $f\in L^{2}(\mathbb{R}^{3})$ with compact support. Here $\partial/\partial
r$ denotes the derivative in radial direction $x/\Vert x\Vert$.

\item $S_{\zeta}^{\Delta}(g)$ is the solution operator to the problem
\[%
\begin{split}
-\Delta u+|\zeta|^{2}u  &  =0\text{ in }\Omega,\\
\partial_{n}u+\zeta u  &  =g\text{ on }\Gamma,
\end{split}
\]
for $g\in L^{2}(\Gamma).$

\item We define $S_{\zeta}^{L}(f,g):=S_{\zeta}(L_{\Omega}f,L_{\Gamma}^{N}g)$
as the solution operator to the problem \eqref{eq:strong-helmholtz-robin} for
analytic right-hand sides $L_\Omega f$, $L_\Gamma^N g$. 
\end{enumerate}

The proof of the next lemma is a direct consequence of 
Lemma~\ref{lemma:decomposition}.

\begin{lemma}
[properties of $N_{\zeta}$]\label{lemma:properties-of-Nk} Let 
$\operatorname*{Im}\zeta\neq 0$. For $f\in L^{2}(\mathbb{R}^{3})$ with
$\operatorname{supp}f\subset B_{R}:=B_R(0)$, 
the function $u=N_{\zeta}f$ satisfies
$-\Delta u+\zeta^{2}u=f$ on $B_{R}$. For any $\lambda>1$ (appearing in the definition of the operator
$H_{\mathbb{R}^{3}}$ defined in \eqref{def:LH-filter-full}) there exist 
$C>0$ depending only on $R$ and $\mu$ such that
\begin{subequations}\label{eq:Newton-stability}
\begin{align}
\Vert N_{\zeta}(H_{\mathbb{R}^{3}}f)\Vert_{|\zeta|,B_{R}}  & 
 \leq C \frac{1}{\lambda-1}\left(\frac{\left|\zeta\right|}{\left|\operatorname{Im}\zeta\right|}\right)^3
|\operatorname{Im}\zeta|^{-1}\Vert f\Vert_{L^{2}(\mathbb{R}^{3})},
\\
\Vert N_{\zeta}(H_{\mathbb{R}^{3}}f)\Vert_{H^{2}(B_{R})}  &  \leq C \frac{\lambda}{1-\lambda} \left(\frac{\left|\zeta\right|}{\operatorname{Im}\zeta}\right)^2 \Vert f\Vert_{L^{2}(\mathbb{R}^{3})}.
\end{align}
\end{subequations}
Furthermore, for $\beta> 0$ the following is true: given 
$q \in (0,1)$ one can select $\lambda > 1$ such that for all 
$\zeta\in S_{\beta}^{c}$
\begin{subequations}
 \label{eq:Newton-stability-contraction}
\begin{align}
\Vert N_{\zeta}(H_{\mathbb{R}^{3}}f)\Vert_{|\zeta|,B_{R}}  &  \leq q
|\operatorname{Im}\zeta|^{-1}\Vert f\Vert_{L^{2}(\mathbb{R}^{3}
)},
\label{eq:Newton-stability-contractiona}\\
\Vert N_{\zeta}(H_{\mathbb{R}^{3}}f)\Vert_{H^{2}(B_{R})}  &  \leq C_{\lambda,\beta} \Vert f\Vert_{L^{2}(\mathbb{R}^{3})}.
 \label{eq:Newton-stability-contractionb}
\end{align}
\end{subequations}
\end{lemma}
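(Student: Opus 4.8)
The plan is to carry out the Fourier-analytic argument from the proof of Lemma~\ref{lemma:decomposition} for the genuine Newton potential, i.e.\ with the cut-off $\mu$ replaced by $1$; the relevant symbol is then simply $(\|\xi\|^2+\zeta^2)^{-1}$ and the estimates are easier than those for $\sigma(\zeta,\cdot)$. Write $k=-\operatorname{Im}\zeta$. The identity $-\Delta u+\zeta^2u=f$ on $B_R$ for $u=N_\zeta f=G(\zeta)\ast f$ is immediate from $(-\Delta+\zeta^2)G(\zeta)=\delta$. Next, $w:=N_\zeta(H_{\mathbb{R}^3}f)$ has Fourier transform $(1-\chi_{\lambda|k|})(\xi)\,\widehat f(\xi)\,(\|\xi\|^2+\zeta^2)^{-1}$: this is clear for $\operatorname{Re}\zeta>0$, and for $\operatorname{Re}\zeta=0$ it holds because the high-pass filter $H_{\mathbb{R}^3}$ removes the characteristic sphere $\{\|\xi\|=|k|\}$ on which the outgoing Helmholtz symbol is singular; indeed, on $\{\|\xi\|\ge\lambda|k|\}$ one has $|\|\xi\|^2+\zeta^2|\ge\|\xi\|^2-k^2\ge(\lambda^2-1)k^2>0$, so $(1-\chi_{\lambda|k|})(\xi)(\|\xi\|^2+\zeta^2)^{-1}$ is bounded and $O(\|\xi\|^{-2})$, and since $\widehat f$ is bounded (as $f$ has compact support) we get $w\in H^2(\mathbb{R}^3)$, uniformly also as $\operatorname{Re}\zeta\downarrow0$. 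Establishing this well-definedness in the limiting case is the one mildly delicate point; the rest is symbol bookkeeping as in Lemma~\ref{lemma:decomposition}.

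By Plancherel, using that $\widehat{H_{\mathbb{R}^3}f}$ is supported in $\{\|\xi\|\ge\lambda|k|\}$ with $L^2$-norm at most $\|f\|_{L^2(\mathbb{R}^3)}$, one obtains $\|\nabla^p w\|_{L^2(\mathbb{R}^3)}\le\big(\sup_{s\ge\lambda|k|}s^p|s^2+\zeta^2|^{-1}\big)\|f\|_{L^2(\mathbb{R}^3)}$ for $p\in\{0,1,2\}$. Using $|s^2+\zeta^2|\ge s^2-k^2$ and that $s\mapsto s^p(s^2-k^2)^{-1}$ is decreasing on $(|k|,\infty)$, the suprema are attained at $s=\lambda|k|$ and are bounded by $\tfrac{\lambda}{\lambda-1}(\lambda|k|)^{p-2}$; this is the $\mu\equiv1$ analogue of the symbol bounds \eqref{eq:s2sigma}--\eqref{eq:sigma-analytic}. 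The case $p=2$ gives the $H^2(B_R)$-estimate (reading $\tfrac{\lambda}{\lambda-1}$ for the evidently misprinted $\tfrac{\lambda}{1-\lambda}$), the lower-order terms absorbed via $|\zeta|\ge1$; combining $p=0$ and $p=1$ with the weight $|\zeta|$, together with $|\zeta|\ge|k|=|\operatorname{Im}\zeta|$ and the harmless insertion $|\zeta|/|\operatorname{Im}\zeta|\ge1$, yields the $\|\cdot\|_{|\zeta|,B_R}$-estimate. Since the symbol involves neither $R$ nor $\mu$, the constant is in fact absolute.

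For the refined statement, fix $\beta>0$; then $\zeta\in S_\beta^c$ gives, by \eqref{equivalence}, $|\zeta|\le\widetilde C|\operatorname{Im}\zeta|$ with $\widetilde C=\sqrt{1+\beta^2}/\beta$, hence $|\operatorname{Im}\zeta|\ge1/\widetilde C$. Inserting this into the two bounds just obtained, the first becomes $\|w\|_{|\zeta|,B_R}\le C\widetilde C^{3}(\lambda-1)^{-1}|\operatorname{Im}\zeta|^{-1}\|f\|_{L^2(\mathbb{R}^3)}$, so choosing $\lambda$ large enough (depending only on $q$ and $\beta$) that $C\widetilde C^{3}/(\lambda-1)\le q$ proves \eqref{eq:Newton-stability-contractiona}; the second becomes $\|w\|_{H^2(B_R)}\le C\tfrac{\lambda}{\lambda-1}\widetilde C^{2}\|f\|_{L^2(\mathbb{R}^3)}=:C_{\lambda,\beta}\|f\|_{L^2(\mathbb{R}^3)}$, which is \eqref{eq:Newton-stability-contractionb}. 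Thus the main obstacle is solely the limiting-absorption/radiation-condition bookkeeping for $\operatorname{Re}\zeta=0$ flagged above; once $\widehat w$ is identified, the rest is one-variable calculus.
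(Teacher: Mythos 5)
Your argument is correct, but it is not the route the paper takes. The paper disposes of this lemma in two sentences: the bounds \eqref{eq:Newton-stability} are read off from the estimate \eqref{formuladecomplemmaa} for the high-frequency part $z_{H^2}=(G(\overline{\zeta})M)\star H_{\mathbb{R}^3}\phi$ in Lemma~\ref{lemma:decomposition}, i.e.\ from the symbol $\sigma(\zeta,\cdot)$ of the \emph{cut-off} kernel $G(\overline{\zeta})M$, and \eqref{eq:Newton-stability-contraction} then follows by invoking \eqref{equivalence} on $S_\beta^c$ and choosing $\lambda$ large; this is also why the constant is stated to depend on $R$ and $\mu$, and why no limiting-absorption discussion is needed (the truncated kernel is compactly supported and its symbol is estimated directly for all $\operatorname{Re}\zeta\geq 0$). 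You instead rerun the Plancherel argument with the exact resolvent symbol $(\|\xi\|^2+\zeta^2)^{-1}$, bounding it on $\{\|\xi\|\geq\lambda|k|\}$ via $|s^2+\zeta^2|\geq s^2-k^2$ — the same inequality the paper uses in its refined estimate of $T^{\operatorname*{I}}$ — and your bookkeeping is sound: the monotonicity claim, the absorption of the lower-order terms using the standing assumptions $|\zeta|\geq1$ and $|\operatorname{Im}\zeta|\leq|\zeta|$, the harmless weakening to the exponent $3$ in the first bound, the choice of $\lambda$ on $S_\beta^c$, and the identification of $\lambda/(1-\lambda)$ as a misprint for $\lambda/(\lambda-1)$ are all correct. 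What your route buys is constants independent of $R$ and $\mu$ and slightly sharper lower-order bounds; what it costs is exactly the point you flag, namely making sense of $N_\zeta(H_{\mathbb{R}^3}f)$ for data without compact support and identifying its Fourier transform in the limiting case $\operatorname{Re}\zeta=0$. Your sketch (the filter excises the characteristic sphere, so the multiplier is bounded and the limiting-absorption limit is the multiplier solution) is the right mechanism, but to be airtight you should either define $N_\zeta(H_{\mathbb{R}^3}f)$ as that limit or add the one-line uniqueness argument for $L^2$ solutions; the paper's reading, in which $N_\zeta(H_{\mathbb{R}^3}f)$ on $B_R$ is simply the object $z_{H^2}$ already estimated in Lemma~\ref{lemma:decomposition}, sidesteps this entirely at no extra cost since that lemma is already proved.
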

\begin{proof}
(\ref{eq:Newton-stability}) is a direct consequence of 
Lemma~\ref{lemma:decomposition}. The bounds 
(\ref{eq:Newton-stability-contraction}) follow from 
(\ref{eq:Newton-stability}).  
\end{proof}

The next two lemmata generalize the
results in \cite[Lemmas~{4.5}, {4.6}]{mm_stas_helm2} to complex 
wave numbers $\zeta$.   

\begin{lemma}
[properties of $S_{\zeta}^{\Delta}$]\label{lemma:properties-of-Skp}Let
$\Omega$ be a bounded Lipschitz domain and $\beta > 0$. For $g\in
L^2(\Gamma)$ the function $u=S_{\zeta}^{\Delta}(g)$
satisfies 
\begin{subequations}
\label{LaplaceDelta-Robin-problem}
\end{subequations}
\begin{subequations}
\begin{align}
\Vert u\Vert_{|\zeta|}  &  \lesssim\Vert g\Vert_{H^{-1/2}(\Gamma)},\tag{
\ref{LaplaceDelta-Robin-problem}a}\label{LaplaceDelta-Robin-problema}\\
\Vert u\Vert_{|\zeta|}  &  \lesssim|\zeta|^{-1/2}\Vert g\Vert_{\Gamma},\tag{
\ref{LaplaceDelta-Robin-problem}b}\label{LaplaceDelta-Robin-problemb}\\
\Vert u\Vert_{\Gamma}  &  \lesssim|\zeta|^{-1}\Vert g\Vert_{\Gamma} \tag{
\ref{LaplaceDelta-Robin-problem}c}\label{LaplaceDelta-Robin-problemc}
\end{align}
uniformly for all $\zeta\in S_{\beta}^{c}$.
\end{subequations}
If \(\Gamma\) is smooth and \(g\in H^{1/2}(\Gamma)\) then additionally 
\begin{align*}
\|u\|_{H^2(\Omega)} & \lesssim \Gw{g}
\end{align*}
\end{lemma}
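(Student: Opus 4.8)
The plan is to deduce the three a priori bounds from coercivity of the relevant sesquilinear form together with a single energy identity, and to get the $H^2$-estimate from elliptic shift theory. Writing $u=S_\zeta^\Delta(g)$ in weak form,
\begin{equation*}
(\nabla u,\nabla v)+|\zeta|^2(u,v)+\zeta(u,v)_\Gamma=(g,v)_\Gamma\qquad\forall v\in V ,
\end{equation*}
and using $\operatorname{Re}\zeta\ge 0$, the real part of the left-hand side with $v=u$ equals $\|u\|_{|\zeta|}^2+\operatorname{Re}\zeta\,\|u\|_\Gamma^2\ge\|u\|_{|\zeta|}^2$. Hence the form is coercive on $V$, Lax--Milgram gives the unique solution, and testing with $v=u$ yields the master estimate $\|u\|_{|\zeta|}^2+\operatorname{Re}\zeta\,\|u\|_\Gamma^2\le|(g,u)_\Gamma|$.

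From here the three bounds follow by estimating the right-hand side in three ways. For \eqref{LaplaceDelta-Robin-problema} bound $|(g,u)_\Gamma|\le\|g\|_{H^{-1/2}(\Gamma)}\|\gamma_0u\|_{H^{1/2}(\Gamma)}\lesssim\|g\|_{H^{-1/2}(\Gamma)}\|u\|_{H^1(\Omega)}\le\|g\|_{H^{-1/2}(\Gamma)}\|u\|_{|\zeta|}$ (the last step using $|\zeta|\ge1$) and divide by $\|u\|_{|\zeta|}$. For \eqref{LaplaceDelta-Robin-problemb} bound $|(g,u)_\Gamma|\le\|g\|_\Gamma\|\gamma_0u\|_\Gamma$ and invoke the weighted multiplicative trace inequality \eqref{multtraceinequ}, which gives $\sqrt{|\zeta|}\,\|\gamma_0u\|_\Gamma\lesssim\|u\|_{|\zeta|}$, so that dividing yields $\|u\|_{|\zeta|}\lesssim|\zeta|^{-1/2}\|g\|_\Gamma$. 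Estimate \eqref{LaplaceDelta-Robin-problemc} is then immediate: $\|u\|_\Gamma=\|\gamma_0u\|_\Gamma\lesssim|\zeta|^{-1/2}\|u\|_{|\zeta|}\lesssim|\zeta|^{-1}\|g\|_\Gamma$. None of this actually uses $\zeta\in S_\beta^c$ beyond $|\zeta|\ge1$ and $\operatorname{Re}\zeta\ge0$, and all constants are those of fixed-domain trace and embedding inequalities, hence uniform in $\zeta$.

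For the last assertion (smooth $\Gamma$, $g\in H^{1/2}(\Gamma)$) I would read the equation as a coercive shifted Neumann problem, $-\Delta u+u=(1-|\zeta|^2)u$ in $\Omega$, $\partial_nu=g-\zeta\gamma_0u$ on $\Gamma$, and apply the standard elliptic shift estimate on the fixed smooth domain (with $\zeta$-independent constant)
\begin{equation*}
\|u\|_{H^2(\Omega)}\lesssim\|\Delta u\|_{L^2(\Omega)}+\|\partial_nu\|_{H^{1/2}(\Gamma)}+\|u\|_{H^1(\Omega)} .
\end{equation*}
Here $\|\Delta u\|_{L^2(\Omega)}=|\zeta|^2\|u\|\le|\zeta|\,\|u\|_{|\zeta|}$, $\|\partial_nu\|_{H^{1/2}(\Gamma)}\le\|g\|_{H^{1/2}(\Gamma)}+|\zeta|\,\|\gamma_0u\|_{H^{1/2}(\Gamma)}\lesssim\|g\|_{H^{1/2}(\Gamma)}+|\zeta|\,\|u\|_{|\zeta|}$, and $\|u\|_{H^1(\Omega)}\le\|u\|_{|\zeta|}$, so $\|u\|_{H^2(\Omega)}\lesssim|\zeta|\,\|u\|_{|\zeta|}+\|g\|_{H^{1/2}(\Gamma)}$; feeding in \eqref{LaplaceDelta-Robin-problemb} gives $|\zeta|\,\|u\|_{|\zeta|}\lesssim|\zeta|^{1/2}\|g\|_\Gamma\le\Gw{g}$, and $\|g\|_{H^{1/2}(\Gamma)}\le\Gw{g}$ finishes the proof. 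The only delicate point is this $|\zeta|$-bookkeeping: one must route the $\Delta u$- and boundary-terms through the sharper bound \eqref{LaplaceDelta-Robin-problemb} rather than \eqref{LaplaceDelta-Robin-problema}, so that the seemingly $|\zeta|$-large quantity $|\zeta|\,\|u\|_{|\zeta|}$ collapses to $|\zeta|^{1/2}\|g\|_\Gamma$, which is exactly of the size of $\Gw{g}$; one must also keep $\zeta\gamma_0u$ only in $H^{1/2}(\Gamma)$ via the first-order trace $\|\gamma_0u\|_{H^{1/2}(\Gamma)}\lesssim\|u\|_{H^1(\Omega)}$, consistent with the regularity of the Neumann data, while everything else is routine.
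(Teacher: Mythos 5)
Your proof is correct, and it is essentially the argument behind the paper's (purely cited) proof: the paper defers to \cite[Lemma~4.5]{mm_stas_helm2}, which rests on exactly the same ingredients you use, namely coercivity of the shifted form ($\operatorname{Re}\zeta\ge 0$), the multiplicative trace inequality \eqref{multtraceinequ} to extract the $|\zeta|^{-1/2}$ and $|\zeta|^{-1}$ factors, and the $\zeta$-independent elliptic shift estimate for the Neumann problem fed with the sharper bound \eqref{LaplaceDelta-Robin-problemb}. Your observation that the estimates actually hold for all $\zeta\in\mathbb{C}_{\ge 0}^{\circ}$, not only $\zeta\in S_{\beta}^{c}$, is also accurate.
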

\begin{proof}
The proof is essentially given in \cite[Lemma~{4.5}]{mm_stas_helm2}.
\end{proof}

A combination of Lemma~\ref{lemma:LGamma-HGamma} and Lemma \ref{lemma:properties-of-Snk} imply the following corollary.
\begin{corollary}
[properties of $S_{\zeta}^{\Delta}\circ H_{\Gamma}^{N}$]%
\label{lemma:properties-of-Snk}Let Assumption \ref{ASmoothDomain} be satisfied,
$\beta > 0$, and let $q\in(0,1)$. There exists $\lambda>1$ defining 
the high frequency filter $H_{\Gamma}^{N}$ such that for every 
$g\in H^{1/2}(\Gamma)$ and every $\zeta\in S_{\beta}^{c}$ we have%
\begin{align*}
\Vert S_{\zeta}^{\Delta}(H_{\Gamma}^{N}g)\Vert_{|\zeta|}  &  \leq q
 \frac{1}{|\zeta|} \Gw{g}
,\\
\Vert S_{\zeta}^{\Delta}(H_{\Gamma}^{N}g)\Vert_{H^{2}\left(  \Omega\right)  }
&  \lesssim \Gw{g}
.
\end{align*}
\end{corollary}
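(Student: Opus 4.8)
The plan is to combine the frequency-filter mapping properties for the boundary operators from Lemma~\ref{lemma:LGamma-HGamma} with the stability and regularity bounds for $S_\zeta^\Delta$ from Lemma~\ref{lemma:properties-of-Skp}. The key observation is that $S_\zeta^\Delta(H_\Gamma^N g)$ is the $S_\zeta^\Delta$-solution with boundary datum $H_\Gamma^N g$, and $H_\Gamma^N g$ is a \emph{high-frequency} function on $\Gamma$, so the small factor $q^{1/2-s}$ in \eqref{eq:lemma:LGamma-HGamma-20} buys the desired gain.

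First I would fix $\beta>0$ and an arbitrary $q_0 \in (0,1)$ and then choose $q \in (0,1)$ (hence $\lambda>1$ in the definition of $H_\Gamma^N$) according to Lemma~\ref{lemma:LGamma-HGamma}, small enough that the product of the hidden constants below is $\le q_0$; for notational simplicity I will still call the target tolerance $q$. For the $\|\cdot\|_{|\zeta|}$-bound I would apply \eqref{LaplaceDelta-Robin-problemb} of Lemma~\ref{lemma:properties-of-Skp} with boundary datum $H_\Gamma^N g$, giving
\[
\|S_\zeta^\Delta(H_\Gamma^N g)\|_{|\zeta|} \lesssim |\zeta|^{-1/2} \|H_\Gamma^N g\|_\Gamma
\lesssim |\zeta|^{-1/2}\, q^{1/2}\, |\zeta|^{-1/2}\, \|g\|_{\Gamma,|\zeta|}
= q^{1/2}\, |\zeta|^{-1}\, \Gw{g},
\]
where the middle inequality is the $s=0$ case of \eqref{eq:lemma:LGamma-HGamma-20}. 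Absorbing the fixed implied constant into the choice of $\lambda$ (i.e. taking $q$ initially a bit smaller) yields the first claimed estimate with the prefactor $q$.

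For the $H^2$-bound I would use the elliptic-regularity part of Lemma~\ref{lemma:properties-of-Skp}, namely $\|S_\zeta^\Delta(h)\|_{H^2(\Omega)} \lesssim \Gw{h}$ applied to $h = H_\Gamma^N g$, together with the $s=1/2$ case of \eqref{eq:lemma:LGamma-HGamma-20} controlling $\|H_\Gamma^N g\|_{H^{1/2}(\Gamma)}$ and the $s=0$ case controlling $\sqrt{|\zeta|}\,\|H_\Gamma^N g\|_\Gamma$; both are bounded by $\Gw{g}$ up to constants, so $\Gw{H_\Gamma^N g} \lesssim \Gw{g}$ and the second estimate follows with only an implied constant (no $q$), as stated. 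The main thing to be careful about—rather than a genuine obstacle—is bookkeeping of the order of quantifiers: the same $\lambda$ (equivalently the same $q$ in Lemma~\ref{lemma:LGamma-HGamma}) must serve both estimates and must be fixed before $\zeta \in S_\beta^c$ is chosen, which is fine since Lemma~\ref{lemma:LGamma-HGamma} provides a single $\lambda$ valid uniformly in $\zeta$, and since all implied constants there and in Lemma~\ref{lemma:properties-of-Skp} are independent of $\zeta \in S_\beta^c$.
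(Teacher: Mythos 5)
Your proposal is correct and follows essentially the paper's own route: the paper derives this corollary precisely by combining the high-frequency boundary filter estimates of Lemma~\ref{lemma:LGamma-HGamma} with the stability and $H^2$-regularity bounds for $S_{\zeta}^{\Delta}$ in Lemma~\ref{lemma:properties-of-Skp}, exactly as you do, including the rescaling of $q$ (i.e.\ enlarging $\lambda$) to absorb the implied constants. Your bookkeeping of the quantifiers ($\lambda$ fixed before $\zeta\in S_\beta^c$, constants uniform in $\zeta$) matches the intended argument.
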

\begin{lemma}
[analyticity of $S_{\zeta}^{L}$]%
\label{lemma:bounded-smooth-domain-analyticity} Let
Assumption~\ref{ASmoothDomain} be valid and let $\lambda>1$ appearing in the
definition of $L_{\Omega}$ and $L_{\Gamma}^{N}$ be fixed. Then there exist
constants $C$, $\gamma>0$ independent of 
$\zeta\in\mathbb{C}_{\geq0}^{\circ}$ 
 such that,
for every $g\in H^{1/2}\left(  \Gamma\right)  $ and $f\in L^{2}\left(
\Omega\right)  ,$ the function $u_{\mathcal{A}}=S_{\zeta}(L_{\Omega
}f,L_{\Gamma}^{N}g)$ is analytic on $\Omega$ and satisfies for all
$p\in\mathbb{N}_{0}$ the estimates
\begin{align}
\Vert u_{\mathcal{A}}\Vert_{|\zeta|}  &  \leq C\left(  \frac{1}%
{1+\operatorname*{Re}(\zeta)} 
\left\Vert f\right\Vert +\frac{1}{\sqrt
{1+\operatorname*{Re}(\zeta)}}\frac{1}{\sqrt{|\zeta|}}\Gw{g}
 \right) ,\label{analytic_regularity_lf_part-a}\\
\left\Vert \nabla^{p+2}u_{\mathcal{A}}\right\Vert  &  \leq C\gamma^{p}%
\max\left\{  |\zeta|,p+2\right\}  ^{p+2}|\zeta|^{-1}\nonumber\\
&  \times\left(  \frac{1}{1+\operatorname*{Re}(\zeta)}
\left\Vert f\right\Vert+\frac{1}{\sqrt{1+\operatorname*{Re}(\zeta)}} \frac{1}{\sqrt{|\zeta|}}\Gw{g}
  \right)  . \label{analytic_regularity_lf_part}%
\end{align}

\end{lemma}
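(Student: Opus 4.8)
The plan is to obtain the estimates for $u_{\mathcal{A}} = S_\zeta(L_\Omega f, L_\Gamma^N g)$ by combining the analyticity/stability of the full-space Newton potential $N_\zeta$ applied to \emph{band-limited} data with the smoothing properties of the low-frequency filters $L_\Omega$ and $L_\Gamma^N$ established in Lemma~\ref{lemma:LGamma-HGamma}. Concretely, one first reduces the problem with inhomogeneous Robin data to a full-space problem: writing $u_{\mathcal{A}}$ via the ansatz $u_{\mathcal{A}} = N_\zeta(\widetilde{L_\Omega f}) + w$, where $\widetilde{L_\Omega f}$ is a compactly supported extension of $L_\Omega f$ (using the Stein extension and a cut-off supported in $B_R \supset \overline\Omega$), reduces the boundary data to something of the form $g - \partial_n N_\zeta(\widetilde{L_\Omega f}) + \zeta\,(\cdots)$ on $\Gamma$, and the remaining correction $w$ solves a Helmholtz--Robin problem with data that is still of low-frequency type. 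The crucial point is that since the right-hand side $L_\Omega f$ has Fourier support in $\{|\xi| \le \lambda|\operatorname{Im}\zeta|\}$ and likewise for $L_\Gamma^N g$, every application of $N_\zeta$ and of $S_\zeta^\Delta$ or $S_\zeta$ acts on band-limited data, so differentiating $p$ times costs at most a factor $(\lambda|\operatorname{Im}\zeta|)^p \sim (\lambda|\zeta|)^p$ by the Bernstein-type inequality implicit in \eqref{eq:stability-LOmega}, \eqref{eq:stability-HOmega}; this is what produces the $\gamma^p \max\{|\zeta|,p+2\}^{p+2}$ factor in \eqref{analytic_regularity_lf_part}.

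The key steps, in order, are: (i) extend $L_\Omega f$ to $\mathbb{R}^3$ with compact support in $B_R$, controlling the $L^2$-norm by $C\|f\|$, and likewise treat $L_\Gamma^N g$ with the bound from \eqref{eq:lemma:LGamma-HGamma-10}, namely $\|L_\Gamma^N g\|_{H^s(\Gamma)} \lesssim |\zeta|^{s-1/2}\|g\|_{\Gamma,|\zeta|}$; (ii) apply the low-frequency analogue of Lemma~\ref{lemma:properties-of-Nk} (i.e.\ $N_\zeta$ composed with $L_{\mathbb{R}^3}$ rather than $H_{\mathbb{R}^3}$) together with Lemma~\ref{lemma:decomposition} --- where one uses that the analytic part $z_{\mathcal{A}}$ there satisfies precisely the estimate $\|\nabla^p z_{\mathcal{A}}\| \le C'\frac{1+|\zeta|}{1+\operatorname{Re}\zeta}(\sqrt3\lambda|\operatorname{Im}\zeta|)^{p-2}\|\phi\|$ of \eqref{formuladecomplemmab} --- to get the $H^1_{|\zeta|}$-bound \eqref{analytic_regularity_lf_part-a} and the higher-derivative growth for the Newton-potential contribution; (iii) for the boundary correction, solve the residual Robin problem using the well-posedness estimate \eqref{aprioriest} of Lemma~\ref{LemWellPosRobin} (which is robust as $\operatorname{Re}\zeta \to 0$) and the $S_\zeta^\Delta$-estimates of Lemma~\ref{lemma:properties-of-Skp}; (iv) bootstrap to higher derivatives: since the residual data is band-limited, interior elliptic regularity (for $-\Delta + \zeta^2$, written as $-\Delta = f - \zeta^2 u$) applied iteratively gains two derivatives at a cost of $\max\{|\zeta|,p\}^2$ per step, and the analyticity constant $\gamma$ emerges by tracking the combinatorial constants in the Cauchy-type estimates, exactly as in \cite[Lemma~4.6]{mm_stas_helm2}. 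Finally, combine the contributions and absorb the $g$-terms using $\frac{1}{\sqrt{|\zeta|}}\|g\|_{\Gamma,|\zeta|}$ together with $|\zeta| \ge 1$ to match the stated right-hand sides.

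The main obstacle, and the place where the generalization from real to complex $\zeta$ genuinely requires care, is step (iv): obtaining the \emph{$\zeta$-uniform} analyticity with the correct $\frac{1}{1+\operatorname{Re}\zeta}$ prefactor rather than a $\frac{1}{\operatorname{Re}\zeta}$ prefactor that would blow up in the non-sectorial regime. For real $\zeta=\nu$ one can use coercivity directly at every step of the iteration, but here one must feed in the robust a priori estimate \eqref{aprioriest} --- valid exactly because we are in $S_\beta^c$, or more precisely because Lemma~\ref{LemWellPosRobin} holds on all of $\mathbb{C}_{\ge 0}^\circ$ --- and verify that the band-limited nature of the data (preserved under the iteration because $L_\Omega$, $L_\Gamma^N$ commute appropriately with the frequency localization and because the Stein extension has controlled Sobolev norms) keeps the derivative growth polynomial in $\max\{|\zeta|,p\}$ with no hidden dependence on $1/\operatorname{Re}\zeta$ in the exponential rate $\gamma$. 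A secondary technical nuisance is that the lifting $G^N$ in Lemma~\ref{lemma:lifting} is defined with $|\zeta|^2$ (not $\zeta^2$), so the residual right-hand side after subtracting $\partial_n$ of the lift picks up a term $(\zeta^2 - |\zeta|^2)G^N g = -2k\operatorname{i}\,\overline\zeta\, G^N g$; this term has size $\sim |\zeta|^2\|G^N g\|$ but is still band-limited (since it inherits the frequency support of $G^N g$), and it must be carried along through the iteration without destroying the estimates --- this is handled precisely by the $q$-smallness in Corollary~\ref{lemma:properties-of-Snk}, which allows the corresponding fixed-point / Neumann-series argument to close.
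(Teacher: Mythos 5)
Your proposal has a genuine gap at the point you yourself identify as the main difficulty: the higher-order bound \eqref{analytic_regularity_lf_part}. That estimate is a \emph{global} analytic-regularity estimate, i.e.\ the norms $\Vert\nabla^{p+2}u_{\mathcal{A}}\Vert$ are taken over all of $\Omega$ up to the boundary, with Robin data on $\Gamma$. Your step (iv) rests on two ingredients that do not deliver this: (a) interior elliptic regularity, which only controls derivatives on compact subsets away from $\Gamma$, and (b) a Bernstein-type argument, which applies to the band-limited \emph{data} $L_\Omega f$, $L_\Gamma^N g$ and to the full-space Newton potential $N_\zeta$ (a Fourier multiplier), but not to the outputs of the boundary-value solution operators $S_\zeta$ or $S_\zeta^\Delta$: solving the Robin problem does not preserve Fourier support, so the boundary correction $w$ in your splitting is not band-limited and "differentiating costs a factor $\lambda|\operatorname{Im}\zeta|$" is simply not available for it. What is actually needed is wavenumber-explicit analytic regularity \emph{up to the boundary} for the Robin problem (Morrey--Nirenberg-type estimates with tracked constants); the paper does not re-derive this but invokes \cite[Prop.~5.4.5]{MelenkHabil}, and the only new work is to track the constant $C_{G_1}$ (built from $g^N=\langle n^*,\nabla L_\Omega G^N g\rangle$ on a tubular neighborhood) via the lifting bounds \eqref{posdefenergyestimate}, \eqref{H2Gnest}, giving $C_{G_1}\le C|\zeta|^{-2}\Gw{g}$. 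Your "Cauchy-type estimates, tracking combinatorial constants" is precisely the content of that proposition and cannot be waved through, especially since the $\zeta$-uniformity of $\gamma$ is part of the claim.

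Two further mismatches with what the lemma actually requires. First, the energy bound \eqref{analytic_regularity_lf_part-a} needs no Newton-potential splitting at all: since $u_{\mathcal{A}}=S_\zeta(L_\Omega f,L_\Gamma^N g)$ is by definition the solution of the Robin problem with filtered data, one applies Lemma~\ref{LemWellPosRobin} directly and then the stability of $L_\Omega$ \eqref{eq:stability-LOmega}, of $L_\Gamma^N$ (Lemma~\ref{lemma:LGamma-HGamma}), and of the lifting $G^N$ (Lemma~\ref{lemma:lifting}); your reduction to a full-space problem plus residual Robin problem adds complication without benefit. Second, the discussion of a residual term $(\zeta^2-|\zeta|^2)G^N g$ and of closing a fixed-point argument via the $q$-smallness of Corollary~\ref{lemma:properties-of-Snk} conflates this lemma with the later contraction argument (Lemmas~\ref{lemma:domain-contraction} and \ref{lemma:boundary-contraction}, which prove Theorem~\ref{thm:decomposition-bounded-domain}); no iteration or smallness parameter enters the proof of the present lemma, whose estimates must hold for a fixed $\lambda>1$. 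Note also that the lemma is stated for all $\zeta\in\mathbb{C}_{\geq0}^{\circ}$, not only $\zeta\in S_\beta^c$, so arguments that presuppose $|\operatorname{Im}\zeta|\gtrsim|\zeta|$ (as your use of $(\lambda|\operatorname{Im}\zeta|)^p\sim(\lambda|\zeta|)^p$ implicitly does) are not admissible here; the robust prefactor $\frac{1}{1+\operatorname{Re}\zeta}$ comes from Lemma~\ref{LemWellPosRobin}, not from membership in the non-sectorial region.
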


%

\proof
%
%
From Lemma~\ref{LemWellPosRobin}, we have
\begin{equation}
\Vert u_{\mathcal{A}}\Vert_{|\zeta|}\leq C\left(  \frac{1}%
{1+\operatorname*{Re}(\zeta)}\left\Vert L_{\Omega}f\right\Vert +\frac{1}%
{\sqrt{1+\operatorname*{Re}(\zeta)}}\Vert L_{\Gamma}^{N}g\Vert_{\Gamma
}\right)  . \label{uacop}%
\end{equation}
The combination of \eqref{uacop}, Lemma~\ref{lemma:lifting}, 
Lemma~\ref{lemma:LGamma-HGamma} and 
(\ref{eq:stability-LOmega}) leads to 
\[
\Vert u_{\mathcal{A}}\Vert_{|\zeta|}\leq C\left(  \frac{1}%
{1+\operatorname*{Re}(\zeta)}\left\Vert f\right\Vert +\frac{1}{\sqrt
{1+\operatorname*{Re}(\zeta)}}|\zeta|^{-1/2} \|g\|_{\Gamma,|\zeta|}\right).
\]

To estimate higher derivatives, we employ \cite[Prop.~{5.4.5}]{MelenkHabil} in a
similar way as in the proof of \cite[Lemma~{4.13}]{mm_stas_helm2}. To apply
\cite[Prop.~{5.4.5}]{MelenkHabil} an estimate of the constant%
\[
C_{G_{1}}:=\left\vert \zeta\right\vert ^{-1}\sqrt{\left\Vert g^{N}\right\Vert
_{L^{2}\left(  T\right)  }^{2}+\left\vert \zeta\right\vert ^{-2}\left\Vert
\nabla g^{N}\right\Vert _{L^{2}\left(  T\right)  }^{2}}%
\]
is needed, where $g^N$ is defined in (\ref{eq:gN}). 
 We track the dependence of $C_{G_{1}}$ on $\left\vert
\zeta\right\vert $ in a modified way (compared to \cite[p.~{1225}]%
{mm_stas_helm2}): we use inequalities (\ref{posdefenergyestimate}) and
(\ref{H2Gnest}) to obtain%
\begin{equation}
C_{G_{1}}\leq C |\zeta|^{-2} \|g\|_{\Gamma,|\zeta|}. 
 \label{eq:CG1}%
\end{equation}
Estimate (\ref{analytic_regularity_lf_part}) then follows 
from \cite[Prop.~{5.4.5}]{MelenkHabil}.
\endproof

\begin{corollary}
\label{Cor:regularity_lf_specialcase}
Fix $\beta > 0$. 
Let $f$, $\tilde{f}\in L^{2}(\Omega)$ and
$\zeta \in S_{\beta}^{c}$. Set 
$\tilde{u}=N_{\zeta}(H_{\Omega}\tilde{f})$. If $g$ has the form $g=\left(
\partial_{n}\tilde{u}+\zeta\tilde{u}\right)  $ then 
the function $u_{\mathcal{A}}=S_{\zeta}(L_{\Omega}f,L_{\Gamma
}^{N}g)$ satisfies for all $p \in {\mathbb N}_0$
\[%
\begin{split}
\left\Vert \nabla^{p+2}u_{\mathcal{A}}\right\Vert  &  \leq C_\beta\gamma^{p}%
\max\left\{  |\zeta|,p+2\right\}  ^{p+2}|\zeta|^{-1}\\
&  \times\left(  \frac{1}{1+\operatorname*{Re}(\zeta)}\left\Vert f\right\Vert
+\frac{1}{\sqrt{1+\operatorname*{Re}(\zeta)}}\frac{1}{\sqrt{|\zeta|}}%
\Vert\tilde{f}\Vert\right). 
\end{split}
\]
If $\tilde{f}=f$, this gives
\[
\left\Vert \nabla^{p+2}u_{\mathcal{A}}\right\Vert \leq C_{\beta}\gamma^{p}%
\max\left\{  |\zeta|,p+2\right\}  ^{p+2} \left\vert \zeta\right\vert
^{-1}\frac{1}{\left(  1+\operatorname*{Re}\zeta\right)  }\left\Vert
f\right\Vert .
\]

\end{corollary}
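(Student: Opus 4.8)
The plan is to apply Lemma~\ref{lemma:bounded-smooth-domain-analyticity} with a carefully chosen boundary datum $g$, and then to estimate the quantity $\Gw{g}$ appearing on the right-hand side of \eqref{analytic_regularity_lf_part} in terms of $\|\tilde f\|$. Concretely, I would first observe that since $\zeta \in S_\beta^c$ we have $\operatorname{Im}\zeta \ne 0$ by \eqref{equivalence}, so $\tilde u = N_\zeta(H_\Omega \tilde f)$ is well-defined and, by Lemma~\ref{lemma:properties-of-Nk} (interpreting $H_\Omega$ via the Stein extension as in the text), satisfies the bounds \eqref{eq:Newton-stability-contraction}; in particular $\|\tilde u\|_{|\zeta|,B_R} \lesssim_\beta |\operatorname{Im}\zeta|^{-1}\|\tilde f\| \lesssim_\beta |\zeta|^{-1}\|\tilde f\|$ and $\|\tilde u\|_{H^2(B_R)} \lesssim_\beta \|\tilde f\|$, using \eqref{equivalence} to pass from $\operatorname{Im}\zeta$ to $|\zeta|$ at the cost of a $\beta$-dependent constant.

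Next I would estimate the trace quantity $\Gw{g} = \big(\|g\|_{H^{1/2}(\Gamma)}^2 + |\zeta|\,\|g\|_\Gamma^2\big)^{1/2}$ for $g = \partial_n \tilde u + \zeta \tilde u$. The term $\zeta \tilde u$ contributes $|\zeta|\,\|\gamma_0\tilde u\|_{H^{1/2}(\Gamma)}$ and $|\zeta|^{3/2}\|\gamma_0 \tilde u\|_\Gamma$; the former is controlled by $|\zeta|\,\|\tilde u\|_{H^1(\Omega)} \lesssim_\beta |\zeta|\cdot|\zeta|^{-1}\|\tilde f\| = \|\tilde f\|$ via the trace theorem and the first bound in \eqref{eq:Newton-stability-contraction}, while the latter is handled with the multiplicative trace inequality \eqref{eq:mtrace}: $|\zeta|^{3/2}\|\gamma_0\tilde u\|_\Gamma \lesssim |\zeta|^{3/2}\|\tilde u\|^{1/2}\|\tilde u\|_{H^1(\Omega)}^{1/2} \lesssim |\zeta|^{3/2}(|\zeta|^{-1}\|\tilde u\|_{|\zeta|})^{1/2}(\|\tilde u\|_{|\zeta|})^{1/2} = |\zeta|\,\|\tilde u\|_{|\zeta|} \lesssim_\beta \|\tilde f\|$. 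For the normal-derivative term $\partial_n \tilde u$, one uses the $H^2$-bound: $\|\partial_n \tilde u\|_{H^{1/2}(\Gamma)} \lesssim \|\tilde u\|_{H^2(\Omega)} \lesssim_\beta \|\tilde f\|$, and $|\zeta|^{1/2}\|\partial_n\tilde u\|_\Gamma \lesssim |\zeta|^{1/2}\|\tilde u\|_{H^2(\Omega)}$; this last term carries an extra $|\zeta|^{1/2}$ that I would absorb by instead interpolating — using a multiplicative trace estimate for $\nabla\tilde u$ of the form $\|\partial_n\tilde u\|_\Gamma \lesssim \|\nabla\tilde u\|^{1/2}\|\nabla\tilde u\|_{H^1(\Omega)}^{1/2}$ together with $\|\nabla\tilde u\| \lesssim \|\tilde u\|_{|\zeta|}$ and $\|\nabla\tilde u\|_{H^1(\Omega)}\lesssim\|\tilde u\|_{H^2(\Omega)}$, which gives $|\zeta|^{1/2}\|\partial_n\tilde u\|_\Gamma \lesssim_\beta |\zeta|^{1/2}(|\zeta|^{-1}\|\tilde u\|_{|\zeta|})^{1/2}\|\tilde f\|^{1/2} \lesssim_\beta \|\tilde f\|$. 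Collecting these, $\Gw{g} \lesssim_\beta \|\tilde f\|$.

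With $\Gw{g} \lesssim_\beta \|\tilde f\|$ established, I would substitute into \eqref{analytic_regularity_lf_part}: the boundary contribution there is $\dfrac{1}{\sqrt{1+\operatorname{Re}\zeta}}\,\dfrac{1}{\sqrt{|\zeta|}}\,\Gw{g} \lesssim_\beta \dfrac{1}{\sqrt{1+\operatorname{Re}\zeta}}\,\dfrac{1}{\sqrt{|\zeta|}}\,\|\tilde f\|$, which is precisely the claimed form (noting that $1/\sqrt{1+\operatorname{Re}\zeta}\le 1$, so one may also write it as $\tfrac{1}{\sqrt{1+\operatorname{Re}\zeta}}\tfrac{1}{\sqrt{|\zeta|}}\|\tilde f\|$). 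This yields the first displayed inequality of the corollary with $C_\beta$ depending on $\beta$ through the $S_\beta^c$-dependent constants of Lemma~\ref{lemma:properties-of-Nk}. The second statement is the special case $\tilde f = f$: then the two right-hand-side terms become $\tfrac{1}{1+\operatorname{Re}\zeta}\|f\| + \tfrac{1}{\sqrt{1+\operatorname{Re}\zeta}}\tfrac{1}{\sqrt{|\zeta|}}\|f\|$, and since $|\zeta|\ge1$ and $\sqrt{1+\operatorname{Re}\zeta}\ge(1+\operatorname{Re}\zeta)/\sqrt{2\,}$ \,(using $1+\operatorname{Re}\zeta \le 2$ is false in general — rather one uses $|\zeta|\ge 1$ and $\operatorname{Re}\zeta\le|\zeta|$), one bounds $\tfrac{1}{\sqrt{1+\operatorname{Re}\zeta}}\tfrac{1}{\sqrt{|\zeta|}} \le \tfrac{1}{1+\operatorname{Re}\zeta}$ whenever $|\zeta| \ge (1+\operatorname{Re}\zeta)/(1+\operatorname{Re}\zeta)\cdot\ldots$; more simply, $1/\sqrt{|\zeta|}\le 1$ and $1/\sqrt{1+\operatorname{Re}\zeta}\le \sqrt{2}/(1+\operatorname{Re}\zeta)\cdot\sqrt{(1+\operatorname{Re}\zeta)/2}$, so for $\operatorname{Re}\zeta \le 1$ the factor $1/\sqrt{1+\operatorname{Re}\zeta}\le 1 \le 2/(1+\operatorname{Re}\zeta)$, while for $\operatorname{Re}\zeta \ge 1$ one has $|\zeta|\ge\operatorname{Re}\zeta\ge 1$ so $1/\sqrt{|\zeta|}\le 1/\sqrt{\operatorname{Re}\zeta}$ and $\tfrac{1}{\sqrt{1+\operatorname{Re}\zeta}}\tfrac{1}{\sqrt{|\zeta|}} \le \tfrac{1}{\operatorname{Re}\zeta} \lesssim \tfrac{1}{1+\operatorname{Re}\zeta}$; in all cases $\tfrac{1}{\sqrt{1+\operatorname{Re}\zeta}}\tfrac{1}{\sqrt{|\zeta|}}\lesssim \tfrac{1}{1+\operatorname{Re}\zeta}$, giving the stated simplified bound with $|\zeta|^{-1}(1+\operatorname{Re}\zeta)^{-1}\|f\|$.

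I expect the main obstacle to be the bookkeeping of powers of $|\zeta|$ in the trace estimate for $g = \partial_n\tilde u + \zeta\tilde u$: the crude trace bound $\|\cdot\|_\Gamma \lesssim \|\cdot\|_{H^1(\Omega)}$ loses a half power of $|\zeta|$ relative to the $|\zeta|$-weighted norms, so one must consistently use the multiplicative (interpolation) trace inequality \eqref{eq:mtrace} to balance an $L^2(\Omega)$-factor against an $H^1(\Omega)$- or $H^2(\Omega)$-factor, and keep track of which of the two Newton-potential bounds in \eqref{eq:Newton-stability-contraction} supplies the $|\zeta|^{-1}$ decay. Everything else is a direct substitution into Lemma~\ref{lemma:bounded-smooth-domain-analyticity}.
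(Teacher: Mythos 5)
Your argument is correct and essentially the paper's: the paper likewise bounds the boundary datum $g=\partial_{n}\tilde u+\zeta\tilde u$ via the Newton-potential bounds of Lemma~\ref{lemma:properties-of-Nk} combined with multiplicative trace inequalities (its estimates \eqref{estofrobinterms} and \eqref{eq:lemma:domain-contraction-10} are exactly your trace bookkeeping) and then feeds this into the analytic-regularity machinery of Lemma~\ref{lemma:bounded-smooth-domain-analyticity}, the only cosmetic difference being that the paper re-estimates the constant $C_{G_1}$ of \eqref{eq:CG1} inside that lemma's proof rather than citing \eqref{analytic_regularity_lf_part} as a black box with $\Gw{g}\lesssim_{\beta}\Vert\tilde f\Vert$ as you do. Your final reduction $\tfrac{1}{\sqrt{1+\operatorname{Re}\zeta}}\tfrac{1}{\sqrt{|\zeta|}}\lesssim\tfrac{1}{1+\operatorname{Re}\zeta}$ (which follows simply from $1+\operatorname{Re}\zeta\le 2|\zeta|$, valid since $|\zeta|\ge 1$ and $\operatorname{Re}\zeta\le|\zeta|$) is also correct, though the sentence in which you derive it is garbled and should be cleaned up.
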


%

\proof
We proceed in the same way as in \cite[Lemma~{4.12}]{mm_stas_helm2} with
$k=\operatorname{Im}\zeta$ and estimate the constant $C_{G_{1}}$ in
\eqref{eq:CG1}. Lemma~\ref{lemma:properties-of-Nk} and (\ref{multtraceinequ})
lead to%
\begin{subequations}
\label{estofrobinterms}
\end{subequations}%
\begin{align}
\Vert\tilde{u}\Vert_{\Gamma}  &  \leq C\left\vert \zeta\right\vert
^{-1/2}\left\Vert \tilde{u}\right\Vert _{|\zeta|}\leq C\left\vert
\zeta\right\vert ^{-3/2}\Vert\tilde{f}\Vert\tag{%
\ref{estofrobinterms}%
a}\label{estofrobintermsa}\\
\Vert\tilde{u}\Vert_{H^{1/2}(\Gamma)}  &  \leq C\Vert \tilde
{u}\Vert _{H^{1}(\Omega)}\leq C\left\vert \zeta\right\vert
^{-1}\Vert \tilde{f}\Vert ,\tag{%
\ref{estofrobinterms}%
b}\label{estofrobintermsb}\\
\Vert \partial_{n}\tilde{u}\Vert _{\Gamma}  &  \leq C\left\Vert
\nabla\tilde{u}\right\Vert ^{1/2}\left\Vert \tilde{u}\right\Vert
_{H^{2}\left(  \Omega\right)  }^{1/2}\leq C\left\vert \zeta\right\vert
^{-1/2}\Vert \tilde{f}\Vert ,\tag{%
\ref{estofrobinterms}%
c}\label{estofrobintermsc}\\
\left\Vert \partial_{n}\tilde{u}\right\Vert _{H^{1/2}\left(  \Gamma\right)  }
&  \leq C\left\Vert \tilde{u}\right\Vert _{H^{2}\left(  \Omega\right)  }\leq
C\Vert \tilde{f}\Vert. \tag{%
\ref{estofrobinterms}%
d}\label{estofrobintermsd}%
\end{align}
This implies
\begin{equation}
\Vert\partial_{n}\tilde{u}+\zeta\tilde{u}\Vert_{L^{2}(\Gamma)}\lesssim\frac
{1}{\sqrt{|\zeta|}}\Vert\tilde{f}\Vert,\qquad\Vert\partial_{n}\tilde{u}%
+\zeta\tilde{u}\Vert_{H^{1/2}(\Gamma)}\lesssim\Vert \tilde{f}\Vert
, \label{eq:lemma:domain-contraction-10}%
\end{equation}
and%
\[
C_{G_{1}}:=\frac{1}{|\zeta|^{3/2}}\Vert\left(  \partial_{n}\tilde{u}%
+\zeta\tilde{u}\right)  \Vert_{\Gamma}+\frac{1}{|\zeta|^{2}}\Vert\left(
\partial_{n}\tilde{u}+\zeta\tilde{u}\right)  \Vert_{H^{1/2}(\Gamma)}\leq
C\left\vert \zeta\right\vert ^{-2}\Vert\tilde{f}\Vert.
\]
In the same way as at the end of the proof of Lemma
\ref{lemma:bounded-smooth-domain-analyticity} we obtain%
\[%
\begin{split}
\left\Vert \nabla^{p+2}u_{\mathcal{A}}\right\Vert  &  \leq C_{\beta}\gamma
^{p}\max\left\{  |\zeta|,p+2\right\}  ^{p+2}|\zeta|^{-1}\\
&  \times\left(  \frac{1}{1+\operatorname*{Re}(\zeta)}\left\Vert f\right\Vert
+\frac{1}{\sqrt{1+\operatorname*{Re}(\zeta)}}\frac{1}{\sqrt{|\zeta|}}%
\Vert\tilde{f}\Vert+\frac{1}{|\zeta|}\Vert\tilde{f}\Vert\right)  .
\end{split}
\]
\endproof

\begin{lemma}
[properties of $S_{\zeta}(f,0)$]\label{lemma:domain-contraction} Let
$\beta > 0$, Assumption~\ref{ASmoothDomain} be valid, 
and $\zeta\in S_{\beta}^{c}$. For every $q\in(0,1)$, there exist
constants $C$, $K>0$, depending on \(\beta\) such that for every $f\in L^{2}%
(\Omega)$ and $\zeta\in S_{\beta}^{c}$, the function $u=S_{\zeta}(f,0)$ can be
written as $u=u_{\mathcal{A}}+u_{H^{2}}+\widetilde{u}$, where
\begin{align*}
\Vert u_{\mathcal{A}}\Vert_{|\zeta|}  &  \leq\frac{C}%
{1+\operatorname*{Re}(\zeta)}\Vert f\Vert,\\
\Vert\nabla^{p+2}u_{\mathcal{A}}\Vert &  \leq\frac{C}%
{1+\operatorname*{Re}(\zeta)}|\zeta|^{-1}K^{p}\max\{p+2,|\zeta|\}^{p+2}\Vert
f\Vert\qquad\forall p\in\mathbb{N}_{0},\\
\Vert u_{H^{2}}\Vert_{|\zeta|}  &  \leq q|\zeta|^{-1}\Vert f\Vert,\\
\Vert u_{H^{2}}\Vert_{H^{2}(\Omega)}  &  \leq C\Vert f\Vert.
\end{align*}
For a function $\widetilde f$ with $\|\widetilde f\| \leq q \|f\|$ 
the remainder $\widetilde{u}=S_{\zeta}(\widetilde{f},0)$ satisfies
\[
-\Delta\widetilde{u}+\zeta^{2}\widetilde{u}=\widetilde{f},\qquad\partial
_{n}\widetilde{u}+\zeta\widetilde{u}=0.
\]
\end{lemma}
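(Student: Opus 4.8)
The plan is to carry out one step of the bootstrap argument of \cite[Sect.~4]{mm_stas_helm2}, transcribed to complex frequencies: I would peel one analytic summand $u_{\mathcal A}$ and one $H^{2}$-regular summand $u_{H^{2}}$ off $u=S_{\zeta}(f,0)$ and collect the rest in $\widetilde u=S_{\zeta}(\widetilde f,0)$ with $\|\widetilde f\|\leq q\|f\|$; the full decomposition of Theorem~\ref{thm:decomposition-bounded-domain} then comes from iterating this step and summing a geometric series. The hypothesis $\zeta\in S_{\beta}^{c}$, i.e.\ $|\operatorname{Im}\zeta|\geq\beta\operatorname{Re}\zeta$, enters only through the equivalence \eqref{equivalence}, which converts the factors $|\operatorname{Im}\zeta|^{-1}$ and $(|\zeta|/\operatorname{Im}\zeta)^{2}$ in Lemma~\ref{lemma:properties-of-Nk} into $|\zeta|^{-1}$ and $\mathcal O(1)$; this is precisely what makes the contraction estimates robust as $\operatorname{Re}\zeta\to0$.

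First I would split $f=L_{\Omega}f+H_{\Omega}f$ and set $u_{N}:=N_{\zeta}(H_{\Omega}f)$, understood as in Lemma~\ref{lemma:properties-of-Nk}, so that $-\Delta u_{N}+\zeta^{2}u_{N}=H_{\Omega}f$ in $\Omega$ and, by \eqref{eq:Newton-stability-contractionb}, $u_{N}\in H^{2}(\Omega)$; hence the Robin defect $g_{N}:=\partial_{n}u_{N}+\zeta u_{N}\in H^{1/2}(\Gamma)$ is well defined. Green's identity gives $w:=u-u_{N}=S_{\zeta}(L_{\Omega}f,-g_{N})$. Next I would split the boundary datum $-g_{N}=L_{\Gamma}^{N}(-g_{N})+H_{\Gamma}^{N}(-g_{N})$ (recall $L_{\Gamma}^{N}+H_{\Gamma}^{N}=\mathrm{id}$), put $u_{\mathcal A}:=S_{\zeta}^{L}(f,-g_{N})=S_{\zeta}(L_{\Omega}f,L_{\Gamma}^{N}(-g_{N}))$, and obtain $w_{2}:=w-u_{\mathcal A}=S_{\zeta}(0,H_{\Gamma}^{N}(-g_{N}))$, which solves $-\Delta w_{2}+\zeta^{2}w_{2}=0$ with Robin datum $H_{\Gamma}^{N}(-g_{N})$. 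Finally I compare $w_{2}$ with the shifted problem: with $w_{2}^{\Delta}:=S_{\zeta}^{\Delta}(H_{\Gamma}^{N}(-g_{N}))$, the difference $r:=w_{2}-w_{2}^{\Delta}$ solves $-\Delta r+\zeta^{2}r=(|\zeta|^{2}-\zeta^{2})w_{2}^{\Delta}$, $\partial_{n}r+\zeta r=0$, i.e.\ $r=S_{\zeta}(\widetilde f,0)$ with $\widetilde f:=(|\zeta|^{2}-\zeta^{2})w_{2}^{\Delta}$. Assembling, $u=u_{\mathcal A}+u_{H^{2}}+\widetilde u$ with $u_{H^{2}}:=u_{N}+w_{2}^{\Delta}$ and $\widetilde u:=r$.

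Then it remains to verify the estimates. From the intermediate bounds \eqref{eq:lemma:domain-contraction-10} (specialized to $\widetilde f=f$) one gets $\|g_{N}\|_{\Gamma,|\zeta|}\lesssim\|f\|$; feeding this into \eqref{analytic_regularity_lf_part-a} and Corollary~\ref{Cor:regularity_lf_specialcase} (case $\widetilde f=f$), together with $1+\operatorname{Re}\zeta\leq2|\zeta|$ and linearity in the boundary datum, produces the claimed bounds on $\|u_{\mathcal A}\|_{|\zeta|}$ and $\|\nabla^{p+2}u_{\mathcal A}\|$. For $u_{H^{2}}$, inequality \eqref{eq:Newton-stability-contractiona} and Corollary~\ref{lemma:properties-of-Snk} give $\|u_{N}\|_{|\zeta|,\Omega}+\|w_{2}^{\Delta}\|_{|\zeta|}\lesssim q_{0}\,|\zeta|^{-1}\|f\|$, while \eqref{eq:Newton-stability-contractionb} and Corollary~\ref{lemma:properties-of-Snk} give $\|u_{N}\|_{H^{2}(\Omega)}+\|w_{2}^{\Delta}\|_{H^{2}(\Omega)}\lesssim\|f\|$, the $H^{2}$-constants staying $\lambda$-independent because $\lambda/(\lambda-1)$ and $|\zeta|/\operatorname{Im}\zeta$ remain bounded on $S_{\beta}^{c}$ for $\lambda\geq\lambda_{0}$. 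For the remainder, $|\,|\zeta|^{2}-\zeta^{2}\,|=2|\operatorname{Im}\zeta|\,|\zeta|\leq2|\zeta|^{2}$ and $\|w_{2}^{\Delta}\|\leq|\zeta|^{-1}\|w_{2}^{\Delta}\|_{|\zeta|}$ yield $\|\widetilde f\|\leq2|\zeta|\,\|w_{2}^{\Delta}\|_{|\zeta|}\lesssim q_{0}\|f\|$. Choosing the filter parameter $\lambda$ large enough (hence $q_{0}$ small enough relative to the $\beta$-dependent constants) forces $\|u_{H^{2}}\|_{|\zeta|}\leq q|\zeta|^{-1}\|f\|$ and $\|\widetilde f\|\leq q\|f\|$ at once.

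The only genuine difficulty is this last bookkeeping: one must check that a \emph{single} admissible choice of $\lambda$ drives all three smallness quantities $\|u_{N}\|_{|\zeta|}$, $\|w_{2}^{\Delta}\|_{|\zeta|}$, $\|\widetilde f\|$ below $q$ times the data while the non-contractive ($H^{2}$ and analytic) constants stay uniform in $\lambda$ and in $\zeta$. That uniformity is exactly where the geometry $|\operatorname{Im}\zeta|\asymp|\zeta|$ on $S_{\beta}^{c}$ is indispensable: it tames the factors $(|\zeta|/\operatorname{Im}\zeta)^{2}$ coming from the full-space Newton potential. Everything else is Green's identity together with the already-established mapping properties of $N_{\zeta}$, $S_{\zeta}^{\Delta}\circ H_{\Gamma}^{N}$, and $S_{\zeta}^{L}$.
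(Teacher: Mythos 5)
Your proposal is correct and follows essentially the same route as the paper's proof: the same single bootstrap step using the splittings $f=L_\Omega f+H_\Omega f$ and $g_N=L_\Gamma^N g_N+H_\Gamma^N g_N$, the same operators $N_\zeta$, $S_\zeta(L_\Omega\cdot,L_\Gamma^N\cdot)$ and $S_\zeta^\Delta\circ H_\Gamma^N$ with the bounds of Lemma~\ref{lemma:properties-of-Nk}, Lemma~\ref{lemma:bounded-smooth-domain-analyticity}, Corollary~\ref{Cor:regularity_lf_specialcase} and Corollary~\ref{lemma:properties-of-Snk}, and the same residual, since your $\widetilde f=(|\zeta|^2-\zeta^2)w_2^\Delta$ coincides with the paper's $2(k^2+\operatorname{i}\nu k)u_{H^2}^{\operatorname{II}}$. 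The only differences are cosmetic (you assemble $u_{\mathcal A}$ and $u_{H^2}$ in one step by linearity instead of the paper's two labeled stages), and your explicit bookkeeping that a single choice of $\lambda$ controls all contraction terms matches the paper's concluding argument.
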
%

\proof
Define
\[
u_{\mathcal{A}}^{\operatorname*{I}}:=S_{\zeta}(L_{\Omega}f,0),\qquad u_{H^{2}%
}^{\operatorname*{I}}:=N_{\zeta}(H_{\Omega}f).
\]
Here, the parameter $\lambda$ defining the filter operators $L_{\Omega}$ and
$H_{\Omega}$ is still at our disposal and will be selected at the end of the
proof. Then, $u_{\mathcal{A}}^{\operatorname*{I}}$ satisfies the desired
bounds by Lemma~\ref{lemma:bounded-smooth-domain-analyticity}.
Lemma~\ref{lemma:properties-of-Nk} gives
\[
\Vert u_{H^{2}}^{\operatorname*{I}}\Vert_{|\zeta|}\leq q^{\prime}|\zeta
|^{-1}\Vert f\Vert\qquad\mbox{ and }\qquad\Vert u_{H^{2}}^{\operatorname*{I}%
}\Vert_{H^{2}(\Omega)}\leq C\Vert f\Vert.
\]
Also, the parameter $q^{\prime}\in(0,1)$ depends on $\lambda$ and is still at
our disposal. In fact, in view of the statement of
Lemma~\ref{lemma:properties-of-Nk} it can be made sufficiently small by taking
$\lambda$ sufficiently large.

The function $u^{\operatorname*{I}}:=u-(u_{\mathcal{A}}^{\operatorname*{I}%
}+u_{H^{2}}^{\operatorname*{I}})$ solves
\begin{equation}
-\Delta u^{\operatorname*{I}}+\zeta^{2}u^{\operatorname*{I}}=0,\qquad
\partial_{n}u^{\operatorname*{I}}+\zeta u^{\operatorname*{I}}=-\left(
\partial_{n}u_{H^{2}}^{\operatorname*{I}}+\zeta u_{H^{2}}^{\operatorname*{I}%
}\right)  . \label{HEforu1}%
\end{equation}

Next, we define the functions $u_{\mathcal{A}}^{\operatorname*{II}}$ and
$u_{H^{2}}^{\operatorname*{II}}$ by
\[
u_{\mathcal{A}}^{\operatorname*{II}}:=S_{\zeta}\left(  0,-L_{\Gamma}%
^{N}\left(  \partial_{n}u_{H^{2}}^{\operatorname*{I}}+\zeta u_{H^{2}%
}^{\operatorname*{I}}\right)  \right)  ,\qquad u_{H^{2}}^{\operatorname*{II}%
}:=S_{\zeta}^{\Delta}\left(  -H_{\Gamma}^{N}\left(  \partial_{n}u_{H^{2}%
}^{\operatorname*{I}}+\zeta u_{H^{2}}^{\operatorname*{I}}\right)  \right)  .
\]
Then, the analytic part $u_{\mathcal{A}}^{\operatorname*{II}}$ satisfies again
the desired analyticity bounds by
Lemma~\ref{lemma:bounded-smooth-domain-analyticity} and
Corollary~\ref{Cor:regularity_lf_specialcase} . For the function $u_{H^{2}%
}^{\operatorname*{II}}$ we obtain from Lemma~\ref{lemma:properties-of-Snk} and
inequalities \eqref{estofrobinterms} (set $\tilde{u}=u_{H^{2}}%
^{\operatorname*{I}}$) the estimates%
\begin{align*}
\Vert u_{H^{2}}^{\operatorname*{II}}\Vert_{|\zeta|}  
&\leq q^{\prime}|\zeta|^{-1} \Gw{\partial_{n}u_{H^{2}}^{\operatorname*{I}}+\zeta u_{H^{2} }^{\operatorname*{I}}}
\leq C{q^{\prime}}|\zeta
|^{-1}\Vert f\Vert,\\
\Vert u_{H^{2}}^{\operatorname*{II}}\Vert_{H^{2}(\Omega)} &\lesssim \Gw{\partial_{n}u_{H^{2}}^{\operatorname*{I}}+\zeta u_{H^{2} }^{\operatorname*{I}}}
\lesssim \Vert f\Vert.
\end{align*}
Let $\nu=\operatorname{Re}\zeta$ and $k=-\operatorname{Im}\zeta$. We now set
$u_{\mathcal{A}}:=u_{\mathcal{A}}^{\operatorname*{I}}+u_{\mathcal{A}%
}^{\operatorname*{II}}$ and $u_{H^{2}}:=u_{H^{2}}^{\operatorname*{I}}%
+u_{H^{2}}^{\operatorname*{II}}$ and conclude that the function $\widetilde
{u}:=u-(u_{\mathcal{A}}+u_{H^{2}})$ satisfies%
\[
-\Delta\widetilde{u}+\zeta^{2}\widetilde{u}=\widetilde{f}:=2\left(
k^{2}+\operatorname*{i}\nu k\right)  u_{H^{2}}^{\operatorname*{II}}%
,\qquad\partial_{n}\widetilde{u}+\zeta\widetilde{u}=0.
\]
For $\widetilde{f}$ we obtain
\[
\Vert\widetilde{f}\Vert\leq C|\zeta|\Vert u_{H^{2}}^{\operatorname*{II}}%
\Vert_{|\zeta|}\leq C{q^{\prime}}\Vert f\Vert.
\]
Hence, by taking $\lambda$ sufficiently large so that $q^{\prime}$ is
sufficiently small, we arrive at the desired bound.%
\endproof

\begin{lemma}
[properties of $S_{\zeta}(0,g)$]\label{lemma:boundary-contraction} Let
$\beta>0$ and Assumption~\ref{ASmoothDomain} be valid. Let $q\in(0,1)$. 
Then there exist
constants $C$, $K>0$ independent of $\zeta\in S_{\beta}^{c}$ (but depending on \(\beta\)) such that
for every $g\in H^{1/2}(\Gamma)$ the function $u=S_{\zeta}(0,g)$ can be
written as $u=u_{\mathcal{A}}+u_{H^{2}}+\widetilde{u}$, where 
for all $p \in {\mathbb N}_0$
\begin{align*}
\Vert u_{\mathcal{A}}\Vert_{|\zeta|}  &  \leq\frac{C }{\sqrt
{1+\operatorname*{Re}\zeta}} \frac{1}{\sqrt{|\zeta|}}\Gw{g}
,\\
\Vert\nabla^{p+2}u_{\mathcal{A}}\Vert &  \leq C |\zeta|^{-1}K
^{p}\max\{p+2,|\zeta|\}^{p+2}\frac{1}{\sqrt{1+\operatorname{Re}(\zeta)}} \frac{1}{\sqrt{|\zeta|}}\Gw{g}
,\\
\Vert u_{H^{2}}\Vert_{|\zeta|}  &  \leq q \frac{1}{|\zeta|}\Gw{g}
,\\
\Vert u_{H^{2}}\Vert_{H^{2}(\Omega)}  &  \leq C \Gw{g}
.
\end{align*}
For a $\widetilde g$ with $\|\widetilde g\|_{\Gamma,|\zeta|} 
\leq \|g\|_{\Gamma,|\zeta|}$ the remainder 
$\widetilde{u}=S_{\zeta}(0,\widetilde{g})$ satisfies 
the equation%
\[
-\Delta\widetilde{u}+\zeta^{2}\widetilde{u}=0\qquad\partial_{n}\widetilde
{u}+\zeta\widetilde{u}=\widetilde{g}.
\]

\end{lemma}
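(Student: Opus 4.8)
The plan is to mimic the proof of Lemma~\ref{lemma:domain-contraction}, with the interior datum $f$ replaced by the boundary datum $g$. Write $\nu=\operatorname{Re}\zeta$, $k=-\operatorname{Im}\zeta$, fix $q\in(0,1)$, and keep the filter parameter $\lambda>1$ (entering $L_\Omega$, $H_\Omega$, $L_\Gamma^N$, $H_\Gamma^N$) at our disposal, to be chosen large at the very end so that all contraction constants produced below are $\le q$. Recall from Lemma~\ref{lemma:decomposition} and the remark after it that $\zeta\in S_\beta^c$ forces $\operatorname{Im}\zeta\ne0$ and $|\operatorname{Im}\zeta|\sim|\zeta|$, which is used freely.

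First I would split $g=L_\Gamma^N g+H_\Gamma^N g$ and set
\[
u_{\mathcal A}^{\mathrm I}:=S_\zeta^L(0,g)=S_\zeta(0,L_\Gamma^N g),\qquad u_{H^2}^{\mathrm I}:=S_\zeta^\Delta(H_\Gamma^N g).
\]
By Lemma~\ref{lemma:bounded-smooth-domain-analyticity} (with $f=0$) the function $u_{\mathcal A}^{\mathrm I}$ is analytic and already satisfies the two estimates claimed for $u_{\mathcal A}$, with the weight $\tfrac{1}{\sqrt{1+\nu}}\tfrac{1}{\sqrt{|\zeta|}}\Gw{g}$; by Corollary~\ref{lemma:properties-of-Snk} one has $\|u_{H^2}^{\mathrm I}\|_{|\zeta|}\le q'|\zeta|^{-1}\Gw{g}$ and $\|u_{H^2}^{\mathrm I}\|_{H^2(\Omega)}\lesssim\Gw{g}$. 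Since $L_\Gamma^N g+H_\Gamma^N g=\partial_n G^N g=g$ and since $S_\zeta^\Delta$ uses the shifted operator $-\Delta+|\zeta|^2$, the leftover $u^{\mathrm I}:=u-u_{\mathcal A}^{\mathrm I}-u_{H^2}^{\mathrm I}$ has vanishing Robin trace and solves $-\Delta u^{\mathrm I}+\zeta^2 u^{\mathrm I}=\psi$ with $\psi:=(|\zeta|^2-\zeta^2)u_{H^2}^{\mathrm I}=2k(k+\operatorname*{i}\nu)u_{H^2}^{\mathrm I}$, so that $\|\psi\|\le 2|k||\zeta|\,\|u_{H^2}^{\mathrm I}\|\le 2|\zeta|\,\|u_{H^2}^{\mathrm I}\|_{|\zeta|}\le 2q'\Gw{g}$; moreover $\|\psi\|_{H^2(\Omega)}\lesssim|\zeta|^2\Gw{g}$, hence $\psi$ is $H^2$-smooth and $\|H_\Omega\psi\|\lesssim(\lambda|\operatorname{Im}\zeta|)^{-2}\|\psi\|_{H^2}\lesssim\lambda^{-2}\Gw{g}$ by \eqref{eq:stability-HOmega}.

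Next I would reprocess $u^{\mathrm I}=S_\zeta(\psi,0)$ exactly as in the first step of the proof of Lemma~\ref{lemma:domain-contraction}: put $u_{\mathcal A}^{\mathrm{II}}:=S_\zeta(L_\Omega\psi,0)$ and $u_{H^2}^{\mathrm{II}}:=N_\zeta(H_\Omega\psi)$. By Lemma~\ref{lemma:properties-of-Nk}, $\|u_{H^2}^{\mathrm{II}}\|_{|\zeta|}\lesssim q'|\zeta|^{-1}\|\psi\|$ and $\|u_{H^2}^{\mathrm{II}}\|_{H^2(B_R)}\lesssim\|\psi\|$; since $N_\zeta$ solves $-\Delta+\zeta^2$ exactly on $B_R\supset\Omega$, the function $\widetilde u:=u^{\mathrm I}-u_{\mathcal A}^{\mathrm{II}}-u_{H^2}^{\mathrm{II}}$ solves $-\Delta\widetilde u+\zeta^2\widetilde u=\psi-L_\Omega\psi-H_\Omega\psi=0$ and has Robin trace $\widetilde g:=-(\partial_n u_{H^2}^{\mathrm{II}}+\zeta u_{H^2}^{\mathrm{II}})$, i.e.\ $\widetilde u=S_\zeta(0,\widetilde g)$. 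Setting $u_{\mathcal A}:=u_{\mathcal A}^{\mathrm I}+u_{\mathcal A}^{\mathrm{II}}$ and $u_{H^2}:=u_{H^2}^{\mathrm I}+u_{H^2}^{\mathrm{II}}$ gives the asserted decomposition. The $H^2$-bound for $u_{H^2}$ and the contraction $\|u_{H^2}\|_{|\zeta|}\le q|\zeta|^{-1}\Gw{g}$ follow by adding the Step-1 and Step-2 bounds and choosing $\lambda$ large; the multiplicative trace inequality \eqref{multtraceinequ} together with the bounds on $u_{H^2}^{\mathrm{II}}$ gives, exactly as in \eqref{estofrobinterms}, $\|\widetilde g\|_{\Gamma,|\zeta|}\lesssim\|\psi\|\lesssim q'\Gw{g}\le\Gw{g}$; and the analytic bounds for $u_{\mathcal A}^{\mathrm{II}}$ are obtained by observing that the Robin traces occurring here are of the special form $\partial_n\tilde v+\zeta\tilde v$ with $\tilde v$ a Newton potential of a high-frequency source, so that Lemma~\ref{lemma:bounded-smooth-domain-analyticity} can be sharpened through Corollary~\ref{Cor:regularity_lf_specialcase} to carry the boundary weight $\tfrac{1}{\sqrt{1+\nu}}\tfrac{1}{\sqrt{|\zeta|}}$ rather than the interior weight $\tfrac{1}{1+\nu}$.

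The hard part will be the $\zeta$-weighted bookkeeping. The interior leftover $\psi$ has size $\Theta(|\zeta|^2)\,\|u_{H^2}^{\mathrm I}\|$, so it is only the contraction built into $\|u_{H^2}^{\mathrm I}\|_{|\zeta|}$ (from the coercivity-based estimate of Corollary~\ref{lemma:properties-of-Snk}) that makes $\psi$ small; one must then verify that \emph{every} analytic contribution inherits the weight $\tfrac{1}{\sqrt{1+\operatorname{Re}\zeta}}\tfrac{1}{\sqrt{|\zeta|}}\Gw{g}$ and not the incomparable interior weight $\tfrac{1}{1+\operatorname{Re}\zeta}$, that $u_{H^2}$ does not pick up a spurious factor $|\zeta|$ (this is exactly where the $H^2$-smoothness of $\psi$, hence the smallness of $H_\Omega\psi$, is needed), and that $\widetilde g$ genuinely contracts in $\|\cdot\|_{\Gamma,|\zeta|}$. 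This delicate tracking is where the sharp lifting estimates of Lemmas~\ref{lemma:lifting} and~\ref{lemma:LGamma-HGamma} and the special-case Corollary~\ref{Cor:regularity_lf_specialcase} enter, and it is the only nontrivial point beyond the routine calculations already carried out for Lemma~\ref{lemma:domain-contraction}.
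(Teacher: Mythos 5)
Your construction coincides with the paper's: split $g=L_\Gamma^N g+H_\Gamma^N g$, set $u_{\mathcal A}^{\operatorname*{I}}=S_\zeta(0,L_\Gamma^N g)$ and $u_{H^2}^{\operatorname*{I}}=S_\zeta^\Delta(H_\Gamma^N g)$, observe that the remaining error solves a problem with zero Robin datum and volume residual $\psi=(|\zeta|^2-\zeta^2)u_{H^2}^{\operatorname*{I}}=2(k^2+\operatorname*{i}\nu k)u_{H^2}^{\operatorname*{I}}$ with $\Vert\psi\Vert\lesssim q'\Gw{g}$, reprocess it through $u_{\mathcal A}^{\operatorname*{II}}=S_\zeta(L_\Omega\psi,0)$ and $u_{H^2}^{\operatorname*{II}}=N_\zeta(H_\Omega\psi)$, and identify $\widetilde g=-(\partial_n u_{H^2}^{\operatorname*{II}}+\zeta u_{H^2}^{\operatorname*{II}})$, whose contraction in $\Vert\cdot\Vert_{\Gamma,|\zeta|}$ is obtained exactly as you indicate. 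One detail is superfluous: the paper never needs $\Vert\psi\Vert_{H^2}$ or the smallness of $H_\Omega\psi$ relative to $\psi$; Lemma~\ref{lemma:properties-of-Nk} applied to the $L^2$-bound of $\psi$ already yields $\Vert u_{H^2}^{\operatorname*{II}}\Vert_{|\zeta|}\leq q'|\zeta|^{-1}\Vert\psi\Vert$ and $\Vert u_{H^2}^{\operatorname*{II}}\Vert_{H^2(\Omega)}\lesssim\Vert\psi\Vert$, and the smallness comes solely from the contraction $q'$ built into Corollary~\ref{lemma:properties-of-Snk} in the first step.

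The genuine misstep is your justification of the analyticity bounds for $u_{\mathcal A}^{\operatorname*{II}}$. You invoke Corollary~\ref{Cor:regularity_lf_specialcase}, arguing that the Robin traces occurring here are of the special form $\partial_n\tilde v+\zeta\tilde v$ with $\tilde v$ a Newton potential of a high-frequency source. That situation arises in the companion Lemma~\ref{lemma:domain-contraction}, where the second-stage analytic piece is $S_\zeta\bigl(0,-L_\Gamma^N(\partial_n u_{H^2}^{\operatorname*{I}}+\zeta u_{H^2}^{\operatorname*{I}})\bigr)$ with $u_{H^2}^{\operatorname*{I}}=N_\zeta(H_\Omega f)$. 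In the present lemma the second-stage analytic piece is $S_\zeta(L_\Omega\psi,0)$: its boundary datum vanishes and the size is carried entirely by the volume term $\psi$, so Corollary~\ref{Cor:regularity_lf_specialcase}, which only refines the treatment of boundary data of Newton-potential form, has nothing to act on and cannot produce the boundary weight $\frac{1}{\sqrt{1+\operatorname{Re}\zeta}}\frac{1}{\sqrt{|\zeta|}}$ you promise for every analytic contribution. The paper instead bounds $u_{\mathcal A}^{\operatorname*{II}}$ by applying Lemma~\ref{lemma:bounded-smooth-domain-analyticity} with $f=\psi$ and $g=0$, combined with the residual estimate (\ref{lemma:boundary-contraction-10}) $\Vert\psi\Vert\leq Cq'\Gw{g}$; the ``hard part'' of the weight bookkeeping that you defer is thus settled in the paper by the smallness of $\psi$ entering the volume-datum estimate, not by any sharpened boundary-datum estimate. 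As written, your argument leaves precisely this step unproved, and the tool you name for it is the wrong one; replacing the appeal to Corollary~\ref{Cor:regularity_lf_specialcase} by the appeal to Lemma~\ref{lemma:bounded-smooth-domain-analyticity} with volume datum $\psi$ closes the gap and reproduces the paper's proof.
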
%

\proof
The proof is very similar to that of Lemma~\ref{lemma:domain-contraction}.
Define
\[
u_{\mathcal{A}}^{\operatorname*{I}}:=S_{\zeta}(0,L_{\Gamma}^{N}g)\qquad
\mbox{ and }\qquad u_{H^{2}}^{\operatorname*{I}}:=S_{\zeta}^{\Delta}%
(H_{\Gamma}^{N}g).
\]
Then $u_{\mathcal{A}}^{\operatorname*{I}}$ is analytic and satisfies the
desired analyticity estimates by
Lemma~\ref{lemma:bounded-smooth-domain-analyticity}. For $u_{H^{2}%
}^{\operatorname*{I}}$ we have by Corollary~\ref{lemma:properties-of-Snk}
\begin{align}
\Vert u_{H^{2}}^{\operatorname*{I}}\Vert_{|\zeta|}  &  \leq q^{\prime} \frac{1}{|\zeta|}\Gw{g}
,\label{lemma:boundary-contraction-20}\\
\Vert u_{H^{2}}^{\operatorname*{I}}\Vert_{H^{2}(\Omega)}  &  \lesssim \Gw{g}
\end{align}
where $q^{\prime}\in\left(  0,1\right)  $ is at our disposal and depends on
the parameter $\lambda$ in the definition of $H_{\Gamma}^{N}$ and $L_{\Gamma
}^{N}$. Upon abbreviating $\nu=\operatorname{Re}\zeta$ 
and $k=-\operatorname{Im}\zeta$
the function $u^{\operatorname*{I}}:=u_{\mathcal{A}}^{\operatorname*{I}%
}+u_{H^{2}}^{\operatorname*{I}}$ satisfies
\[
-\Delta u^{\operatorname*{I}}+\zeta u^{\operatorname*{I}}=-2\underbrace
{\left(  k^{2}+\operatorname*{i}\nu k\right)  }_{=\operatorname*{i}k \zeta
}u_{H^{2}}^{\operatorname*{I}},\qquad\partial_{n}u^{\operatorname*{I}}+\zeta
u^{\operatorname*{I}}=g
\]
together with
\begin{equation}
\Vert2\operatorname*{i}k \zeta u_{H^{2}}^{\operatorname*{I}}\Vert\leq
C|\zeta|\Vert u_{H^{2}}^{\operatorname*{I}}\Vert_{|\zeta|}
\stackrel{(\ref{lemma:boundary-contraction-20})}{\leq} Cq^{\prime
}\Vert g\Vert_{\Gamma,|\zeta|}. \label{lemma:boundary-contraction-10}%
\end{equation}
Next, we define $u_{\mathcal{A}}^{\operatorname*{II}}$ and $u_{H^{2}%
}^{\operatorname*{II}}$ by
\[
u_{\mathcal{A}}^{\operatorname*{II}}:=S_{\zeta}\left(  L_{\Omega}\left(
2\left(  k^{2}+\operatorname*{i}\nu k\right)  u_{H^{2}}^{\operatorname*{I}%
}\right)  ,0\right)  \mbox{ and }u_{H^{2}}^{\operatorname*{II}}:=N_{\zeta
}\left(  H_{\Omega}\left(  2\left(  k^{2}+\operatorname*{i}\nu k\right)
u_{H^{2}}^{\operatorname*{I}}\right)  \right)  .
\]
Here, in order to apply the operator $N_{\zeta}$, we extend $H_{\Omega}\left(
2\left(  k^{2}+\operatorname*{i}\nu k\right)  u_{H^{2}}^{\operatorname*{I}%
}\right)  $ by zero outside of $\Omega$. By
Lemma~\ref{lemma:bounded-smooth-domain-analyticity} and
(\ref{lemma:boundary-contraction-10}), we see that $u_{\mathcal{A}%
}^{\operatorname*{II}}$ satisfies the desired analyticity estimates. For the
function $u_{H^{2}}^{\operatorname*{II}}$, we obtain from
Lemma~\ref{lemma:properties-of-Nk}
\begin{align*}
\Vert u_{H^{2}}^{\operatorname*{II}}\Vert_{|\zeta|}  &  \leq q^{\prime}%
|\zeta|^{-1}\Vert 2(k^{2}+\operatorname{i} \nu k) u_{H^{2}}^{\operatorname*{I}}\Vert\leq Cq^{\prime}\Vert
u_{H^{2}}^{\operatorname*{I}}\Vert_{|\zeta|}
\stackrel{(\ref{lemma:boundary-contraction-20})}{\leq} C\left(  {q^{\prime}}\right)
^{2}|\zeta|^{-1}\Vert g\Vert_{\Gamma,|\zeta|},\\
\Vert u_{H^{2}}^{\operatorname*{II}}\Vert_{H^{2}(\Omega)}  &  \leq
C\Vert|\zeta|^{2}u_{H^{2}}^{\operatorname*{I}}\Vert\lesssim |\zeta|\Vert
u_{H^{2}}^{\operatorname*{I}}\Vert_{|\zeta|}
\stackrel{(\ref{lemma:boundary-contraction-20})}{\lesssim} q'\Vert
g\Vert_{\Gamma,|\zeta|}.
\end{align*}
We set $u_{\mathcal{A}}:=u_{\mathcal{A}}^{\operatorname*{I}}+u_{\mathcal{A}%
}^{\operatorname*{II}}$ and $u_{H^{2}}:=u_{H^{2}}^{\operatorname*{I}}%
+u_{H^{2}}^{\operatorname*{II}}$. Then $u_{\mathcal{A}}$ and $u_{H^{2}}$
satisfy the desired estimates and $\widetilde{u}:=u-(u_{\mathcal{A}}+u_{H^{2}%
})$ satisfies
\[
-\Delta\widetilde{u}+\zeta^{2}\widetilde{u}=0,\qquad\partial_{n}\widetilde
{u}+\zeta\widetilde{u}=\widetilde{g}:=-\left(  \partial_{n}u_{H^{2}%
}^{\operatorname*{II}}+\zeta u_{H^{2}}^{\operatorname*{II}}\right)
\]
with%
\begin{align*}
\Vert\widetilde{g}\Vert_{\Gamma,|\zeta|}  
&  \lesssim  
|\zeta|^{3/2} \|u_{H^{2}}\|_{\Gamma} + 
|\zeta|^{1/2} \|\partial_n u_{H^{2}}\|_{\Gamma} + 
|\zeta|\Vert u_{H^{2}%
}^{\operatorname*{II}}\Vert_{H^{1/2}(\Gamma)}+\left\Vert \partial_{n}u_{H^{2}%
}^{\operatorname*{II}}\right\Vert _{H^{1/2}\left(  \Gamma\right)  } \\
&  \leq C^\prime\left(  |\zeta|\Vert u_{H^{2}}^{\operatorname*{II}}\Vert_{|\zeta
|}+\left\Vert u_{H^{2}}^{\operatorname*{II}}\right\Vert _{H^{2}\left(
\Omega\right)  }\right)  \leq C^{\prime\prime}q^{\prime}\Vert g\Vert_{\Gamma,|\zeta|}.
\end{align*}
The result follows by selecting $\lambda$ sufficiently large so that
$q^{\prime}$ is sufficiently small.%
\endproof

\section{Discretization\label{sec:discretization}}

We apply the regularity theory of the previous section to the of $hp$-finite
element method. Let $\tilde{S}_{\zeta}$ be the solution operator of the
adjoint problem: find $z\in V$ such that%
\begin{equation}
a_{\overline{\zeta}}\left(  z,w\right)  =\left(  u,w\right)  \quad\forall w\in
V. \label{PDE:adjoint}%
\end{equation}
Let $S\subset V$ be a closed subspace and define the adjoint approximability
\[
\eta(S):=\sup_{f\in L^{2}(\Omega)\backslash\left\{  0\right\}  }\inf_{v\in
S}\frac{\Vert\tilde{S}_{\zeta}f-v\Vert_{|\zeta|}}{\Vert f\Vert}.
\]

\subsection{Discrete Inf-Sup Constant $\gamma_{\operatorname*{disc}}$ and
Quasi-Optimality\label{subsec:quasiopt}}

For $\operatorname{Re}\zeta>0$, the existence and uniqueness of the Galerkin
solution follows from Lemma~\ref{Lem:infsup1}. If $\zeta=-\operatorname{i}k$
is purely imaginary, well-posedness and quasi-optimality of the Galerkin
discretization are shown in \cite{mm_stas_helm2} under the restriction that
\[
\left\vert k\right\vert \eta(S)\leq\frac{1}{4(1+C_{b})},
\]
where $C_{b}$ is the constant appearing in \eqref{eq:contRobin}. In the next
theorem, we derive an estimate of the discrete inf-sup constant for general
$\zeta\in\mathbb{C}_{\geq0}^{\circ}$.

\begin{theorem}
\label{Th:discrinfsup} For $\zeta\in\mathbb{C}_{\geq0}$ 
let the sesquilinear form $a_\zeta$ be given by \eqref{varformrobin}.  
Then the discrete inf-sup constant
\[
\gamma_{\operatorname*{disc}}:=\inf_{u\in S\backslash\left\{  0\right\}  }%
\sup_{v\in S\backslash\left\{  0\right\}  }\frac{\left\vert a_{\zeta}\left(
u,v\right)  \right\vert }{\left\Vert u\right\Vert _{\left\vert \zeta
\right\vert }\left\Vert v\right\Vert _{|\zeta|}}%
\]
satisfies the following: 
\begin{enumerate}
\item If $\operatorname*{Re}\zeta>0$, then
\[
\gamma_{\operatorname{disc}}\geq\frac{\operatorname{Re}\zeta}{|\zeta|}.
\]

\item If $\zeta\in\mathbb{C}_{\geq0}^{\circ}$ and $\frac{(\operatorname*{Im}%
\zeta)^{2}}{|\zeta|}\eta(S)\leq\frac{1}{4(1+C_{b})}$ then,%
\begin{equation}
\gamma_{\operatorname{disc}}\geq c\frac{1+\operatorname{Re}\zeta}{\left\vert
\zeta\right\vert }, \label{gammdisc2}%
\end{equation}
for a constant $c$ independent of $\zeta$.
\end{enumerate}
\end{theorem}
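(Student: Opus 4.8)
The plan is to treat the two cases separately, with the first being essentially immediate. For part (1), when $\operatorname{Re}\zeta>0$, the argument of Lemma~\ref{Lem:infsup1} applies verbatim to the discrete space: given $u\in S$, the test function $v=\tfrac{\zeta}{|\zeta|}u$ lies in $S$ as well, and the same computation shows $\operatorname{Re}a_\zeta(u,\tfrac{\zeta}{|\zeta|}u)\geq\tfrac{\operatorname{Re}\zeta}{|\zeta|}\|u\|_{|\zeta|}^2$, so $\gamma_{\operatorname{disc}}\geq\operatorname{Re}\zeta/|\zeta|$. This needs no resolution condition because coercivity is inherited by subspaces.

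For part (2), I would follow the standard Schatz-type duality argument as in \cite{mm_stas_helm2}, now made explicit in $\zeta$. Given $u\in S$, write $v=u+z$ where $z$ solves the adjoint problem $a_{\bar\zeta}(z,w)=\alpha^2(u,w)$ for all $w\in V$ with $\alpha^2=|\zeta|^2-\bar\zeta^2=-2k\operatorname{i}\bar\zeta$, exactly as in the proof of Theorem~\ref{thm:alternative-inf-sup}; this gives, for the continuous test function, $\operatorname{Re}a_\zeta(u,u+z)\geq\|u\|_{|\zeta|}^2$. The discrete test function is $v_h=u+z_h$ where $z_h\in S$ is the Galerkin projection (with respect to $a_{\bar\zeta}$) — or, more simply, any $z_h\in S$ realizing the approximation in the definition of $\eta(S)$ applied to the right-hand side $\alpha^2 u$. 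Then
\[
a_\zeta(u,u+z_h)=a_\zeta(u,u+z)+a_\zeta(u,z_h-z)=a_\zeta(u,u+z)+\overline{a_{\bar\zeta}(z_h-z,u)}\,,
\]
but $a_{\bar\zeta}(z-z_h,u)$ need not vanish unless $z_h$ is the Galerkin projection; using the Galerkin projection and Galerkin orthogonality $a_{\bar\zeta}(z-z_h,v_h)=0$ for $v_h\in S$, one gets instead the cleaner identity $a_\zeta(u,z_h-z)=(\text{terms controlled by }\|z-z_h\|_{|\zeta|}\|u\|_{|\zeta|})$ via continuity \eqref{eq:contRobin}. The key quantitative input is the bound $\|z\|_{|\zeta|}\leq C_S|\alpha|^2\|u\|=2C_S|k\zeta|\,\|u\|\leq 2C_S|k|\,\|u\|_{|\zeta|}$ from Lemma~\ref{LemWellPosRobin}, together with $\|z-z_h\|_{|\zeta|}\leq \eta(S)\,\|\alpha^2 u\| = \eta(S)|\alpha|^2\|u\|\leq 2\eta(S)\tfrac{(\operatorname{Im}\zeta)^2}{|\zeta|}\,|\zeta|\,\|u\|$, which under the hypothesis $\tfrac{(\operatorname{Im}\zeta)^2}{|\zeta|}\eta(S)\leq\tfrac{1}{4(1+C_b)}$ is bounded by $\tfrac{1}{2(1+C_b)}|\zeta|\|u\|\leq\tfrac{1}{2(1+C_b)}\|u\|_{|\zeta|}$.

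Assembling: $|a_\zeta(u,u+z_h)|\geq \operatorname{Re}a_\zeta(u,u+z)-(1+C_b)\|u\|_{|\zeta|}\|z-z_h\|_{|\zeta|}\geq \|u\|_{|\zeta|}^2-\tfrac12\|u\|_{|\zeta|}^2=\tfrac12\|u\|_{|\zeta|}^2$, while $\|u+z_h\|_{|\zeta|}\leq \|u\|_{|\zeta|}+\|z\|_{|\zeta|}+\|z-z_h\|_{|\zeta|}\leq(1+2C_S|k|+C)\|u\|_{|\zeta|}$. Hence $\gamma_{\operatorname{disc}}\gtrsim \tfrac{1}{1+|k|}$; combining this with the bound $\gamma_{\operatorname{disc}}\geq\operatorname{Re}\zeta/|\zeta|$ from part (1) when $\operatorname{Re}\zeta$ is not small, and noting $1+|k|\lesssim |\zeta|/(1+\operatorname{Re}\zeta)$ in the regime where the duality bound is the binding one, yields \eqref{gammdisc2} after taking the maximum of the two lower bounds. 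The main obstacle is bookkeeping: one must carefully split into the sectorial regime (where part (1) already gives $\gamma_{\operatorname{disc}}\geq(1+\operatorname{Re}\zeta)/(c|\zeta|)$ directly since $\operatorname{Re}\zeta\simeq|\zeta|$) and the non-sectorial regime $|\operatorname{Im}\zeta|\gtrsim|\zeta|$ (where $1+|k|\simeq|\zeta|\gtrsim|\zeta|/(1+\operatorname{Re}\zeta)$ only if $\operatorname{Re}\zeta=O(1)$, which is automatic once $\operatorname{Im}\zeta$ dominates and we are forced to have $\operatorname{Re}\zeta$ small relative to $|\zeta|$ — here one uses $\operatorname{Re}\zeta\leq|\zeta|$ and the hypothesis linking $\eta(S)$ to $(\operatorname{Im}\zeta)^2/|\zeta|$), and verify that the constant $c$ emerging from $\max$ of the two estimates is genuinely $\zeta$-independent.
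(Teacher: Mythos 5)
Your part (1) and the overall Schatz-type strategy for part (2) are the same as the paper's, but the quantitative core of your part (2) has a genuine gap. You take the adjoint datum $\alpha^{2}u$ with $\alpha^{2}=|\zeta|^{2}-\overline{\zeta}^{2}=-2k\operatorname{i}\overline{\zeta}$ (as in the continuous Theorem~\ref{thm:alternative-inf-sup}) and then claim $\eta(S)\,|\alpha|^{2}\|u\|\leq 2\eta(S)\frac{(\operatorname{Im}\zeta)^{2}}{|\zeta|}|\zeta|\,\|u\|$. Since $|\alpha^{2}|=2|\operatorname{Im}\zeta|\,|\zeta|\geq 2(\operatorname{Im}\zeta)^{2}$, this inequality is false whenever $\operatorname{Re}\zeta>0$, and under the stated hypothesis $\frac{(\operatorname{Im}\zeta)^{2}}{|\zeta|}\eta(S)\leq\frac{1}{4(1+C_{b})}$ the perturbation term $(1+C_{b})\|z-z_{h}\|_{|\zeta|}\|u\|_{|\zeta|}$ cannot be absorbed for general $\zeta\in\mathbb{C}_{\geq0}^{\circ}$: what your argument really needs is $|\operatorname{Im}\zeta|\,\eta(S)\lesssim 1$, which the hypothesis only yields (up to a constant) when $|\operatorname{Im}\zeta|\gtrsim|\zeta|$. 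Your closing remarks gesture at exactly this case split, but the repair is not carried out: the sector threshold must satisfy $|\operatorname{Im}\zeta|>c|\zeta|$ with $c>1/2$ for the absorbed coefficient to stay positive, the resulting lower bound is only $\gtrsim 1/(1+|\operatorname{Im}\zeta|)$ and must then be merged with part (1) on the complementary sector, and the constants of this merge are never checked. (The digression about the Galerkin projection is also a dead end: its quasi-optimality presupposes the discrete inf-sup constant you are proving; the best-approximation-plus-continuity route you end up using is the right one.)

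The paper avoids all of this by a different choice of adjoint datum and a sharper stability bound: it sets $z=2k^{2}\tilde{S}_{\zeta}(u)$, i.e.\ datum $2(\operatorname{Im}\zeta)^{2}u$ (the real part of $\alpha^{2}$ only), which still gives $\operatorname{Re}a_{\zeta}(u,u+z)\geq\|u\|_{|\zeta|}^{2}$ but has $L^{2}$-norm exactly $2k^{2}\|u\|$, so the best-approximation error is $2k^{2}\eta(S)\|u\|\leq 2\frac{k^{2}}{|\zeta|}\eta(S)\|u\|_{|\zeta|}$ and the resolution condition applies verbatim for every $\zeta\in\mathbb{C}_{\geq0}^{\circ}$, with no sector split. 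Moreover, instead of your crude bound $\|z\|_{|\zeta|}\leq 2C_{S}|k|\,\|u\|_{|\zeta|}$, the paper invokes the $(1+\operatorname{Re}\zeta)^{-1}$-weighted stability of Lemma~\ref{LemWellPosRobin}, giving $\|z\|_{|\zeta|}\leq \frac{2C_{S}k^{2}}{(1+\operatorname{Re}\zeta)|\zeta|}\|u\|_{|\zeta|}$; this is what produces the factor $\frac{1+\operatorname{Re}\zeta}{|\zeta|}$ in \eqref{gammdisc2} directly from a single computation, without ever falling back on part (1). If you adopt these two modifications, your argument becomes the paper's proof; as written, the displayed inequality is wrong and the patch via the maximum of two bounds remains a sketch.
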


\begin{remark}
The resolution condition (\ref{gammdisc2}) is not an artifact of the theory:
in \cite[Ex.~{3.7}]{MPS13}, a domain $\Omega$, a finite element space $S$, and a
purely imaginary wave number $\zeta=-\operatorname*{i}k$ are presented where the
Galerkin discretization leads to a system matrix that is not invertible.
\end{remark}

\textbf{Proof of Theorem~\ref{Th:discrinfsup}. }Let $\zeta=\nu
-\operatorname*{i}k$. The first statement follows directly from the continuous
inf-sup constant in Lemma~\ref{Lem:infsup1}. We prove the second statement.
Let $u\in S$ and choose $v=u+z$, where $z=2k^{2}\tilde{S}_{\zeta}\left(
u\right)  $. Then it is simple to check that
\[
\operatorname{Re}a(u,u+z)\geq\Vert u\Vert_{|\zeta|}^{2}.
\]
Let $z_{S}\in V$ be the best approximation of $z$ with respect to the
$\left\Vert \cdot\right\Vert _{\left\vert \zeta\right\vert }$ norm. Then
\begin{align*}
\operatorname{Re}a(u,u+z_{S})  &  =\operatorname{Re}a(u,u+z)+\operatorname{Re}%
a(u,z_{S}-z)\\
&  \geq\Vert u\Vert_{|\zeta|}^{2}-(1+C_{b})\Vert u\Vert_{|\zeta|}\Vert
z-z_{S}\Vert\\
&  \geq\Vert u\Vert_{|\zeta|}^{2}-2k^{2}(1+C_{b})\eta(S)\Vert u\Vert_{|\zeta
|}\Vert u\Vert\\
&  \geq\left(  1-2\frac{k^{2}}{|\zeta|}(1+C_{b})\eta(S)\right)  \Vert
u\Vert_{|\zeta|}^{2}\\
&  \geq\frac{1}{2}\Vert u\Vert_{|\zeta|}^{2}.
\end{align*}
Moreover%
\begin{align*}
\left\Vert u+z_{S}\right\Vert _{\left\vert \zeta\right\vert }  &  \leq\Vert
u\Vert_{\left\vert \zeta\right\vert }+\Vert z-z_{S}\Vert_{\left\vert
\zeta\right\vert }+\Vert z\Vert_{|\zeta|}\\
&  \leq\left(  1+\frac{1}{2(1+C_{b})}+C_{S}\frac{2k^{2}}{\left(  1+\nu\right)
\left\vert \zeta\right\vert }\right)  \left\Vert u\right\Vert _{\left\vert
\zeta\right\vert }%
\end{align*}
and, in turn, we have proved%
\begin{equation}
\gamma_{\text{$\operatorname*{disc}$}}\geq\frac{\operatorname{Re}a(u,u+z_{S}%
)}{\Vert u\Vert_{|\zeta|}\Vert u+z_{S}\Vert_{|\zeta|}}\geq\frac{2}{2+\frac
{1}{1+C_{b}}+\frac{4k}{|\zeta|}\frac{k}{\nu+1}C_{S}}. \label{gammdiscfinal}%
\end{equation}
A simple calculation shows that there exists a constant $c>0$ independent of
$\zeta\in\mathbb{C}_{\geq0}^{\circ}$ such that the right-hand side in
(\ref{gammdiscfinal}) is bounded from below by the right-hand side in
(\ref{gammdisc2}).%
\endproof

\begin{theorem}
\label{th:quasioptnupositiv} Assume that $\operatorname*{Re}\zeta>0$. Then the
Galerkin method based on $S$ is quasi-optimal, i.e., for every $u\in
V$ there exists a unique $u_{S}\in S$ with $a(u-u_{S},v)-b(u-u_{S}%
,v)=0$ for all $v\in S$, and
\begin{align}
\Vert u-u_{S}\Vert_{|\zeta|}  &  \leq\frac{|\zeta|}{\operatorname*{Re}(\zeta)
}(1+C_{b})\inf_{v\in S}\Vert u-v\Vert_{|\zeta|}.\label{eq:quasiopt3}\\
\Vert u-u_{S}\Vert_{L^{2}(\Omega)}  &  \leq(1+C_{b})\eta(S)\Vert u-u_{S}%
\Vert_{|\zeta|}. \label{eq:quasiopt4}%
\end{align}

\end{theorem}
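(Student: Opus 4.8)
The plan is to combine the discrete inf--sup bound of Theorem~\ref{Th:discrinfsup}(1) with the continuity estimate \eqref{eq:contRobin} in the standard way to get the energy-norm estimate, and then to run an Aubin--Nitsche duality argument for the $L^{2}$-estimate.

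\textbf{Step 1 (well-posedness of the Galerkin problem).} Since $\operatorname{Re}\zeta>0$, Theorem~\ref{Th:discrinfsup}(1) gives $\gamma_{\operatorname{disc}}\geq\operatorname{Re}\zeta/|\zeta|>0$; concretely, the argument of Lemma~\ref{Lem:infsup1} applied with the admissible test function $v=(\zeta/|\zeta|)u\in S$ shows $\sup_{w\in S\setminus\{0\}}|a_{\zeta}(u,w)|/\|w\|_{|\zeta|}\geq(\operatorname{Re}\zeta/|\zeta|)\,\|u\|_{|\zeta|}$ for all $u\in S$. Because $a_{\overline{\zeta}}(w,u)=\overline{a_{\zeta}(u,w)}$, the inf--sup constant over test functions equals the one over trial functions, so the discrete problem has a unique solution $u_{S}\in S$, characterized by $a_{\zeta}(u_{S},v)=a_{\zeta}(u,v)$ for all $v\in S$, i.e. by the Galerkin orthogonality $a_{\zeta}(u-u_{S},v)=0$ for all $v\in S$.

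\textbf{Step 2 (quasi-optimality in $\|\cdot\|_{|\zeta|}$).} The map $P\colon V\to S$, $Pu:=u_{S}$, is a projection onto $S$, and from Step~1 together with \eqref{eq:contRobin} one gets $\gamma_{\operatorname{disc}}\|Pu\|_{|\zeta|}\leq\sup_{w\in S}|a_{\zeta}(Pu,w)|/\|w\|_{|\zeta|}=\sup_{w\in S}|a_{\zeta}(u,w)|/\|w\|_{|\zeta|}\leq(1+C_{b})\|u\|_{|\zeta|}$, hence $\|P\|_{|\zeta|\to|\zeta|}\leq(1+C_{b})|\zeta|/\operatorname{Re}\zeta$. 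Using the Hilbert-space identity $\|I-P\|=\|P\|$ for any nontrivial projection (the cases $S=\{0\}$ and $S=V$ being immediate) and $(I-P)v=0$ for $v\in S$, we obtain $\|u-u_{S}\|_{|\zeta|}=\|(I-P)(u-v)\|_{|\zeta|}\leq\|P\|\,\|u-v\|_{|\zeta|}$ for every $v\in S$; taking the infimum over $v\in S$ yields \eqref{eq:quasiopt3}. (A plain triangle-inequality Céa argument gives the same bound up to a harmless additive constant.)

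\textbf{Step 3 ($L^{2}$-estimate by duality).} Put $e:=u-u_{S}$ and let $z:=\tilde{S}_{\zeta}e\in V$ solve the adjoint problem $a_{\overline{\zeta}}(z,w)=(e,w)$ for all $w\in V$; this is well-posed by Lemma~\ref{Lem:infsup1} since $\operatorname{Re}\overline{\zeta}=\operatorname{Re}\zeta>0$. Taking $w=e$ and using $a_{\overline{\zeta}}(z,e)=\overline{a_{\zeta}(e,z)}$ together with the fact that $(e,e)=\|e\|_{L^{2}(\Omega)}^{2}$ is real gives $\|e\|_{L^{2}(\Omega)}^{2}=a_{\zeta}(e,z)$. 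By Galerkin orthogonality, $a_{\zeta}(e,z)=a_{\zeta}(e,z-z_{S})$ for every $z_{S}\in S$, so \eqref{eq:contRobin} yields
\[
\|e\|_{L^{2}(\Omega)}^{2}\leq(1+C_{b})\,\|e\|_{|\zeta|}\inf_{z_{S}\in S}\|z-z_{S}\|_{|\zeta|}\leq(1+C_{b})\,\eta(S)\,\|e\|_{|\zeta|}\,\|e\|_{L^{2}(\Omega)},
\]
the last step by the definition of $\eta(S)$ applied to $f=e$. Dividing by $\|e\|_{L^{2}(\Omega)}$ gives \eqref{eq:quasiopt4}. I do not expect a genuine obstacle here: all three steps are routine. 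The only points needing a little care are the conjugation bookkeeping in the identity $\|e\|_{L^{2}(\Omega)}^{2}=a_{\zeta}(e,z)$, and---if one wants exactly the constant $\tfrac{|\zeta|}{\operatorname{Re}\zeta}(1+C_{b})$ rather than $1+\tfrac{|\zeta|}{\operatorname{Re}\zeta}(1+C_{b})$---the use of the projection identity $\|I-P\|=\|P\|$.
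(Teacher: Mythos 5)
Your proposal is correct and takes essentially the same route as the paper: \eqref{eq:quasiopt3} follows from the discrete inf-sup bound $\gamma_{\operatorname{disc}}\geq \operatorname{Re}\zeta/|\zeta|$ of Theorem~\ref{Th:discrinfsup}(1) together with the continuity bound \eqref{eq:contRobin} (your explicit use of the projection identity $\|I-P\|=\|P\|$ simply makes precise how the exact constant $\frac{|\zeta|}{\operatorname{Re}\zeta}(1+C_b)$, rather than $1+\frac{|\zeta|}{\operatorname{Re}\zeta}(1+C_b)$, arises), and your Step~3 is exactly the duality argument with $\psi=\tilde S_{\zeta}e$ that the paper invokes via \eqref{L2zetanormest}.
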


Equation \eqref{eq:quasiopt3} is a direct consequence of the discrete inf-sup
constant proved in Theorem \ref{Th:discrinfsup}. Estimate \eqref{eq:quasiopt4}
follows from the proof of the next theorem (see (\ref{L2zetanormest})). We
note here that for $\zeta\in S_{\beta}$, the ratio $|\zeta|/\operatorname*{Re}%
\zeta$ is bounded from above and no resolution assumption is required. In the
next theorem, we find that under a resolution assumption, the estimate
\eqref{eq:quasiopt3} can be improved, such that it is non-degenerate for
$\operatorname*{Re}\zeta\longrightarrow0$.

\begin{theorem}
\label{th:quasioptresolution} If
\begin{equation}
\operatorname*{Re}\zeta\geq0\qquad\text{ and }\qquad\frac{(\operatorname*{Im}%
\zeta)^{2}}{|\zeta|}\eta(S)\leq\frac{1}{4(1+C_{b})},
\label{eq:resolutionscondquasiopt}%
\end{equation}
then the Galerkin method based on $S$ is quasi-optimal and
\begin{align}
\Vert u-u_{S}\Vert_{|\zeta|}  &  \leq2(1+C_{b})\inf_{v\in S}\Vert
u-v\Vert_{|\zeta|},\label{eq:quasiopt1}\\
\Vert u-u_{S}\Vert_{L^{2}(\Omega)}  &  \leq(1+C_{b})\eta(S)\Vert u-u_{S}%
\Vert_{|\zeta|}. \label{eq:quasiopt2}%
\end{align}

\end{theorem}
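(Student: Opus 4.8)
The plan is to run a Schatz--type duality argument, reusing the adjoint test function that already appeared in the proof of Theorem~\ref{Th:discrinfsup}. Write $\zeta=\nu-\operatorname{i}k$ and $e:=u-u_S$. Under the resolution condition \eqref{eq:resolutionscondquasiopt}, Theorem~\ref{Th:discrinfsup}(2) provides a positive discrete inf-sup constant, which yields existence and uniqueness of the Galerkin solution $u_S\in S$ together with the Galerkin orthogonality $a_\zeta(e,v)=0$ for all $v\in S$. I will also use that the adjoint solution operator $\tilde S_\zeta$ from \eqref{PDE:adjoint} is well defined on $L^2(\Omega)$ for every $\zeta\in\mathbb{C}_{\geq0}^\circ$ (apply Lemma~\ref{LemWellPosRobin} with $\overline\zeta$ in place of $\zeta$ and right-hand side $(e,\cdot)$).

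For the energy estimate \eqref{eq:quasiopt1} I would set $z:=2k^2\tilde S_\zeta(e)$. From $a_{\overline\zeta}(z,w)=2k^2(e,w)$ and conjugate symmetry one obtains $a_\zeta(e,z)=2k^2\|e\|^2$, hence
\[
\operatorname{Re}a_\zeta(e,e+z)=\|\nabla e\|^2+\bigl(\nu^2-k^2+2k^2\bigr)\|e\|^2+\nu\|e\|_\Gamma^2\ge\|e\|_{|\zeta|}^2,
\]
the key point being that the indefinite contribution $\operatorname{Re}(\zeta^2)\|e\|^2=(\nu^2-k^2)\|e\|^2$ is exactly compensated by the $2k^2\|e\|^2$ coming from the adjoint term, while $\operatorname{Re}\zeta\ge0$ leaves only a nonnegative boundary term. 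Next, for arbitrary $w\in S$ and a best $\|\cdot\|_{|\zeta|}$-approximation $z_S\in S$ of $z$, Galerkin orthogonality applied to $v:=w-u_S+z_S\in S$ gives $a_\zeta(e,e+z)=a_\zeta\bigl(e,(u-w)+(z-z_S)\bigr)$, so by continuity \eqref{eq:contRobin}
\[
\|e\|_{|\zeta|}^2\le(1+C_b)\|e\|_{|\zeta|}\bigl(\|u-w\|_{|\zeta|}+\|z-z_S\|_{|\zeta|}\bigr).
\]
Since $\|z-z_S\|_{|\zeta|}=2k^2\inf_{v\in S}\|\tilde S_\zeta(e)-v\|_{|\zeta|}\le 2k^2\eta(S)\|e\|\le\tfrac{2k^2}{|\zeta|}\eta(S)\,\|e\|_{|\zeta|}$, the resolution condition \eqref{eq:resolutionscondquasiopt} bounds this by $\tfrac1{2(1+C_b)}\|e\|_{|\zeta|}$; absorbing that term into the left-hand side and taking the infimum over $w\in S$ yields \eqref{eq:quasiopt1}.

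For the $L^2$ estimate \eqref{eq:quasiopt2} I would carry out the standard Aubin--Nitsche step: with $\psi:=\tilde S_\zeta(e)$ one has $\|e\|^2=a_\zeta(e,\psi)$, and Galerkin orthogonality together with continuity \eqref{eq:contRobin} gives $\|e\|^2\le(1+C_b)\|e\|_{|\zeta|}\inf_{v\in S}\|\psi-v\|_{|\zeta|}\le(1+C_b)\eta(S)\|e\|_{|\zeta|}\|e\|$, which is \eqref{eq:quasiopt2}. I do not anticipate a genuine obstacle: all ingredients are already in place. The only point requiring care is the bookkeeping of constants so that the factor in \eqref{eq:quasiopt1} comes out as exactly $2(1+C_b)$ -- this dictates the choice $z=2k^2\tilde S_\zeta(e)$ and the factor $\tfrac14$ in the resolution condition -- and the observation that a naive Céa estimate through $\gamma_{\operatorname{disc}}\sim(1+\operatorname{Re}\zeta)/|\zeta|$ would degenerate as $\operatorname{Re}\zeta\to0$; the gain comes precisely from estimating $\|z-z_S\|_{|\zeta|}$ through the adjoint approximability $\eta(S)$ rather than through $\|z\|_{|\zeta|}$.
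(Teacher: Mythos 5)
Your proposal is correct and follows essentially the same Schatz-type duality argument as the paper: the Aubin--Nitsche step for \eqref{eq:quasiopt2} is identical, and your treatment of the energy estimate via $z=2k^2\tilde S_\zeta(e)$ and its best approximation $z_S$ produces exactly the same bound $(1+C_b)\,2k^2\eta(S)\Vert e\Vert_{|\zeta|}\Vert e\Vert$ that the paper obtains by inserting its already-proven $L^2$ estimate into the term $2k^2\Vert e\Vert^2$. The only (cosmetic) difference is that you route this term through Galerkin orthogonality and continuity with $z_S$, mirroring the proof of Theorem~\ref{Th:discrinfsup}, rather than citing \eqref{L2zetanormest}; the constants and the resolution condition come out identically.
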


%

\proof
We prove the theorem in the case where $\nu=\operatorname{Re}\zeta\geq0$. Let
$e:=u-u_{S}$ and define $\psi:=\tilde{S}_{\zeta}e$. Let $\psi_{S}$ be the best
approximation to $\psi$ with respect to the $\left\Vert \cdot\right\Vert
_{\left\vert \zeta\right\vert }$ norm. The Galerkin orthogonality implies%
\begin{align*}
\Vert e\Vert^{2}  &  =a_{\zeta}(e,\psi)=a_{\zeta}(e,\psi-\psi_{S})\leq
(1+C_{b})\Vert e\Vert_{|\zeta|}\Vert\psi-\psi_{S}\Vert_{|\zeta|}\\
&  \leq(1+C_{b})\eta(S)\Vert e\Vert_{|\zeta|}\Vert e\Vert.
\end{align*}
This yields
\begin{equation}
\Vert e\Vert\leq(1+C_{b})\eta(S)\Vert e\Vert_{|\zeta|} \label{L2zetanormest}%
\end{equation}
in both cases. Let $k=-\operatorname*{Im}\zeta$. We compute for $v\in S$
\begin{align*}
\Vert e\Vert_{|\zeta|}^{2}  &  \geq\operatorname{Re}\left(  a_{\zeta
}(e,e)+2k^{2}\Vert e\Vert^{2}\right) \\
&  \leq\operatorname{Re}\left(  a_{\zeta}(e,u-v)+2k^{2}\Vert e\Vert^{2}\right)
\\
&  \leq(1+C_{b})\Vert e\Vert_{|\zeta|}\Vert u-v\Vert_{\zeta}+2\frac{k^{2}%
}{|\zeta|}(1+C_{b})\eta(S)\Vert e\Vert_{|\zeta|}^{2},
\end{align*}
which leads to \eqref{eq:quasiopt1} under the condition $\frac{k^{2}}{|\zeta
|}\eta(S)\leq\frac{1}{4(1+C_{b})}$.
\endproof

\subsection{Impact on $hp$-FEM Approximation}
\label{sec:hp-FEM}

We have shown in Sect.~\ref{subsec:quasiopt}, that the Galerkin solution
$u_{S} \in S$ of the Helmholtz problem with Robin boundary conditions
\eqref{eq:robin-bc-smooth-domain} with $\zeta\in S_{\beta}^{c}$ is
quasi-optimal for any closed subspace $S\subset V$, if the adjoint
approximability $\eta(S)$ fulfills the resolution condition
\[
\frac{(\operatorname*{Im}\zeta)^{2}}{|\zeta|}\eta(S)\leq\frac{1}{4(1+C_{b})}.
\]
Let $S_{hp}$ be the $hp$-FEM space described in 
\cite[Sect.~{5}]{MelenkSauterMathComp}. Similarly as 
in \cite{mm_stas_helm2, MelenkSauterMathComp}, one can show 
that the Galerkin method based on $S_{hp}$ is quasi-optimal if
\begin{equation}
\frac{|\zeta|h}{p}\leq C\quad\text{and}\quad p\geq C\log\left(
\operatorname*{e}+\frac{\left\vert \operatorname*{Im}(\zeta)\right\vert
}{1+\operatorname*{Re}(\zeta)}\right).  \label{eq:resolutionass}%
\end{equation}
More specifically, one can prove that there exist constants $C$, $\sigma>0$ that
depend on the shape regularity of the triangulation such that for ever $f\in
L^{2}(\Omega)$ the function $u=\tilde{S}_{|\zeta|}(f)=\overline{S_{|\zeta
|}(\overline{\alpha f},0)}$ satisfies for the regular decomposition 
$u=u_{\mathcal{A}}+u_{\mathcal{H}^{2}}$ given by 
Theorem~\ref{thm:decomposition-bounded-domain} 
\begin{subequations}
\label{eq:hp_err}
\begin{align}
& \frac{|\operatorname*{Im}\zeta|^{2}}{|\zeta|}\inf_{w\in S}\Vert u_{H^{2}%
}-w\Vert_{|\zeta|}    \leq C\frac{|\operatorname*{Im}\zeta|}{|\zeta|}\left(
\frac{|\operatorname*{Im}\zeta|h}{p}+\left(  \frac{|\operatorname*{Im}\zeta
|h}{p}\right)  ^{2}\right)  \Vert f\Vert,\label{eq:hp-errH2}\\
\nonumber 
& \frac{|\operatorname*{Im}\zeta|^{2}}{|\zeta|}\inf_{w\in S}\Vert u_{\mathcal{A}%
}-w\Vert_{|\zeta|}    \leq \\
& \qquad \qquad C\frac{|\operatorname*{Im} \zeta|^2}{|\zeta|} 
\frac{1}{1+\operatorname*{Re}(\zeta)}\left(
\frac{1}{p}+\frac{|\zeta|h}{\sigma p}\right)  
\left(  
\frac{h}{p}
+\left(  \frac{|\zeta|h}{\sigma
p}\right)  ^{p}\right)  \Vert f\Vert, \label{eq:hp_errA}%
\end{align}
\end{subequations}
(see \cite[Sect.~5]{MelenkSauterMathComp}, in particular the proof
of \cite[Thm.~{5.5}]{MelenkSauterMathComp} for details). 
By choosing $h$ and $p$ as in
(\ref{eq:resolutionass}) the right-hand sides in \eqref{eq:hp-errH2} and
\eqref{eq:hp_errA} imply the resolution assumption
\eqref{eq:resolutionscondquasiopt} and therefore the optimal convergence for
the Galerkin solution.

If $\zeta\in S_{\beta}$ no resolution condition is needed for the
quasi-optimality of the problem (cf.~Theorem~\ref{th:quasioptnupositiv}). In
that case, the solution is typically smooth in the domain and exhibits, for
large $\operatorname{Re}\zeta$, a boundary layer. Such problems can be handled 
by suitable meshes capable to resolve the layers such as Shishkin meshes
in the context of the $h$-version of the FEM 
\cite{miller1996fitted, Shi91, MelenkHabil} 
and ``spectral boundary layer meshes''
in the context of the $hp$-FEM, \cite{schwab-suri96,MelenkHabil}.  

\section{Numerical Experiments}
\label{sec:numerics}
We consider the domain \(\Omega = B_1(0)\subset \mathbb{R}^2\) and the equation
\begin{align*}
-\Delta u + \zeta^2 u =1 \qquad &\text{in } \Omega,\\
\partial_n u +  \zeta u = 0 \qquad &\text{on } \Gamma= \partial\Omega. 
\end{align*}
Using Bessel functions and polar coordinates, the solution is given as 
\begin{align*}
u(r) = c_1 J_0(\im \zeta r) + \zeta^{-2}, 
\qquad c_1 = \frac{\im}{\zeta^2}\,\frac{1}{J_1(\im \zeta)-\im J_0(\im\zeta)}.
\end{align*}
We consider values of \(\zeta\) with
\[\zeta = |\zeta| e^{\im\alpha} ,\]
where 
\begin{align*}
\alpha &= \frac{\pi}{2} (1-\widetilde \alpha), 
\qquad 
\widetilde\alpha \in \{0, 2^{-6}, 2^{-4}, 2^{-2}, 2^{-1}, 1\}, \\
|\zeta| &\in \{1,10,50,100\}.
\end{align*}
The purely imaginary wave number corresponds to the choice \(\alpha = \pi/2\) and \(\alpha = 0\) to the real-valued case. 
We consider the $h$-FEM on quasi-uniform meshes for $p \in \{1,2,3,4\}$. 
The results are presented Fig.~\ref{fig:convergenceplots}, where the 
{\bf error} is plotted versus the number of degrees of freedom per wavelength 
\[
N_{|\zeta|} = \frac{2\pi \sqrt{DOF}}{|\zeta|\sqrt{|\Omega|}} = \mathcal{O}\left(\frac{p}{h|\zeta|} \right).
\]
The calculations were carried out within the $hp$-FEM framework 
NgSolve, \cite{schoeberlNGSOLVE,schoeberl97}. 
The following features are visible in Fig.~\ref{fig:convergenceplots}:  
\begin{enumerate}[a)]
\item 
\label{item:a}
A plateau before convergence sets in.
\item 
\label{item:b}
A pollution effect for \(\zeta\) close to the imaginary axis (\(\alpha=\pi/2\)).
That is, asymptotic quasi-optimality sets in for larger $N_{|\zeta|}$
as $|\zeta|$ becomes larger for $\operatorname{Arg} \zeta$ close to $\pi/2$. 
\item 
\label{item:c}
The pollution effect decreases with increasing polynomial degree. 
In particular, the asymptotic behavior is reached for smaller values
of $N_{|\zeta|}$ as $p$ is increased. 
\item 
\label{item:d}
The pollution effect decreases with decreasing angle \(\alpha\).
\end{enumerate}
The observation \ref{item:a} reflects a natural resolution condition
for the problem class under consideration; that is, the best approximation error can only be
expected to be small if $N_{|\zeta|} \sim |\zeta|h/p$ is small. 
The pollution effect observed in \ref{item:b} is well-documented for 
the purely imaginary case $\operatorname{Re} \zeta = 0$. 
Fig.~\ref{fig:convergenceplots} shows that it is present also for 
$\operatorname{Re} \zeta \ne 0$ (and large $\operatorname{Im} \zeta$), 
albeit in a mitigated form. Theorem~\ref{th:quasioptresolution} quantifies 
how this pollution effect is weakened as the ratio 
$\operatorname{Re}\zeta/\operatorname{Im} \zeta$ increases. 
More specifically, 
the resolution condition (\ref{eq:resolutionass}), which results from 
applying Theorem~\ref{th:quasioptresolution} to high order methods, 
illustrates the helpful effect of $\operatorname{Re} \zeta \ne 0$. 
In the limiting case $\operatorname{Im} \zeta = 0$, the Galerkin method
is an energy projection method and even monotone convergence can be expected 
in the energy norm on sequences of nested meshes. 

The observation \ref{item:c} is also well-documented for the purely
imaginary case $\operatorname{Re} \zeta = 0$ and mathematically explained 
in \cite{MelenkSauterMathComp,mm_stas_helm2}. The regularity of the present
work permits to extend the $hp$-FEM analysis of 
\cite{MelenkSauterMathComp,mm_stas_helm2} to the case 
$\operatorname{Re} \zeta \ne 0$ as done in Sect.~\ref{sec:hp-FEM}. 
The observation that the asymptotic convergence regime is reached for
smaller $N_{|\zeta|}$ as $p$ is increased can be understood qualitatively 
from Theorem~\ref{th:quasioptresolution} and the bounds 
(\ref{eq:hp_err}) for $\eta$. Consider, for notational simplicity, 
the case $\operatorname{Re} \zeta = 0$. Then quasi-optimality of the 
$hp$-FEM is reached if 
$$
|\zeta| \eta(S) \lesssim 
\left(1 + \frac{h|\zeta|}{p}\right)
\left( \frac{h|\zeta|}{p} + 
|\zeta| \left(\frac{h|\zeta|}{\sigma p}\right)^p 
\right) 
\stackrel{!}{\lesssim} 1. 
$$
Recalling 
$N_{|\zeta|} = O(h|\zeta|/p)$ allows us to simplify the condition for 
quasi-optimality as 
$$
\frac{1}{N_{|\zeta|}} + 
|\zeta| \left(\frac{1}{\sigma N_{|\zeta|}}\right)^p \stackrel{!}{\lesssim} 1.  
$$
This shows that for larger $p$ quasi-optimality of the $hp$-FEM may be 
expected for small $N_{|\zeta|}$. 

Finally, observation~\ref{item:d} can again be explained by
Theorem~\ref{th:quasioptresolution} since the factor 
$(\operatorname{Im} \zeta)^2/|\zeta|$ is reduced as the ratio 
$\operatorname{Re} \zeta/\operatorname{Im}\zeta$ increases. 

\begin{sidewaysfigure}
\includegraphics[width=1\textwidth]{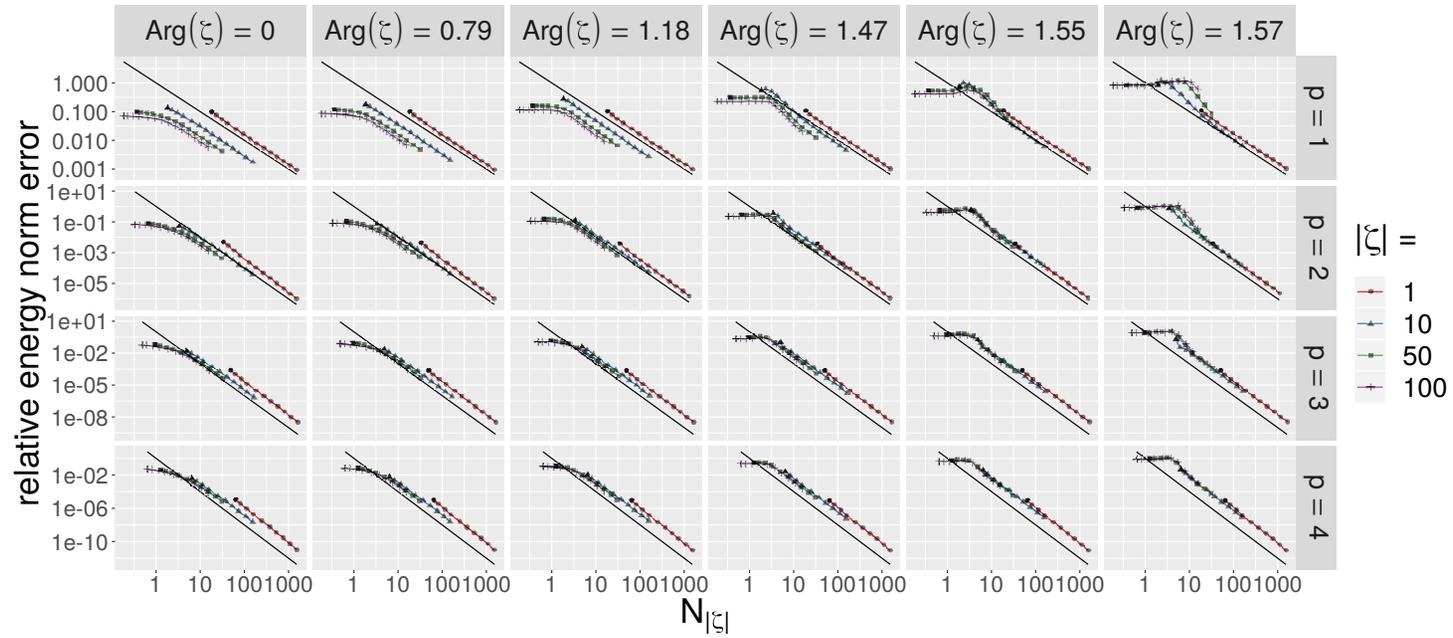}
\caption{Plots with \(H^1\)-seminorm for \(\zeta = |\zeta|\exp(\im \operatorname{Arg}\zeta)\), \(|\zeta| \in \{1,10,50,100\}\), \(\operatorname{Arg}\zeta = \frac{\pi}{2}(1-\widehat{\alpha})\) for \(\widetilde\alpha \in \{0, 2^{-6}, 2^{-4}, 2^{-2}, 2^{-1}, 1\}\), \(p \in \{1,2,3,4\}\) and different \textquotedblleft number of degree of freedom per wavelength\textquotedblright{} \(N_{|\zeta|}\). }
\label{fig:convergenceplots}
\end{sidewaysfigure}

\paragraph*{Acknowledgements:} The authors are grateful to Maximilian Bernkopf (TU Wien) for providing the numerical experiments. JMM 
acknowledges the support of the Austrian Science Fund (FWF) through project W1245. The research was initiated while 
SAS was visiting the Erwin Schr\"odinger Institute during the thematic program 
{\sl Numerical Analysis of Complex PDE Models in the Sciences}. The authors SAS and CT gratefully acknowledge the support by the Swiss National Science Foundation under grant no. 172803.
\bibliographystyle{plain}
\bibliography{complex}

\end{document}